\setlist{labelindent=1pt,itemsep=.5em}
\setlist[itemize]{leftmargin=1.2cm}
\setlist[enumerate]{itemindent=0em,leftmargin=1.2cm}
\setlist[enumerate,1]{label={\upshape(\roman*)}}
\newcommand{\subjclass}[2][2020]{%
  \let\@oldtitle\@title%
  \gdef\@title{\@oldtitle\footnotetext{#1 \emph{Mathematics subject classification}: #2}}%
}
\newcommand{\keywords}[1]{%
  \let\@@oldtitle\@title%
  \gdef\@title{\@@oldtitle\footnotetext{\emph{Keywords}: #1.}}%
}
\newtheorem{thm}{Theorem}[section]
\newtheorem{cor}[thm]{Corollary}
\newtheorem{lem}[thm]{Lemma}
\newtheorem{prop}[thm]{Proposition}
\theoremstyle{definition}
\newtheorem{defn}[thm]{Definition}
\theoremstyle{remark}
\newtheorem{rmk}[thm]{Remark}
\theoremstyle{remark}
\newtheorem{ex}[thm]{Example}
\numberwithin{equation}{section}
\def\a{\alpha}
\def\b{\beta}
\begin{document}

\title{Bimodules and matched pairs of noncommutative BiHom-(pre)-Poisson algebras}
\author{Ismail Laraiedh \thanks{Departement of Mathematics, Faculty of
Sciences, Sfax University, BP 1171, 3000 Sfax, Tunisia. E.mail:
Ismail.laraiedh@gmail.com and Departement of Mathematics, College of
Sciences and Humanities - Kowaiyia, Shaqra University, Kingdom of
Saudi Arabia. E.mail: ismail.laraiedh@su.edu.sa}}

\maketitle
\begin{abstract}
The purpose of this paper is to introduce the notion of noncommutative BiHom-pre-Poisson
algebra. Also we establish the bimodules and matched pairs of noncommutative BiHom-(pre)-Poisson algebras and related relevant properties are also
given.
Finally, we exploit the notion of $\mathcal{O}$-operator to illustrate the relations existing between noncommutative BiHom-Poisson and noncommutative BiHom pre-Poisson algebras.\end{abstract}

{\bf 2010 Mathematics Subject Classification:} 17A20, 17B10, 17B63, 16D20.

{\bf Keywords:} Noncommutative BiHom-(pre)-Poisson algebras, Bimodules, Matched pairs, $\mathcal{O}$-operator.

\section{Introduction}
Algebraic structure appeared in the Physics literature related to string
theory, vertex models in conformal field theory, quantum mechanics and quantum
field theory, such as the q-deformed Heisenberg algebras, q-deformed oscillator algebras, q-deformed Witt, q-deformed Virasoro algebras and related q-deformations
of infinite-dimensional algebras  \cite{HnkHndSilvdcbihomfrobalg:AizawaSaito,HnkHndSilvdcbihomfrobalg:ChaiElinPop,HnkHndSilvdcbihomfrobalg:ChaiIsLukPopPresn,HnkHndSilvdcbihomfrobalg:ChaiKuLuk,HnkHndSilvdcbihomfrobalg:ChaiPopPres,HnkHndSilvdcbihomfrobalg:CurtrZachos1,HnkHndSilvdcbihomfrobalg:DamKu,HnkHndSilvdcbihomfrobalg:DaskaloyannisGendefVir,HnkHndSilvdcbihomfrobalg:Hu,HnkHndSilvdcbihomfrobalg:Kassel92,
HnkHndSilvdcbihomfrobalg:LiuKQuantumCentExt,HnkHndSilvdcbihomfrobalg:LiuKQCharQuantWittAlg,HnkHndSilvdcbihomfrobalg:LiuKQPhDthesis}.

Hom-type algebras satisfy a modified version of the Jacobi identity involving a homomorphism, and were called Hom-Lie algebras by Hartwig, Larsson and Silvestrov in \cite{hls}, \cite{larsson}. Afterwards, Hom-analogues of various classical algebraic structures have been introduced in the literature, such as Hom-associative algebras, Hom-dendriform
algebras, Hom-pre-Lie algebras Hom-(pre)-Poisson algebras etc \cite{Elkadri, AmMakh, attan0, attan1, attan2, Bak1, Makhl1, Makhl2, Makhl3, Makhl4, Yau1, Yau2, Yau3, Yau4, Yau5, Yau6}.

The notion of a noncommutative Poisson algebra was first given by Xu in \cite{Xu}.
A noncommutative Poisson algebra consists of an associative algebra together with
a Lie algebra structure, satisfying the Leibniz identity.
Noncommutative Poisson algebras are used in many fields in mathematics and physics.
Aguiar introduced the notion of a pre-Poisson algebra in \cite{A2} and constructed many examples.
A noncommutative pre-Poisson algebra contains a dendriform algebra and a pre-Lie algebra such that some compatibility conditions are satisfied. More applications of Poisson algebras, pre-Poisson algebra can be found in \cite{CohomologyPA1, CohomologyPA2, Kubo1, Kubo3, Kubo4, Van}

A generalization of this approach led the authors of \cite{GRAZIANI} to introduce BiHom-associative and BiHom-Lie
algebras. In these algebras, the associativity of the multiplication is twisted by two commuting
homomorphisms. When these two homomorphisms are equal, one recovers Hom-associative and Hom-Lie algebras.

Noncommutative BiHom-Poisson algebras was first introduced in \cite{XLi} and
studied in \cite{hadimi, Ismaiiil}. Let $(A,\cdot,\{\cdot,\cdot\},\a_1,\a_2)$ be a noncommutative BiHom-Poisson algebra. A noncommutative BiHom-Poisson $A$-module $V$ is simultaneously a BiHom-associative $A$-module $(\lambda,\b_1,\b_2,V)$ and a BiHom-Lie $A$-module $(\rho,\b_1,\b_2,V)$ satisfying the BiHom-Leibniz conditions:\\ \begin{align*}
&\rho(\alpha_1\a_2(x),\lambda(y,v))= \lambda(\{\a_2(x),y\},\b_2(v))+\lambda(\a_2(y),\rho(\alpha_1(x),v)),\\
&\rho(\mu(\a_2(x),y),\b_2(v))=\lambda(\alpha_1\a_2(x),\rho(y,v))+\lambda(\a_2(y),\rho(\alpha_1(x),v)),\end{align*} for $x,y\in A,~v\in V$, (for more details see Definition 5.3 in\cite{Ismaiiil}). A noncommutative BiHom-pre-Poisson algebra gives rise to a noncommutative BiHom-Poisson algebra
naturally through the sub-adjacent BiHom-associative algebra of the BiHom-dendriform algebra \cite{luimakhlouf} and the
sub-adjacent BiHom-Lie algebra of the BiHom-pre-Lie algebra \cite{LiuMakhloufMeninPanaite}.
We also introduce the notion of $\mathcal{O}$-operators
of noncommutative BiHom-Poisson algebra and  we will prove that given a noncommutative BiHom-Poisson algebra and an $\mathcal{O}$-operator give rise to a noncommutative BiHom-pre-Poisson algebra.
All that is illustrated by the following diagram

$$
\xymatrix{
\mbox{ BiHom-dendriform alg + BiHom-pre-Lie alg }\ar[rr] \ar[d]
                && \mbox{noncomm BiHom-pre-Poisson alg}\ar[d]\\
\ar@<-1ex>[u]\mbox{BiHom-associative alg + BiHom-Lie alg }\ar[rr]
                && \ar@<-1ex>[u]\mbox{noncomm BiHom-Poisson alg}}
$$

The paper is organized as follows. In section 2, we introduce the notions of representation and matched pair of noncommutative BiHom-Poisson algebra with a connection to a representations and matched pairs of BiHom-Lie algebra and
BiHom-associative algebra. In section 3, we establish definition of noncommutative BiHom-pre-Poisson algebra and we give some key of constructions. Their bimodule and matched pair are defined and their related relevant properties
are also given. In section 4,
we study the notion of $\mathcal{O}$-operator and we illustrate the relations existing between
noncommutative BiHom-Poisson and noncommutative BiHom pre-Poisson algebras.

Throughout this paper, all graded vector spaces are assumed to be over a field $\mathbb{K}$ of characteristic different from 2.

\section{Representation and matched pair of noncommutative BiHom-Poisson algebra}\label{sect2}
In this section we recall the definition of noncommutative BiHom-Poisson algebra \cite{XLi}
and we study the representation and the matched pair of noncommutative BiHom-Poisson algebras with a connection to a representations and matched pairs of BiHom-associative algebra and BiHom-Lie algebra.
Moreover we provide some key constructions.
\begin{defn}
A BiHom-module is a triple $(V,\alpha_V,\beta_V)$ consisting of a $\mathbb{K}$-vector space $V$ and
two linear maps $\alpha_V, \beta_V: V\longrightarrow V$  such that $\alpha_V\beta_V=\beta_V\alpha_V.$ A morphism
$f: (V,\alpha_V, \beta_V)\rightarrow (W,\alpha_W,\beta_W)$ of BiHom-modules is a linear map
 $f: V\longrightarrow W$ such that $f\alpha_V=\alpha_W f$ and
 $f\beta_V=\beta_W f.$
\end{defn}
\begin{defn}
A BiHom-algebra is a quadruple $(A,\mu,\alpha_1,\a_2)$ in which $(A,\a_1,\a_2)$ is a BiHom-module, $\mu : A^{\otimes 2} \rightarrow A$ is a bilinear map.
The BiHom-algebra $(A,\mu,\a_1,\a_2)$ is said to be  multiplicative if $\alpha_1\circ\mu=\mu\circ\alpha_{1}^{\otimes 2}$ and $\alpha_2\circ\mu=\mu\circ\alpha_{2}^{\otimes 2}$ (BiHom-multiplicativity).
\end{defn}

Let us recall now the definition and the notion of bimodule of a BiHom-associative given in \cite{GRAZIANI}.
\begin{defn}
A BiHom-associative algebra is a quadruple $(A,\mu,\a_1,\a_2)$
consisting of a vector space $A$ on which the operation $\mu: A\otimes A\rightarrow A$ and $\a_1,\a_2: A\rightarrow A$ are linear maps satisfying
\begin{eqnarray}
&&\alpha_1\circ\a_2=\a_2\circ\alpha_1,\label{aca0}\\
&&\alpha_1\circ\mu(x, y)=\mu(\alpha_1(x),\alpha_1(y)),\label{aca0.0}\\
&&\a_2\circ\mu(x, y)=\mu(\a_2(x),\a_2(y)),\label{aca0.00}\\
&&\mu(\alpha_1(x), \mu(y, z))=\mu(\mu(x, y), \a_2(z)), \label{aca}
\end{eqnarray}
for any $x, y, z \in A $.
\end{defn}
\begin{rmk}
Clearly, a Hom-associative algebra $(A,\mu,\alpha)$ can be regarded as a BiHom-associative
algebra $(A,\mu,\alpha,\alpha)$.
\end{rmk}
\begin{defn}\cite{GRAZIANI}
 Let $(A, \mu, \alpha_1, \a_2)$ be a BiHom-associative algebra. A left $A$-module is a triple $(M, \b_1 , \b_2)$, where $M$ is a linear space, $\b_1$ , $\b_2$ : $M \rightarrow M$ are linear maps, with, in addition,  another linear map: $A\otimes M \rightarrow M , a\otimes m  \mapsto a\cdot m,$ such that, for all $a, a'\in A, m \in M:$
 \begin{eqnarray*}
&&\b_1\circ\b_2= \beta_{2}\circ\b_{1},\
\b_{1}(a\cdot m)= \alpha_{1}(a)\cdot \b_{1}(m),\cr
&&\beta _{2}(a\cdot m)= \a_{2}(a)\cdot \beta_{2}(m),\
\alpha_{1}(a)\cdot(a'\cdot m)= (aa')\cdot\beta_{2}(m).
\end{eqnarray*}
\end{defn}
\begin{defn}
Let $(A, \cdot,\alpha_{1},\alpha_{2})$ be a BiHom-associative algebra, and let $(V, \beta_{1}, \beta_{2})$ be a BiHom-module. Let $ l, r: A \rightarrow gl(V),$ be two linear maps. The quintuple $(l, r, \beta_{1}, \beta_{2}, V)$ is called a bimodule of $A$ if
\begin{eqnarray}
 l(x\cdot y)\beta_{2}(v)&=& l(\alpha_{1}(x))l(y)v,\label{asss1}\\ r(x\cdot y)\beta_{1}(v)&=& r(\alpha_{2}(y))r(x)v,\\
 l(\alpha_{1}(x))r(y)v &=& r(\alpha_{2}(y))l(x)v,\\
\beta_{1}(l(x)v)&=& l(\alpha_{1}(x))\beta_{1}(v),\\ \beta_{1}(r(x)v)&=& r(\alpha_{1}(x))\beta_{1}(v),\\
\beta_{2}(l(x)v) &=& l(\alpha_{2}(x))\beta_{2}(v),\\ \beta_{2}(r(x)v)&=& r(\alpha_{2}(x))\beta_{2}(v),
\end{eqnarray}
for all $ x, y \in  A, v \in V $.
\end{defn}
\begin{prop}\label{ass1}
Let $(l, r, \beta_{1}, \beta_{2}, V)$ be a bimodule of a BiHom-associative algebra $(A, \cdot, \alpha_{1}, \alpha_{2})$. Then, the direct sum $A \oplus V$ of vector spaces is turned into a BiHom-associative algebra  by defining multiplication in $A\oplus V $ by
\begin{eqnarray*}
&&(x_{1} + v_{1}) \cdot' (x_{2} + v_{2}):=x_{1} \cdot x_{2} + (l(x_{1})v_{2} + r(x_{2})v_{1}),\cr
&&(\alpha_{1}\oplus\beta_{1})(x_{1} + v_{1}):=\alpha_{1}(x_{1}) + \beta_{1}(v_{1}),\cr &&(\alpha_{2}\oplus\beta_{2})(x_{1} + v_{1}):=\alpha_{2}(x_{1}) + \beta_{2}(v_{1}),
\end{eqnarray*}
for all $ x_{1}, x_{2} \in  A, v_{1}, v_{2} \in V$.
\end{prop}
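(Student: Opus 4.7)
The plan is to verify directly that the quadruple $(A\oplus V, \cdot', \alpha_1\oplus\beta_1, \alpha_2\oplus\beta_2)$ satisfies the four defining axioms \eqref{aca0}--\eqref{aca} of a BiHom-associative algebra. Throughout, I would evaluate both sides of each identity on generic elements $x_i+v_i$ with $x_i\in A$ and $v_i\in V$, split each result into its $A$-component and $V$-component, and appeal to the axioms of $(A,\cdot,\alpha_1,\alpha_2)$ on the first summand and to the seven bimodule axioms of $(l,r,\beta_1,\beta_2,V)$ on the second.

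Commutativity $(\alpha_1\oplus\beta_1)(\alpha_2\oplus\beta_2)=(\alpha_2\oplus\beta_2)(\alpha_1\oplus\beta_1)$ reduces componentwise to \eqref{aca0} on $A$ and to the hypothesis $\beta_1\beta_2=\beta_2\beta_1$ built into the bimodule data on $V$. For the BiHom-multiplicativity of $\alpha_1\oplus\beta_1$, expanding $(\alpha_1\oplus\beta_1)\bigl((x_1+v_1)\cdot'(x_2+v_2)\bigr)$ produces $\alpha_1(x_1\cdot x_2)+\beta_1(l(x_1)v_2)+\beta_1(r(x_2)v_1)$, and applying \eqref{aca0.0} together with the equivariance relations $\beta_1\circ l(x)=l(\alpha_1(x))\circ\beta_1$ and $\beta_1\circ r(x)=r(\alpha_1(x))\circ\beta_1$ recovers exactly $(\alpha_1(x_1)+\beta_1(v_1))\cdot'(\alpha_1(x_2)+\beta_1(v_2))$. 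The same computation with indices $2$ in place of $1$, using \eqref{aca0.00} and the analogous $\beta_2$-equivariance relations, yields BiHom-multiplicativity of $\alpha_2\oplus\beta_2$.

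The core step is BiHom-associativity of $\cdot'$. I would expand both sides of
$$(\alpha_1(x_1)+\beta_1(v_1))\cdot'\bigl((x_2+v_2)\cdot'(x_3+v_3)\bigr)=\bigl((x_1+v_1)\cdot'(x_2+v_2)\bigr)\cdot'(\alpha_2(x_3)+\beta_2(v_3))$$
and collect the coefficients of $v_1$, $v_2$, $v_3$ and of the purely $A$-valued contribution. The pure $A$-term matches by \eqref{aca}; the $v_3$-coefficient reduces to $l(x_1\cdot x_2)\beta_2(v_3)=l(\alpha_1(x_1))l(x_2)v_3$, i.e.\ axiom \eqref{asss1}; the $v_1$-coefficient reduces to the second bimodule axiom $r(x_2\cdot x_3)\beta_1(v_1)=r(\alpha_2(x_3))r(x_2)v_1$; and the $v_2$-coefficient, which receives a contribution from each side of the bracket, matches precisely by the compatibility axiom $l(\alpha_1(x))r(y)v=r(\alpha_2(y))l(x)v$.

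Everything is a direct unwinding of the definitions, so the main obstacle is not conceptual but bookkeeping: one must keep track of which of $\alpha_1,\alpha_2,\beta_1,\beta_2$ acts on each argument, and invoke each bimodule axiom with the correct index pattern so that the $v_1$-, $v_2$-, and $v_3$-coefficients on the two sides line up. Once the expansion is organised this way, no additional ingredient beyond the hypotheses of the proposition is required.
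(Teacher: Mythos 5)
Your proof is correct: the componentwise expansion is exactly right, and the identification of the $v_1$-, $v_2$-, $v_3$-coefficients with the bimodule axioms $r(x\cdot y)\beta_1(v)=r(\alpha_2(y))r(x)v$, $l(\alpha_1(x))r(y)v=r(\alpha_2(y))l(x)v$ and $l(x\cdot y)\beta_2(v)=l(\alpha_1(x))l(y)v$ is accurate. The paper actually states this proposition without proof, but your direct verification is the same computation it carries out for the more general Theorem \ref{matched ass} (of which this semidirect product is the special case $B=V$ with trivial multiplication and $l_B=r_B=0$), so you have supplied precisely the intended argument.
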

We denote such a BiHom-associative algebra by $(A \oplus V, \cdot', \alpha_{1} + \beta_{1}, \alpha_{2} + \beta_{2}),$
or $A \times_{l, r, \alpha_{1}, \alpha_{2}, \beta_{1}, \beta_{2}} V.$
\begin{thm}\label{matched ass}\cite{double}
Let $(A,\cdot_A,\alpha_1,\alpha_2)$ and $(B,\cdot_B,\beta_1,\beta_2)$ be two BiHom-associative algebras. Suppose that there are linear maps $l_A,r_A:A\rightarrow gl(B)$
and $l_B,r_B:B\rightarrow gl(A)$ such that $(l_A,r_A,\beta_1,\beta_2,B)$ is a bimodule of $A$ and $(l_B,r_B,\alpha_1,\alpha_2,A)$ is a bimodule of $B$ satisfy
  \begin{eqnarray}\label{3}
     l_A(\alpha_1(x))(a\cdot_B b)=l_A(r_B(a)x)\beta_2(b)+(l_A(x)a)\cdot_B\beta_2(b)
                \end{eqnarray}
          \begin{eqnarray}\label{4}
     r_A(\alpha_2(x))(a\cdot_B b)=r_A(l_B(b)x)\beta_1(a)+\beta_1(a)\cdot_B(r_A(x)b)
                \end{eqnarray}
                \begin{eqnarray}\label{5}
     l_A(l_B(a)x)\beta_2(b)+(r_A(x)a)\cdot_B\beta_2(b)-r_A(r_B(b)x)\beta_1(a)-\beta_1(a)\cdot_B(l_A(x)b)=0
                \end{eqnarray}
       \begin{eqnarray}\label{6}
     l_B(\beta_1(a))(x\cdot_A y)=l_B(r_A(x)a)\alpha_2(y)+(l_B(a)x)\cdot_A\alpha_2(y)
                \end{eqnarray}
          \begin{eqnarray}\label{7}
     r_B(\beta_2(a))(x\cdot_A y)=r_B(l_A(y)a)\alpha_1(x)+\alpha_1(x)\cdot_A(r_B(a)y)
                \end{eqnarray}
                \begin{eqnarray}\label{8}
     l_B(l_A(x)a)\alpha_2(y)+(r_B(a)x)\cdot_A\alpha_2(y)-r_B(r_A(y)a)\alpha_1(x)-\alpha_1(x)\cdot_A(l_B(a)y)=0
                \end{eqnarray}
for any, $x,y\in A,~a,b\in B$. Then $(A,B,l_A,r_A,\beta_1,\beta_2,l_B,r_B,\alpha_1,\alpha_2)$ is called a matched pair of
BiHom-associative algebras. In this case, there is a BiHom-associative algebra structure on the direct sum
$A\oplus B$ of the underlying vector spaces of $A$ and $B$ given by
$$\begin{array}{llllll}
(x + a) \cdot (y + b)&:=&x \cdot_A y + (l_A(x)b + r_A(y)a)+a \cdot_B b + (l_B(a)y + r_B(b)x),\cr
(\alpha_{1}\oplus\beta_{1})(x + a)&:=&\alpha_{1}(x) + \beta_{1}(a),\cr (\alpha_{2}\oplus\beta_{2})(x + a)&:=&\alpha_{2}(x) + \beta_{2}(a).
\end{array}$$
\end{thm}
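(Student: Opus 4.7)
The plan is to verify that the quadruple $(A\oplus B,\cdot,\alpha_1\oplus\beta_1,\alpha_2\oplus\beta_2)$ satisfies the three axioms of a BiHom-associative algebra, namely commutation of the two twisting maps \eqref{aca0}, the two multiplicativity conditions \eqref{aca0.0}--\eqref{aca0.00}, and the BiHom-associativity identity \eqref{aca}. Throughout, every element of $A\oplus B$ is written uniquely as $x+a$ with $x\in A$ and $a\in B$, so each verification splits naturally into an $A$-component and a $B$-component that can be checked separately.

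The commutation $(\alpha_1\oplus\beta_1)\circ(\alpha_2\oplus\beta_2)=(\alpha_2\oplus\beta_2)\circ(\alpha_1\oplus\beta_1)$ is immediate from $\alpha_1\alpha_2=\alpha_2\alpha_1$ in $A$ and $\beta_1\beta_2=\beta_2\beta_1$ in $B$. For multiplicativity, one expands $(\alpha_i\oplus\beta_i)\bigl((x+a)\cdot(y+b)\bigr)$; the pure $A$-part and pure $B$-part give the multiplicativity of $\alpha_i$ on $A$ and of $\beta_i$ on $B$, while the crossed pieces such as $\alpha_i\bigl(l_B(a)y\bigr)$ and $\beta_i\bigl(l_A(x)b\bigr)$ reduce to $l_B(\beta_i(a))\alpha_i(y)$ and $l_A(\alpha_i(x))\beta_i(b)$ by the equivariance conditions built into the definition of a bimodule (the analogues of the last four equations in the bimodule definition). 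Similar reductions take care of the $r$-terms.

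The heart of the proof is the BiHom-associativity identity. One computes both
\[
(\alpha_1\oplus\beta_1)(x+a)\cdot\bigl((y+b)\cdot(z+c)\bigr)
\]
and
\[
\bigl((x+a)\cdot(y+b)\bigr)\cdot(\alpha_2\oplus\beta_2)(z+c),
\]
fully expands each product using the definition of $\cdot$, and sorts the resulting terms according to how many of the inputs $x,y,z$ lie in $A$ versus $a,b,c$ lie in $B$. The purely $A$-valued terms coming from three $A$-inputs match via BiHom-associativity of $A$, and symmetrically for $B$. The mixed terms fall into four families, each family being indexed by which two of $\{x,y,z,a,b,c\}$ come from the \emph{opposite} algebra of the output slot. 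Identifying these families correctly, the four cancellations are exactly equations \eqref{3}--\eqref{5} (for terms landing in $B$, one input in $A$) and \eqref{6}--\eqref{8} (for terms landing in $A$, one input in $B$), together with the standard bimodule axioms \eqref{asss1} and its siblings whenever two inputs already lie in the same algebra as the output.

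The main obstacle is purely the bookkeeping: a direct expansion of the associator on $A\oplus B$ produces on the order of twenty-four monomials on each side, and one must keep careful track of which of $\alpha_1,\alpha_2,\beta_1,\beta_2$ is attached to which input so as to recognise precisely one of \eqref{3}--\eqml \eqref{8} in each mixed-type cancellation. Organising the calculation by the type of the output (in $A$ or in $B$) and by the number of $B$-inputs (zero, one, two, or three) makes the six compatibility identities appear in a transparent one-to-one correspondence with the six nontrivial cases, after which the proof is complete.
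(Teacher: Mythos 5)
Your proposal is correct and follows essentially the same route as the paper: a direct expansion of both sides of the BiHom-associativity identity on $A\oplus B$, with the purely $A$-valued and purely $B$-valued terms handled by BiHom-associativity of $A$ and $B$, and the mixed terms cancelled via the bimodule axioms together with the compatibility conditions (\ref{3})--(\ref{8}). The only difference is that you also spell out the routine verifications of multiplicativity and of the commutation of the two twisting maps, which the paper's proof omits.
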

\begin{proof}
For any $x,y,z\in A$ and $a,b,c\in B$ we have
$$\begin{array}{lllllll}
&&(\a_1+\b_1)(x+a)\cdot((y+b)\cdot(z+c))\\&=&(\a_1(x)+\b_1(a))[y\cdot_A z+l_B(b)z+r_B(c)y+b\cdot c+l_A(y)c+r_A(z)b)\\
&=&\a_1(x)\cdot_A(y\cdot_A z)+\a_1(x)\cdot_A l_B(b)z+\a_1(x)\cdot_A r_B(c)y+l_B(\b_1(a))(y\cdot_A z)\\&+&l_B(\b_1(a))l_B(b)z+l_B(\b_1(a))r_B(c)y+r_B(b\cdot_B c)\a_1(x)+r_B(l_A(y)c)\a_1(x)\\&+&r_B(r_A(z)b)\a_1(x)+\b_1(a)\cdot_B(b\cdot_B c)+\b_1(a)\cdot_B l_A(y)c+\b_A(\a_1(x))l_A(y)c\\&+&l_A(\a_1(x))r_A(z)b+r_A(y\cdot_A z)\b_1(a)+r_A(l_A(b)z)\b_1(a)+r_A(r_B(c)y)\b_1(a).
\end{array}$$
In the other hand, we have
$$\begin{array}{lllllll}
&&((x+a)\cdot(y+b))\cdot(\a_2+\b_2)(z+c)\\
&=&(x\cdot_A y+l_B(a)y+r_B(b)x+a\cdot_B b+l_A(x)b+r_A(y)a)\cdot(\a_2(z)+\b_2(c))\\
&=&(x\cdot_A y)\cdot_A\a_2(z)+l_B(a)y\cdot_A\a_2(z)+r_B(b)x\cdot_A\a_2(z)+l_B(a\cdot_B b)\a_2(z)\\&+&l_B(l_A(x)b)\a_2(z)+l_B(r_A(y)a)\a_2(z)+r_B(\b_2(c))(x\cdot_A y)+r_A(\b_2(c))l_B(a)y\\&+&r_B(\b_2(c))r_B(b)x+(a\cdot_B b)\cdot_B\b_2(c)+(l_A(x)b)\cdot_B\b_2(c)+(r_A(y)a)\cdot_B\b_2(c)\\&+&r_A(\a_2(z))(a\cdot_B b)+r_A(\a_2(z))(l_A(x)b)+r_A(\a_2(z))(r_A(y)a)+l_A(x\cdot_A y)\b_2(c)\\&+&l_A(l_B(a)y)\b_2(c)+(r_B(b)x)\b_2(c).
\end{array}$$
Then by (\ref{aca}) and (\ref{3})-(\ref{8}), we deduce that $(\a_1+\b_1)(x+a)\cdot((y+b)\cdot(z+c))=((x+a)\cdot(y+b))\cdot(\a_2+\b_2)(z+c)$. This finishes the proof.
\end{proof}
We denote this BiHom-associative algebra by $A\bowtie^{l_A,r_A,\beta_1,\beta_2}_{l_B,r_B,\alpha_1,\alpha_2}B$.\\

Let us recall now the definition and the notion of bimodule of a BiHom-Lie algebra given in \cite{GRAZIANI}

\begin{defn}
A BiHom-Lie algebra is a quadruple $(A,[\cdot,\cdot],\alpha_1,\alpha_2)$ consisting
of a linear space $A$, a bilinear map
$[\cdot,\cdot]:\wedge^{2}A\rightarrow A$ and two linear maps
$\alpha_1,\alpha_2:A\rightarrow A$ satisfying $\alpha_1\circ\alpha_2=\alpha_2\circ\alpha_1$ and the following conditions, $\forall x, y, z\in A$,
\begin{enumerate}
\item
$\alpha_1([x,y])=[\alpha_1(x),\alpha_1(y)]$ and $\alpha_2([x,y])=[\alpha_2(x),\alpha_2(y)],$
\item
$[\alpha_2(x),\alpha_1(y)]=-[\alpha_2(y),\alpha_1(x)].$
\item
$[\alpha_2^{2}(x),[\alpha_2(y),\alpha_1(z)]]+[\alpha_2^{2}(z),[\alpha_2(x),\alpha_1(y)]]+[\alpha_2^{2}(y),[\alpha_2(z),\alpha_1(x)]]=0.$
\end{enumerate}
\end{defn}
\begin{defn}
Let $(A,[\cdot,\cdot],\alpha_1,\alpha_2)$ be a BiHom-Lie algebra and $(V,\beta_1,\beta_2)$ be a BiHom-module. Let $\rho:A\rightarrow gl(V)$ be a linear map. The quadruple $(\rho,\beta_1,\beta_2,,V)$ is called a representation of $A$ if for all $x,y\in A,~v\in V$, we have
\begin{eqnarray}\rho([\alpha_2(x),y])\beta_2(v)&=&\rho(\alpha_1\alpha_2(x))\circ\rho(y)v-\rho(\alpha_2(y))\circ\rho(\alpha_1(x))v,\label{repLie1}\\
\b_1(\rho(x)v)&=&\rho(\a_1(x))\b_1(v),\label{repLie2}\\
\b_2(\rho(x)v)&=&\rho(\a_2(x))\b_2(v)\label{repLie3}.
\end{eqnarray}
\end{defn}
\begin{prop}\label{pro11}
Let $(\rho, \beta_1,\beta_2, V)$ be a representation of a BiHom-Lie algebra
$(A, [\cdot,\cdot], \alpha_1,\alpha_2)$ such that $\a_1,~\b_2$ are bijectives. Then, the direct sum $A \oplus V$ of vector
spaces is turned into a BiHom-Lie algebra by defining the
multiplication in $A\oplus V $ by
$$\begin{array}{llllll}
[x_1+v_1,x_2+v_2]_\rho&=&[x_1,x_2]+\rho(x_1)v_2-\rho(\alpha_{1}^{-1}\alpha_2(x_2))\beta_1\beta_{2}^{-1}(v_1),\\
(\alpha_1\oplus\beta_1)(x_1+v_1)&=&\alpha_1(x_1)+\beta_1(v_1),\\
(\alpha_2\oplus\beta_2)(x_1+v_1)&=&\alpha_2(x_1)+\beta_2(v_1),
\end{array}$$
for all $ x_{1}, x_{2} \in  A, v_{1}, v_{2} \in V$.
\end{prop}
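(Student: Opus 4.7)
The plan is to verify the three axioms of a BiHom-Lie algebra for the quadruple $(A\oplus V,[\cdot,\cdot]_\rho,\alpha_1\oplus\beta_1,\alpha_2\oplus\beta_2)$: commutativity of the twisting maps and their multiplicativity with respect to the bracket, BiHom-skewsymmetry, and the BiHom-Jacobi identity. The bijectivity of $\alpha_1$ and $\beta_2$ is used only to make sense of the expression $\rho(\alpha_1^{-1}\alpha_2(x_2))\beta_1\beta_2^{-1}(v_1)$ occurring in the bracket.

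The commutativity of $\alpha_1\oplus\beta_1$ and $\alpha_2\oplus\beta_2$ is immediate from the commutativity of $\alpha_1,\alpha_2$ and of $\beta_1,\beta_2$. For multiplicativity, I would project the bracket onto its $A$-component and its $V$-component: the $A$-part is covered by multiplicativity of $\alpha_i$ in $A$, while on the $V$-part I would use \eqref{repLie2}--\eqref{repLie3} to pass $\beta_i$ through $\rho$, together with the fact that $\alpha_1,\alpha_2$ commute (so $\alpha_1$ commutes with $\alpha_1^{-1}\alpha_2$) and similarly for $\beta_1,\beta_2$. BiHom-skewsymmetry is handled in the same fashion: the $A$-part uses BiHom-skewsymmetry in $A$, while the $V$-part reduces, after applying \eqref{repLie2}--\eqref{repLie3} to extract $\beta_2$, to the identity $\rho(\alpha_2(x_1))\beta_2(v_2)-\rho(\alpha_2(x_1))\beta_2(v_2)=0$, with the twisting factor $\alpha_1^{-1}\alpha_2$ in the defining formula designed precisely to produce this cancellation.

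The main content is the BiHom-Jacobi identity. My plan is to expand the cyclic sum
\[
[\alpha_2^2(u_1)\oplus \beta_2^2(w_1),[\alpha_2(u_2)\oplus\beta_2(w_2),\alpha_1(u_3)\oplus\beta_1(w_3)]_\rho]_\rho+\text{cyclic},
\]
and to separate terms by type. The purely $A$-valued contributions vanish by BiHom-Jacobi in $A$. The $V$-valued terms fall into two families: those carrying a single $\rho$ (which come from the outer bracket acting on the inner $A$-part, or from the outer bracket contributing its $v$), and those carrying a composition $\rho(\cdot)\rho(\cdot)$. The single-$\rho$ terms I would collect, normalize with \eqref{repLie2}--\eqref{repLie3}, and then cancel in cyclic sum via BiHom-skewsymmetry of $A$. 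The double-$\rho$ terms I would rewrite, again using \eqref{repLie2}--\eqref{repLie3} to move all $\beta_i$ to the outside, so that each summand takes the canonical shape $\rho(\alpha_1\alpha_2(x))\rho(y)\beta_2(v)$ or $\rho([\alpha_2(x),y])\beta_2(v)$, and finally apply the representation axiom \eqref{repLie1} three times (once per cyclic pair) to see the sum collapse to zero.

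The main obstacle will be the bookkeeping in the Jacobi step: each bracket insertion produces the twist $\alpha_1^{-1}\alpha_2$ inside a $\rho$ and $\beta_1\beta_2^{-1}$ on the $V$-factor, and these must be shepherded through \eqref{repLie2}--\eqref{repLie3} so that they combine into the exact arguments demanded by \eqref{repLie1}. The role of the invertibility hypothesis on $\alpha_1$ and $\beta_2$ is precisely to let these intermediate identities be manipulated formally; once the reductions are carried out carefully, the representation axiom does the rest and the identity closes up.
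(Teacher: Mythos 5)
Your plan is correct and follows the same direct-verification route the paper uses: the paper in fact states this proposition without proof, but its proof of the more general matched-pair Theorem \ref{matched Lie} is exactly the computation you outline (BiHom-multiplicativity, BiHom-skewsymmetry, then the BiHom-Jacobi identity by expanding the cyclic sum and invoking \eqref{repLie1}--\eqref{repLie3}). One correction to your bookkeeping in the Jacobi step: the terms of the form $\rho([\cdot,\cdot])\beta_1\beta_2(v_i)$ produced when the outer bracket acts on the inner $A$-part do not cancel among themselves by BiHom-skewsymmetry; rather, for each fixed $v_i$ the single such term combines with the two composed terms $\rho(\alpha_2^{2}(x_j))\rho(\alpha_2(x_k))\beta_1(v_i)$ exactly via \eqref{repLie1} (applied with $x=\alpha_1^{-1}\alpha_2(x_j)$, $y=\alpha_2(x_k)$, $v=\beta_1(v_i)$, using only that $\alpha_1^{-1}$ commutes with $\alpha_2$), so \eqref{repLie1} does all of the work in the $V$-component and skewsymmetry of $A$ is needed only for the purely $A$-valued part.
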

We denote such a BiHom-Lie algebra by $(A \oplus V, [\cdot,\cdot]_\rho, \alpha_{1} + \beta_{1}, \alpha_{2} + \beta_{2}),$
or $A \times_{\rho, \alpha_{1}, \alpha_{2}, \beta_{1}, \beta_{2}} V.$\\

Now, we introduce the notion of matched pair of BiHom-Lie algebra
\begin{thm}\label{matched Lie}
Let $(A,\{\cdot,\cdot\}_A,\alpha_1,\alpha_2)$ and $(B,\{\cdot,\cdot\}_{B},\beta_1,\beta_2)$ be two BiHom-Lie algebras. Suppose that there are linear maps $\rho_A:A\rightarrow gl(B)$
and $\rho_B:B\rightarrow gl(A)$ such that $(\rho_A,\beta_1,\beta_2,B)$ is a bimodule of $A$ and $(\rho_B,\alpha_1,\alpha_2,A)$ is a bimodule of $B$ satisfies
\begin{eqnarray}\label{Lie1}
     \rho_B(\beta_1\beta_2(a))\{\alpha_1(x),\alpha_1^{2}(y)\}_A&=&\{\rho_B(\beta_2(a))\alpha_1(x),\alpha_1^{2}\alpha_2(y)\}_A+\{\alpha_1\alpha_2(x),\rho_B(\beta_1(a))\alpha_1^{2}(y)\}_A\nonumber\\&+&\rho_B(\rho_A(\alpha_2^{2}(y))\beta_1(a))\alpha_1^{2}(x)-\rho_B(\rho_A(\alpha_2(x))\beta_1(a))\alpha_2\alpha_1^{2}(y)\nonumber\\&&
                \end{eqnarray}
             \begin{eqnarray}\label{Lie2}
     \rho_A(\alpha_1\alpha_2(x))\{\beta_1(a),\beta_1^{2}(b)\}_B&=&\{\rho_A(\alpha_2(x))\beta_1(a),\beta_1^{2}\beta_2(b)\}_B+\{\beta_1\beta_2(a),\rho_A(\alpha_1(x))\beta_1^{2}(b)\}_B\nonumber\\&+&\rho_A(\rho_B(\beta_2^{2}(b))\alpha_1(x))\beta_1^{2}(a)-\rho_A(\rho_B(\beta_2(a))\alpha_1(x))\beta_2\beta_1^{2}(b)\nonumber\\&&
                \end{eqnarray}
for any, $x,y\in A,~a,b\in B$. Then $(A,B,\rho_A,\beta_1,\beta_2,\rho_B,\alpha_1,\alpha_2)$ is called a matched pair of
BiHom-Lie algebras. Moreover, assume that $(A,\{\cdot,\cdot\}_A,\alpha_1,\alpha_2)$ and $(B,\{\cdot,\cdot\}_{B},\beta_1,\beta_2)$ be two regular BiHom-Lie algebras, then there exists a BiHom-Lie algebra structure on the vector
space $A\oplus B$ of the underlying vector spaces of $A$ and $B$ given by
$$\begin{array}{llllll}
[x+a,y+b]&:=&\{x,y\}_A+\rho_A(x)b-\rho_A(\alpha_{1}^{-1}\alpha_2(y))\beta_1\beta_{2}^{-1}(a)\cr&+&\{a,b\}_B+\rho_B(a)y-\rho_B(\beta_{1}^{-1}\beta_2(b))\alpha_1\alpha_{2}^{-1}(x),\cr
(\alpha_{1}\oplus\beta_{1})(x + a)&:=&\alpha_{1}(x) + \beta_{1}(a),\cr (\alpha_{2}\oplus\beta_{2})(x + a)&:=&\alpha_{2}(x) + \beta_{2}(a).
\end{array}$$
\end{thm}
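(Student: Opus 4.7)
My plan is to mirror the proof of Theorem~\ref{matched ass} (the BiHom-associative case): verify, on $A\oplus B$ equipped with the prescribed bracket, the three axioms defining a BiHom-Lie algebra. The BiHom-multiplicativity of $[\cdot,\cdot]$ and the commutation $(\alpha_1\oplus\beta_1)(\alpha_2\oplus\beta_2)=(\alpha_2\oplus\beta_2)(\alpha_1\oplus\beta_1)$ follow by projecting onto the $A$- and $B$-components and combining the multiplicativity of $\{\cdot,\cdot\}_A$, $\{\cdot,\cdot\}_B$ with the intertwining identities \eqref{repLie2}--\eqref{repLie3} for $\rho_A,\rho_B$. Regularity of the two BiHom-Lie algebras is used throughout so that the twists $\alpha_1^{-1}\alpha_2$ and $\beta_1\beta_2^{-1}$ appearing in the bracket make sense as linear operators.

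Next I would check the BiHom-skew-symmetry
\[
[(\alpha_2\oplus\beta_2)(X),(\alpha_1\oplus\beta_1)(Y)]=-[(\alpha_2\oplus\beta_2)(Y),(\alpha_1\oplus\beta_1)(X)]
\]
for $X=x+a$, $Y=y+b$. On the $A$-component, substituting $\alpha_i(x),\beta_i(a)$ inside the twists $\alpha_1^{-1}\alpha_2,\beta_1\beta_2^{-1}$ and using commutativity of $\alpha_1,\alpha_2,\beta_1,\beta_2$ reduces the two sides, term by term, to the skew-symmetry of $\{\cdot,\cdot\}_A$ together with trivial equalities between cross terms such as $\rho_B(\beta_2(a))\alpha_1(y)$; the $B$-component is handled symmetrically. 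Notice that the matched-pair conditions \eqref{Lie1}--\eqref{Lie2} are not required at this step.

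The main obstacle is the BiHom-Jacobi identity
\[
[(\alpha_2\oplus\beta_2)^2(X),[(\alpha_2\oplus\beta_2)(Y),(\alpha_1\oplus\beta_1)(Z)]]+\text{cyclic in }(X,Y,Z)=0.
\]
Setting $X=x+a$, $Y=y+b$, $Z=z+c$ and expanding, the sum naturally splits into an $A$-valued part and a $B$-valued part. The purely $A$-valued terms containing no $\rho_A,\rho_B$ assemble into the BiHom-Jacobi sum of $\{\cdot,\cdot\}_A$ applied to $x,y,z$, hence vanish; analogously for the purely $B$-valued contribution. What remains are mixed terms containing one or two representation operators. Using \eqref{repLie2}--\eqref{repLie3} to commute $\alpha_i,\beta_i$ through $\rho_A,\rho_B$ and then using regularity to absorb the twists $\alpha_1^{-1}\alpha_2$ and $\beta_1\beta_2^{-1}$, the $A$-valued mixed block with a single $A$-input $x$ and two $B$-inputs $b,c$ collapses into exactly \eqref{Lie1} applied to suitably twisted arguments, while the cyclic partners produce the instances of \eqref{Lie1} obtained by permuting $a,b,c$; the $B$-valued mixed contribution reduces dually to \eqref{Lie2}. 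The bookkeeping is substantial but mechanical: the compatibility conditions \eqref{Lie1}--\eqref{Lie2} are tailored precisely so that each block of mixed terms cancels. Once both projections of the cyclic Jacobi sum vanish, we conclude that $(A\oplus B,[\cdot,\cdot],\alpha_1\oplus\beta_1,\alpha_2\oplus\beta_2)$ is a BiHom-Lie algebra.
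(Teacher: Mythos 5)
Your proposal follows essentially the same route as the paper: a direct verification of BiHom-multiplicativity, BiHom-skew-symmetry (where, as you correctly observe, \eqref{Lie1}--\eqref{Lie2} are not needed), and the BiHom-Jacobi identity, with the mixed terms in the cyclic Jacobi sum cancelling via the representation axioms \eqref{repLie1}--\eqref{repLie3} together with \eqref{Lie1}--\eqref{Lie2}; the paper likewise expands the double bracket and leaves the final cancellation as a direct computation. One small bookkeeping slip: the block that invokes \eqref{Lie1} is the $A$-valued one with \emph{two} $A$-inputs and one $B$-input (the block with one $A$-input and two $B$-inputs cancels already by the representation identity \eqref{repLie1} for $\rho_B$), but this does not affect the validity of the argument.
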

\begin{proof}
First, we prove the BiHom-multiplicativity. For all $x,y\in A,~a,b\in B$, we have
\begin{align*}
    &(\alpha_1+\beta_1)[x+a,y+b]\\
    =&(\alpha_1+\beta_1)(\{x,y\}_A+\rho_A(x)b-\rho_A(\alpha_1^{-1}\alpha_2(y))\beta_1\beta_2^{-1}(a)\\&+\{a,b\}_B+\rho_B(a)y-\rho_B(\beta_1^{-1}\beta_2(b))\a_1\a_2^{-1}(x))\\
    =&\alpha_1(\{x,y\}_A)+\alpha_1\rho_B(a)y-\alpha_1\rho_B(\beta_1^{-1}\beta_2(b))\alpha_1\alpha_2^{-1}(x)\\
    &+\beta_1\rho_A(x)b-\b_1\rho_A(\alpha_1^{-1}\a_2(y))\b_1\b_2^{-1}(a)+\b_1\{a,b\}_B\\
    =&\{a_1(x),\a_1(y)\}_A+\rho_B(\b_1(a))\a_1(y)-\rho_B(\b_1^{-1}\b_2(\b_1(b))\a_1\a_2^{-1}(\a_1(x))\\
    &+\rho_A(\a_1(x))\b_1(b)-\rho_A(\a_1^{-1}\a_2(\a_1(y))\b_1\b_2^{-1}(\b_1(a))+\{b_1(a),\b_1(b)\}_B\\
    =&[\a_1(x)+\b_1(a),\a_1(y)+\b_1(b)]\\
    =&[(\a_1+\b_1)(x+a),(\a_1+\b_1)(y+b)].
\end{align*}
In the same way, $(\alpha_1+\beta_1)[x+a,y+b]=[(\a_1+\b_1)(x+a),\a_1+\b_1)(y+b)].$

Now, we prove the Bihom-skewsymmetry. For all $x,y\in A,~a,b\in B$, we have
\begin{align*}
    &[(\alpha_2+\beta_2)(x+a),(\alpha_1+\b_1)(y+b)]\\
    =&[\a_2(x)+\b_2(a),\a_1(y)+\b_1(b)]\\
    =&\{\a_2(x),\a_1(y)\}_A+\rho_{A}(\a_2(x))\b_1(b)-\rho_A(\a_1^{-1}\a_2(\a_1(y))\b_1\b_2^{-1}(\b_2(a))\\
    &+\{\b_2(a),\b_1(b)\}_B+\rho_B(\b_2(a))\a_1(y)-\rho_B(\b_1^{-1}\b_2(\b_1(b)))\a_1\a_2^{-1}(\a_2(x))\\
    =&-\{a_2(y),\a_1(x)\}_A-\rho_A(\a_2(y))\b_1(a)+\rho_A(\a_1^{-1}\a_2(\a_1(x))\b_1\b_2^{-1}(\b_2(b))-\{\b_2(b),\b_1(a)\}_B\\&-\rho_B(\b_2(b))\a_1(x)+\rho_B(\b_1^{-1}\b_2(\b_1(a)))\a_1\a_2^{-1}(\a_2(y))\\
    =&-[(\a_2+\b_2)(y+b),(\a_1+\b_1)(x+a)].
\end{align*}
Finally, we prove the Bihom-Jacobi identity. For any $x,y,z\in A,~a,b,c\in B$, we have
\begin{align*}
&[(\a_2+\b_2)^{2}(x+a),[(\a_2+\b_2)(y+b),(\a_1+\b_1)(z+c)]]\\
=&[\a_2^{2}(x)+\b_2^{2}(a),[\a_2(y)+\b_2(b),\a_1(z)+\b_1(c)]]\\
=&[\a_2^{2}(x)+\b_2^{2}(a),\{\a_2(y),\a_1(z)\}_1+\rho_A(\a_2(y))\b_1(c)-\rho_A(\a_1^{-1}\a_2(\a_1(z))))\b_1\b_2^{-1}(\b_2(b))\\&+\{\b_2(b),\b_1(c)\}_B+\rho_B(\b_2(b))\a_1(z)-\rho_B(\b_1^{-1}\b_2(\b_1(c)))\a_1\a_2^{-1}(\a_2(y))]\\
=&[\a_2^{2}(x)+\b_2^{2}(a),\{\a_2(y),\a_1(z)\}_1+\rho_A(\a_2(y))\b_1(c)-\rho_A(\a_2(z))\b_1(b)\\&+\{\b_2(b),\b_1(c)\}_B+\rho_B(\b_2(b))\a_1(z)-\rho_B(\b_2(c))\a_1(y)]\\
=&\{\a_2^{2}(x),\{\a_2(y),\a_1(z)\}_A\}_A+\{\a_2^{2}(x),\rho_B(\b_2(b))\a_1(z)\}_A-\{\a_2^{2}(x),\rho_B(\b_2(c))\a_1(y)\}_A\\&+\rho_A(\a_2^{2}(x))\rho_A(\a_2(y))\b_1(c)-\rho_A(\a_2^{2}(x))\rho_A(\a_2(z))\b_1(b)+\rho_A(\a_2^{2}(x))\{\b_2(b),\b_1(c)\}_B\\&-\rho_A(\{\a_1^{-1}\a_2^{2}(y),\a_2(z)\}_A)\b_1\b_2(a)-\rho_A(\rho_B(\b_1^{-1}\b_2^{2}(b))\a_2(z))\b_1\b_2(a)\\&+\rho_A(\rho_B(\b_1^{-1}\b_2^{2}(c))\a_2(y))\b_1\b_2(a)+\{\b_2^{2}(a),\rho_A(\a_2(y))\b_1(c)\}_B-\{\b_2^{2}(a),\rho_A(\a_2(z))\b_1(b)\}_B\\&+\{\b_2^{2}(a),\{\b_2(b),\b_1(c)\}_B\}_B+\rho_B(\b_2^{2}(a)\{\a_2(y),\a_1(z)\}_A+\rho_B(\b_2^{2}(a))\rho_B(\b_2(b))\a_1(z)\\&-\rho_B(\b_2^{2}(a))\rho_B(\b_2(c))\a_1(y)-\rho_B(\rho_A(\a_1^{-1}\a_2^{2}(y))\b_2(c))\a_1\a_2(x)\\&+\rho_B(\rho_A(\a_1^{-1}\a_2^{2}(z))\b_1(b))\a_1\a_2(x)-\rho_B(\{\b_1^{-1}\b_2^{2}(b),\b_2(c)\}_B)\a_1\a_2(x).
\end{align*}
By a direct computation we verify that $\circlearrowleft_{x,y,z}[(\a_2+\b_2)^{2}(x+a),[(\a_2+\b_2)(y+b),(\a_1+\b_1)(z+c)]]=0$. This ends the proof.
\end{proof}
We denote this BiHom-Lie algebra by $A\bowtie^{\rho_A,\beta_1,\beta_2}_{\rho_B,\alpha_1,\alpha_2}B$.
\begin{defn}
A noncommutative BiHom-Poisson algebra is a $5$-tuple
$(A,\cdot,\{\cdot,\cdot\},\alpha_1,\alpha_2)$, where $(A,\cdot,\alpha_1,\alpha_2)$ is a
BiHom-associative algebra and
$(A,\{\cdot,\cdot\},\alpha_1,\alpha_2)$ is a BiHom-Lie algebra, such that the
BiHom-Leibniz identity:
\begin{equation}\label{leibniz}\{\alpha_1\alpha_2(x),y\cdot
z\}=\{\alpha_2(x),y\}\cdot\alpha_2(z)+\alpha_2(y)\cdot\{\alpha_1(x),z\}.\end{equation}
\end{defn}
\begin{prop}\label{adimi}\cite{hadimi}
Let $(A, \cdot, \alpha_1,\a_2)$ be a regular BiHom-associative algebra. Then\\
$A^{-} = (A, \{\cdot,\cdot \}, \cdot, \alpha,\beta)$
is a regular noncommutative BiHom-Poisson algebra, where for all $x,y\in A$, $\{x,y\}=x\cdot y-\alpha_1^{-1}\a_2(y)\cdot\alpha_1\a_2^{-1}(x)$.
\end{prop}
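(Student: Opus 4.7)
The associative half of the structure is given by hypothesis, so the plan breaks into two independent tasks: (a) show that the commutator $\{x,y\}=x\cdot y-\alpha_1^{-1}\alpha_2(y)\cdot\alpha_1\alpha_2^{-1}(x)$ endows $A$ with a BiHom-Lie algebra structure (using regularity to make sense of $\alpha_1^{-1},\alpha_2^{-1}$); and (b) verify the BiHom-Leibniz identity \eqref{leibniz} linking $\cdot$ and $\{\cdot,\cdot\}$.

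For (a), I would first dispatch the cheap axioms. BiHom-multiplicativity of the bracket, $\alpha_i(\{x,y\})=\{\alpha_i(x),\alpha_i(y)\}$ for $i=1,2$, is a one-line check: apply $\alpha_i$ to each summand of the commutator and use the BiHom-multiplicativity of $\cdot$ together with $\alpha_1\alpha_2=\alpha_2\alpha_1$ to commute the $\alpha_i$ past the $\alpha_1^{-1}\alpha_2$ and $\alpha_1\alpha_2^{-1}$ twists. BiHom-skewsymmetry is also immediate: a direct expansion gives $\{\alpha_2(x),\alpha_1(y)\}=\alpha_2(x)\cdot\alpha_1(y)-\alpha_2(y)\cdot\alpha_1(x)$, which is visibly antisymmetric in the exchange $x\leftrightarrow y$.

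The substantive piece is the BiHom-Jacobi identity $\circlearrowleft_{x,y,z}\{\alpha_2^{2}(x),\{\alpha_2(y),\alpha_1(z)\}\}=0$. I would expand each nested bracket into its four associative monomials and use BiHom-multiplicativity to push all the $\alpha_1^{\pm1},\alpha_2^{\pm1}$ factors onto the arguments, obtaining twelve terms in $x,y,z$ per cyclic permutation, hence thirty-six terms in total. Grouping these terms into the six possible orderings of $\alpha_2^{2}(x),\alpha_2(y),\alpha_1(z)$ (and the twisted rearrangements thereof), each ordering occurs as exactly two expressions that differ only by the placement of parentheses; BiHom-associativity \eqref{aca} lets me rebracket one into the other, producing a cancellation in pairs. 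This is the step I expect to be the main obstacle, not conceptually but in bookkeeping, because the $\alpha_1^{-1}\alpha_2$ and $\alpha_1\alpha_2^{-1}$ twists must be shuffled carefully before \eqref{aca} can be applied; regularity is used exactly here. The argument is the direct BiHom-analogue of the classical fact that the commutator of an associative algebra is a Lie bracket.

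For (b), I would expand both sides of \eqref{leibniz} directly. On the left, $\{\alpha_1\alpha_2(x),y\cdot z\}=\alpha_1\alpha_2(x)\cdot(y\cdot z)-(\alpha_1^{-1}\alpha_2(y)\cdot\alpha_1^{-1}\alpha_2(z))\cdot\alpha_1^{2}(x)$, where I have used BiHom-multiplicativity of $\cdot$ to distribute $\alpha_1^{-1}\alpha_2$ over $y\cdot z$. On the right, $\{\alpha_2(x),y\}\cdot\alpha_2(z)+\alpha_2(y)\cdot\{\alpha_1(x),z\}$ splits into four terms. Two of them, $-(\alpha_1^{-1}\alpha_2(y)\cdot\alpha_1(x))\cdot\alpha_2(z)$ and $\alpha_2(y)\cdot(\alpha_1(x)\cdot z)$, cancel against each other by a single application of \eqref{aca} with $u=\alpha_1^{-1}\alpha_2(y)$. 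The remaining first term $(\alpha_2(x)\cdot y)\cdot\alpha_2(z)$ matches $\alpha_1\alpha_2(x)\cdot(y\cdot z)$ by \eqref{aca} with $u=\alpha_2(x)$, and the remaining last term $-\alpha_2(y)\cdot(\alpha_1^{-1}\alpha_2(z)\cdot\alpha_1^{2}\alpha_2^{-1}(x))$ matches $-(\alpha_1^{-1}\alpha_2(y)\cdot\alpha_1^{-1}\alpha_2(z))\cdot\alpha_1^{2}(x)$ by one more application of \eqref{aca}. Regularity of $(A,\cdot,\alpha_1,\alpha_2)$ is inherited by $A^{-}$, completing the proof.
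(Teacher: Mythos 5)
The paper itself gives no proof of this proposition --- it is quoted from \cite{hadimi} --- so there is nothing internal to compare against; judged on its own terms, your argument is correct and is the standard one. The explicit verification of the BiHom-Leibniz identity is right: all three applications of the BiHom-associativity axiom \eqref{aca} that you invoke are legitimate (in each case the left factor is genuinely of the form $\alpha_1(u)$ for the $u$ you name, e.g.\ $\alpha_2(y)=\alpha_1\bigl(\alpha_1^{-1}\alpha_2(y)\bigr)$), and the four surviving terms do match the two terms of the left-hand side after the one pairwise cancellation. The BiHom-Lie part is only sketched, but the strategy (skew-symmetry is immediate from $\{\alpha_2(x),\alpha_1(y)\}=\alpha_2(x)\cdot\alpha_1(y)-\alpha_2(y)\cdot\alpha_1(x)$; the BiHom-Jacobi identity follows by expanding and cancelling rebracketed pairs via \eqref{aca}) is exactly how this is proved in \cite{GRAZIANI}, and regularity is indeed what makes the twists $\alpha_1^{-1}\alpha_2$ and $\alpha_1\alpha_2^{-1}$ meaningful. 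One bookkeeping slip: each nested bracket $\{\alpha_2^{2}(x),\{\alpha_2(y),\alpha_1(z)\}\}$ expands into \emph{four} associative monomials, so the cyclic sum has twelve terms in total, not twelve per permutation and thirty-six overall; this does not affect the argument, since the twelve terms still cancel in the six rebracketed pairs you describe. You might also note in passing that the tuple in the statement should read $(A,\cdot,\{\cdot,\cdot\},\alpha_1,\alpha_2)$, consistent with the definition of a noncommutative BiHom-Poisson algebra used in Section \ref{sect2}.
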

In the following we introduce the notions of representation and matched pair of noncommutative BiHom-Poisson algebras.
\begin{defn}
Let $(A,\cdot,\{\cdot,\cdot\},\alpha_1,\alpha_2)$ be a noncommutative BiHom-Poisson algebra. A representation of $A$ is a $6$-tuple $(l,r,\rho,\beta_1,\beta_2,V)$ such that $(l,r,\beta_1,\beta_2,V)$ is a bimodule of the BiHom-associative algebra $(A,\cdot,\alpha_1,\alpha_2)$ and $(\rho,\beta_1,\beta_2,V)$ is a representation of the BiHom-Lie algebra $(A,\{\cdot,\cdot\},\alpha_1,\alpha_2)$ satisfying, for all $x,y\in A,~v\in V.$
\begin{eqnarray}
l(\{\alpha_2(x),y\})\beta_2(v)&=&\rho(\alpha_1\alpha_2(x))l(y)v-l(\alpha_2(y))\rho(\alpha_1(x))v\label{isma1.1}\\
r(\{\alpha_1(x),y\})\beta_2(v)&=&\rho(\alpha_1\alpha_2(x))r(y)v-r(\alpha_2(y))\rho(\alpha_2(x))v\label{isma1.2}\\
\rho(x\cdot y)\beta_1\beta_2(v)&=&l(\alpha_1(x))\rho(y)\beta_1(v)+r(\alpha_1(y))\rho(x)\beta_2(v)\label{isma1.3}.
\end{eqnarray}
\end{defn}
\begin{prop}\label{pro2}
Let $(l,r,\rho,\beta_1,\beta_2,V)$ be a representation of noncommutative BiHom-Poisson algebra $(A,\cdot,\{\cdot,\cdot\},\alpha_1,\alpha_2)$ such that $\a_1,~\b_2$ are bijectives. Then $(A\oplus V,\cdot',\{\cdot,\cdot\}',\alpha_1+\beta_1,\alpha_2+\beta_2)$ is a noncommutative BiHom-Poisson algebra, where $(A\oplus V,\cdot',\alpha_1+\beta_1,\alpha_2+\beta_2)$ is the semi-direct product BiHom-associative algebra $A \times_{l,r, \alpha_{1}, \alpha_{2}, \beta_{1}, \beta_{2}} V$ and $(A\oplus V,\{\cdot,\cdot\}',\alpha_1+\beta_1,\alpha_2+\beta_2)$ is the semi-direct product BiHom-Lie algebra $A \times_{\rho, \alpha_{1}, \alpha_{2}, \beta_{1}, \beta_{2}} V$\end{prop}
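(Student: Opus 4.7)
The plan is to verify the three defining properties of a noncommutative BiHom-Poisson algebra on the semi-direct product $A\oplus V$. The BiHom-associative structure of $(A\oplus V,\cdot',\alpha_1+\beta_1,\alpha_2+\beta_2)$ is supplied directly by Proposition \ref{ass1}, and the BiHom-Lie structure of $(A\oplus V,\{\cdot,\cdot\}',\alpha_1+\beta_1,\alpha_2+\beta_2)$ is supplied by Proposition \ref{pro11} (which requires the bijectivity of $\alpha_1$ and $\beta_2$ that has already been assumed in the statement). Thus the only nontrivial content of the proof is the verification of the BiHom-Leibniz identity \eqref{leibniz} for the combined data.

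First I would take arbitrary elements $x+u,\,y+v,\,z+w\in A\oplus V$ and expand both sides of
\[
\{(\alpha_1\alpha_2\oplus\beta_1\beta_2)(x+u),\,(y+v)\cdot'(z+w)\}'
\]
and
\[
\{(\alpha_2\oplus\beta_2)(x+u),\,y+v\}'\cdot'(\alpha_2\oplus\beta_2)(z+w)+(\alpha_2\oplus\beta_2)(y+v)\cdot'\{(\alpha_1\oplus\beta_1)(x+u),\,z+w\}'
\]
using the definitions
\[
(y+v)\cdot'(z+w)=y\cdot z+l(y)w+r(z)v,
\]
\[
\{x+u,y+v\}'=\{x,y\}+\rho(x)v-\rho(\alpha_1^{-1}\alpha_2(y))\beta_1\beta_2^{-1}(u).
\]
Projecting each side onto $A$ and onto $V$ splits the identity into two pieces.

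The $A$-component yields exactly the BiHom-Leibniz identity \eqref{leibniz} on $A$, which holds by hypothesis. The $V$-component produces a sum of terms in $l,r,\rho$ applied to $u,v,w$. After rearranging by the argument that carries the vector, I would group these terms and recognize three independent relations: the collection of terms whose vector argument is $w$ should collapse to \eqref{isma1.1} (with a relabeling of the twisting maps using the multiplicativity of $\alpha_1,\alpha_2$ and the commutation $\alpha_i\beta_j=\beta_j\alpha_i$); the terms whose vector argument is $v$ should collapse to \eqref{isma1.2}; and the terms whose vector argument is $u$ should collapse to \eqref{isma1.3}, after clearing the $\alpha_1^{-1},\beta_2^{-1}$ prefactors by using that $\alpha_1,\beta_2$ are bijective together with the BiHom-multiplicativity of $\cdot$ and the intertwining relations \eqref{repLie2}--\eqref{repLie3}.

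The main obstacle I expect is bookkeeping rather than genuine difficulty: the asymmetry of the BiHom-Lie bracket, which mixes $\alpha_1^{-1}\alpha_2$ on one side and $\beta_1\beta_2^{-1}$ on the other, creates a number of seemingly mismatched twistings, and one must carefully push these inverse maps through $l,r,\rho$ via the intertwining conditions before the three representation compatibility axioms \eqref{isma1.1}--\eqref{isma1.3} become visibly applicable. Once this normalization is done, every remaining identity is an instance of a single axiom, and the verification concludes.
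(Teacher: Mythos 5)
Your proposal is correct and follows essentially the same route as the paper: invoke the semi-direct product constructions for the BiHom-associative and BiHom-Lie parts, then verify the BiHom-Leibniz identity by expanding both sides and grouping the terms into the $A$-component (handled by \eqref{leibniz}) and the three $V$-blocks carried by $w$, $v$, $u$, which reduce to \eqref{isma1.1}, \eqref{isma1.2}, \eqref{isma1.3} respectively. The paper's proof performs exactly this expansion and grouping, including the normalization of the $\alpha_1^{-1}$, $\beta_2^{-1}$ prefactors you describe.
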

\begin{proof}
We prove only the BiHom-Leibniz identity. For all $x_1,x_2,x_3\in A,~v_1,v_2,v_3\in V.$
\begin{align*}
&\{(\a_1\a_2+\b_1\b_2)(x_1+v_1),(x_2+v_2)\cdot'(x_3+v_3)\}'\\&-\{(\a_2+\b_2)(x_2+v_2)\cdot'\{(\a_1+\b_1)(x_1+v_1),x_3+v_3\}'\\
&-(\a_2+\b_2)(x_2+v_2)\cdot'\{(\a_1+\b_1)(x_1+v_1),x_3+v_3\}'\\
=&\{\a_1\a_2(x_1)+\b_1\b_2(v_1),x_2\cdot x_3+l(x_2)v_3+r(x_3)v_2\}'\\&-\Big(\{\a_2(x_1),x_2\}+\rho(\a_2(x_1))v_2-\rho(\a_1^{-1}\a_2(x_2))\b_1(v_1)\Big)\cdot'(\a_2(x_1)+\b_2(v_3))\\&-(\a_2(x_2)+\b_2(v_2))\cdot'\Big(\{\a_1(x_1), x_3\}+\rho(\a_1(x_1))v_3-\rho(\a_1^{-1}\a_2(x_3))\b_1^{2}\b_2^{-1}(v_1)\Big)\\
=&\{\a_1\a_2(x_1),x_2\cdot x_3\}+\rho(\a_1\a_2(x_2))l(x_2)v_3\\&+\rho(\a_1\a_2(x_1))r(x_3) v_2-\rho(\a_1^{-1}\a_2(x_2(x_3))\b_1^{2}(v_1)\\&-\{\a_2(x_1),x_2\}\cdot\a_2(x_3)-l(\{a_2(x_1),x_2\})\b_2(v_3))\\&-r(\a_2(x_3))\rho(\a_2(x_1))v_2+r(\a_2(x_3))\rho(\a_1^{-1}\a_2(x_2))\b_1(v_1)\\&-\a_2(x_2)\cdot\{\a_1(x_1),x_3\}-l(\a_2(x_2))\rho(\a_1(x_1))v_3\\&+l(\a_2(x_2))\rho(\a_1^{-1}\a_2(x_3))\b_1^{2}\b_2^{_1}(v_1)-r(\{\a_1(x_1),x_3\})\b_2\\=&\Big(\{\a_1\a_2(x_1),x_2\cdot x_3\}-\{`\a_2(x_1),x_2\}\cdot\a_2(x_3)-\a_2(x_2)\{\a_1(x_1),x_3\}\Big)\\
&+\Big(\rho(\a_1\a_2(x_1))l(x_2)v_3-l(\{\a_2(x_1),x_2\})\b_2(v_3)-l(\a_2(x_2))\rho(\a_1(x_1))v_3\Big)\\&+\Big(\rho(\a_1\a_2(x_1))r(x_3)v_2-r(\a_2(x_3))\rho(\a_2(x_1))v_2-r(\{\a_1(x_1)),x_3\})\b_2(v_2)\Big)\\&-\Big(\rho(\a_1^{-1}\a_2(x_2\cdot x_3))\b_1^{2}(v_1)+r(\a_2(x_3))\rho(\a_1^{-1}\a_2(x_2))\b_1(v_1)\\&+l(\a_2(x_2))\rho(\a_1^{-1}\a_2(x_3))\b_1^{2}\b_{2}^{-1}(v_1)\Big)
=0+0+0+0=0.
\end{align*}
Then $(A\oplus V,\cdot',\{\cdot,\cdot\}',\alpha_1+\beta_1,\alpha_2+\beta_2)$ is a noncommutative BiHom-Poisson algebra and we denote by $A \times_{l,r,\rho, \alpha_{1}, \alpha_{2}, \beta_{1}, \beta_{2}} V.$
\end{proof}
\begin{ex}
Let $(A,\cdot,\{\cdot,\cdot\},\alpha_1,\a_2)$ be a noncommutative BiHom-Poisson algebra.
Then $(L_{\cdot},R_{\cdot},ad,\alpha_1,\a_2,A)$ is a regular
representation of $A$, where $L_{\cdot}(a)b=a\cdot b,~R_{\cdot}(a)b=b\cdot
a$ and
$ad(a)b=[a,b]$, for all $a,b\in A$.
\end{ex}
\begin{thm}
Let $(A,\cdot_A,\{\cdot,\cdot\}_A,\alpha_1,\alpha_2)$ and $(B,\cdot_B,\{\cdot,\cdot\}_{B},\beta_1,\beta_2)$ be two noncommutative BiHom-Poisson algebras. Suppose that there are linear maps $l_A,r_A,\rho_A:A\rightarrow gl(B)$
and $l_B,r_B,\rho_B:B\rightarrow gl(A)$ such that $A\bowtie^{\rho_A,\beta_1,\beta_2}_{\rho_B,\alpha_1,\alpha_2}B$ is a matched pair of BiHom-Lie algebras and  $A\bowtie^{l_A,r_A,\beta_1,\beta_2}_{l_B,r_B,\alpha_1,\alpha_2}B$ is a matched pair of BiHom-associative algebras and for all $x,y\in A,~a,b\in B$, the following equalities hold:
     \begin{eqnarray}\label{101}
\rho_A(\alpha_1\alpha_2^{2}(x))(\beta_1(a)\cdot_B \beta_1(b))&=&(\rho_A(\alpha_2^{2}(x))\beta_1(a))\cdot_B \beta_1\beta_2(b)+\beta_1\beta_2(a)\cdot_B\rho_A(\alpha_1\alpha_2(x))\beta_1(b)\nonumber\\&-&l_A(\rho_B(\beta_2(a))\alpha_2(x))\beta_1\beta_2(b)-r_A(\rho_B(\beta_2(b))\alpha_1^{2}(x))\beta_1\beta_2(a),\nonumber\\&&
\end{eqnarray}
\begin{eqnarray}\label{102}
l_A(\alpha_1\alpha_2(x))\{\beta_1(a),\beta_1(b)\}_B&=&\{\beta_1\beta_2(a),l_A(\alpha_1(x))\beta_1(b)\}_B-\rho_A(r_B(\beta_2(b))\alpha_2^{2}(x))\beta_1^{2}(a)\nonumber\\&-&l_A(\rho_B(\beta_2(a))\alpha_1(x))\beta_1\beta_2(b)+(\rho_A(\alpha_2(x))\beta_1(a))\cdot_B\beta_1\beta_2(b),\nonumber\\&&
\end{eqnarray}
 \begin{eqnarray}\label{103}
\rho_B(\beta_1\beta_2^{2}(a))(\alpha_1(x)\cdot_A \alpha_1(y))&=&(\rho_B(\beta_2^{2}(a))\alpha_1(x))\cdot_A \alpha_1\alpha_2(y)+\alpha_1\alpha_2(x)\cdot_A\rho_B(\beta_1\beta_2(a))\alpha_1(y)\nonumber\\&-&l_B(\rho_A(\alpha_2(x))\beta_2(a))\alpha_1\alpha_2(y)-r_B(\rho_A(\alpha_2(y))\beta_1^{2}(a))\alpha_1\alpha_2(x),\nonumber\\&&
\end{eqnarray}
\begin{eqnarray}\label{104}
l_B(\beta_1\beta_2(a))\{\alpha_1(x),\alpha_1(y)\}_A&=&\{\alpha_1\alpha_2(x),l_B(\beta_1(a))\alpha_1(y)\}_A-\rho_B(r_A(\alpha_2(y))\beta_2^{2}(a))\alpha_1^{2}(x)\nonumber\\&-&l_B(\rho_A(\beta_2(x))\beta_1(a))\alpha_1\alpha_2(y)+(\rho_B(\beta_2(a))\alpha_1(x))\cdot_A\alpha_1\alpha_2(y).\nonumber\\&&
\end{eqnarray}
Then $(A,B,l_A,r_A,\rho_A,\beta_1,\beta_2,l_B,r_B,\rho_B,\alpha_1,\alpha_2)$ is called a matched pair of noncommutative
BiHom-Poisson algebras. Moreover, assume that $(A,\{\cdot,\cdot\}_A,\alpha_1,\alpha_2)$ and $(B,\{\cdot,\cdot\}_{B},\beta_1,\beta_2)$ be two regular noncommutative BiHom-Poisson algebras, then, there exists a noncommutative BiHom-Poisson algebra structure on the direct sum
$A\oplus B$ of the underlying vector spaces of $A$ and $B$ given by
$$\begin{array}{llllll}
(x + a) \cdot (y + b)&:=&x \cdot_A y + (l_A(x)b + r_A(y)a)+a \cdot_B b + (l_B(a)y + r_B(b)x),\cr
[x+a,y+b]&:=&\{x,y\}_A+\rho_A(x)b-\rho_A(\alpha_{1}^{-1}\alpha_2(y))\beta_1\beta_{2}^{-1}(a)\cr&+&\{a,b\}_B+\rho_B(a)y-\rho_B(\beta_{1}^{-1}\beta_2(b))\alpha_1\alpha_{2}^{-1}(x),\cr
(\alpha_{1}\oplus\beta_{1})(x + a)&:=&\alpha_{1}(x) + \beta_{1}(a),\cr (\alpha_{2}\oplus\beta_{2})(x + a)&:=&\alpha_{2}(x) + \beta_{2}(a).
\end{array}$$
for any $x,y\in A,~a,b\in B.$
\end{thm}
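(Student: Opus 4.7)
The plan is to build on the two matched-pair theorems already proved in the excerpt and then verify only the single remaining identity tying the associative and Lie structures together. Concretely, the matched-pair hypotheses for BiHom-associative algebras give, via Theorem \ref{matched ass}, a BiHom-associative algebra $(A\oplus B,\cdot',\alpha_1+\beta_1,\alpha_2+\beta_2)$, and the matched-pair hypotheses for BiHom-Lie algebras together with the regularity assumption give, via Theorem \ref{matched Lie}, a BiHom-Lie algebra $(A\oplus B,\{\cdot,\cdot\}',\alpha_1+\beta_1,\alpha_2+\beta_2)$ with the bracket displayed in the statement. Writing $X=x+a$, $Y=y+b$, $Z=z+c$, it remains to establish the BiHom-Leibniz identity
\[\{(\alpha_1\alpha_2+\beta_1\beta_2)(X),\,Y\cdot' Z\}'=\{(\alpha_2+\beta_2)(X),Y\}'\cdot'(\alpha_2+\beta_2)(Z)+(\alpha_2+\beta_2)(Y)\cdot'\{(\alpha_1+\beta_1)(X),Z\}'.\]

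To carry this out, I would expand each of the three terms using the defining formulas for $\cdot'$ and $\{\cdot,\cdot\}'$. The result is a long sum of monomials, each landing in either $A$ or $B$; I would then sort them by target component and by whether they are ``pure'' (built only from $\cdot_A,\{\cdot,\cdot\}_A$ or only from $\cdot_B,\{\cdot,\cdot\}_B$) or ``mixed'' (involve one of the crossed operators $l_A,r_A,\rho_A,l_B,r_B,\rho_B$). The monomials involving only $x,y,z$ and the pure $A$-operations cancel by the BiHom-Leibniz identity \eqref{leibniz} applied to $A$, and symmetrically the monomials involving only $a,b,c$ and the pure $B$-operations cancel by the BiHom-Leibniz identity applied to $B$.

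The remaining mixed monomials naturally partition into four families indexed by where the $A$/$B$ inputs sit. The two families corresponding to ``one $A$-vector and two $B$-vectors'' (inputs of type $(x,b,c)$ with permutations) cancel using \eqref{101} and \eqref{102} together with the bimodule axioms \eqref{asss1}--\eqref{3} for $(l_A,r_A,\beta_1,\beta_2,B)$, the representation axioms \eqref{repLie1}--\eqref{repLie3} for $(\rho_A,\beta_1,\beta_2,B)$, and the BiHom-associative matched-pair identities \eqref{3}--\eqref{8}; symmetrically, the two families with ``two $A$-vectors and one $B$-vector'' cancel using \eqref{103} and \eqref{104} together with the corresponding bimodule and representation axioms on $A$ and the BiHom-Lie matched-pair identities \eqref{Lie1}--\eqref{Lie2}. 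The regularity hypothesis is invoked wherever the inverse twisting maps $\alpha_1^{-1},\beta_2^{-1}$ appearing in the defining formula for $\{\cdot,\cdot\}'$ need to be moved past compositions with $\alpha_i,\beta_i$.

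The main obstacle I anticipate is purely organizational: the expansion produces several dozen terms, and the key is to align the compositions of $\alpha_i$ and $\beta_i$ so that each cluster matches one side of exactly one of \eqref{101}--\eqref{104} on the nose. Once the clusters are identified correctly, each one reduces to zero by a direct application of the corresponding compatibility axiom, in the same spirit as the calculation carried out in Proposition \ref{pro2} for the semi-direct product (which is the degenerate case where $B$ is merely a representation, with all brackets and products on $B$ set to zero).
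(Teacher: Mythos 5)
Your proposal follows essentially the same route as the paper: invoke Theorem \ref{matched ass} and Theorem \ref{matched Lie} to obtain the BiHom-associative and BiHom-Lie structures on $A\oplus B$, then verify the BiHom-Leibniz identity directly using the compatibility conditions \eqref{101}--\eqref{104}. In fact your outline of the term-sorting for the Leibniz identity is more explicit than the paper's own proof, which simply asserts that this last verification is carried out "in a similar way" to the earlier semi-direct product and matched-pair computations.
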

\begin{proof}
By Theorem \ref{matched ass} and Theorem \ref{matched Lie}, we deduce that $(A\oplus B,\cdot,\a_1+\b_1,\a_2+\b_2)$
is a BiHom-associative algebra and $(A\oplus B,[\cdot,\cdot],\a_1+\b_1,\a_2+\b_2)$ is a BiHom-Lie algebra.
Now, the
rest, it is easy ( in a similar way as for Proposition \ref{matched Lie} and Proposition \ref{matched ass}) to verify the BiHom-Leibniz identity satisfied.
\end{proof}
We denote this noncommutative BiHom-Poisson algebra by $A\bowtie^{l_A,r_A,\rho_A,\beta_1,\beta_2}_{l_B,r_B,\rho_B,\alpha_1,\alpha_2}B$.
\section{Bimodules and matched pairs of noncommutative BiHom-pre-Poisson algebras}
In this section we introduce the definition and bimodule of a noncommutative BiHom-pre-Poisson algebra. We also
establish the matched pair of noncommutative BiHom-pre-Poisson algebra and equivalently link them to a matched pair of
their underlying noncommutative BiHom-Poisson algebras.
\begin{defn}\cite{luimakhlouf}
A BiHom-pre-Lie algebra $(A,\ast, \alpha_1,\a_2)$ is a vector space $A$
equipped with a bilinear product $\ast:A\otimes A\rightarrow A,$ and two linear maps
$\alpha_1,\a_2\in End(A)$, such that for all $x, y, z\in A,~\alpha_1(x\ast y)
=\alpha_1(x)\ast\alpha_1(y),~\a_2(x\ast y)
=\a_2(x)\ast\a_2(y)$ and the following equality is satisfied:
\begin{equation}\label{klkl}
(\a_2(x)\ast \a_1(y))\ast\a_2(z)-\a_1\a_2(x)\ast(\a_1(y)\ast z)=(\a_2(y)\ast
\a_1(x))\ast\a_2(z)-\a_1\a_2(y)\ast(\a_1(x)\ast z).
\end{equation}
The equation (\ref{klkl}) is called BiHom-pre-Lie identity.
\end{defn}
\begin{lem}\label{lem1}\cite{luimakhlouf}
Let $(A,\ast,\a_1,\a_2)$ be a regular BiHom-pre-Lie algebra. Then
$(A,[\cdot,\cdot],\a_1,\a_2)$ is a BiHom-Lie algebra with
$$[x, y]=x\ast y-\a_1^{-1}\a_2(y)\ast \a_1\a_2^{-1}(x),$$
for any $x,y \in A$. We say that $(A,[\cdot,\cdot],\a_1,\a_2)$ is the
sub-adjacent BiHom-Lie algebra of $(A,\ast,\a_1,\a_2)$ and denoted by $A^{c}$.
\end{lem}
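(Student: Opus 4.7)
The plan is to verify, in order, the three defining properties of a BiHom-Lie algebra for the bracket $[x,y] := x\ast y - \alpha_1^{-1}\alpha_2(y)\ast \alpha_1\alpha_2^{-1}(x)$: BiHom-multiplicativity, BiHom-skew-symmetry, and the BiHom-Jacobi identity. Throughout, regularity of $(A,\ast,\alpha_1,\alpha_2)$ ensures $\alpha_1^{\pm 1}$ and $\alpha_2^{\pm 1}$ are well-defined morphisms commuting with one another and with $\ast$.

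For multiplicativity, I would apply $\alpha_i$ ($i=1,2$) to both terms of $[x,y]$; since $\alpha_i$ is multiplicative for $\ast$ and commutes with $\alpha_1^{-1}\alpha_2$ and $\alpha_1\alpha_2^{-1}$, each summand of $\alpha_i([x,y])$ rearranges into the corresponding summand of $[\alpha_i(x),\alpha_i(y)]$. For BiHom-skew-symmetry, a direct expansion gives
\[
[\alpha_2(x),\alpha_1(y)] \;=\; \alpha_2(x)\ast\alpha_1(y) \;-\; \alpha_1^{-1}\alpha_2(\alpha_1(y))\ast\alpha_1\alpha_2^{-1}(\alpha_2(x)) \;=\; \alpha_2(x)\ast\alpha_1(y) - \alpha_2(y)\ast\alpha_1(x),
\]
which is visibly antisymmetric under swapping $x \leftrightarrow y$, so $[\alpha_2(x),\alpha_1(y)] = -[\alpha_2(y),\alpha_1(x)]$.

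The main obstacle is the BiHom-Jacobi identity $\circlearrowleft_{x,y,z}[\alpha_2^{2}(x),[\alpha_2(y),\alpha_1(z)]]=0$. The approach is: first simplify the inner bracket using skew-symmetry and multiplicativity to obtain $[\alpha_2(y),\alpha_1(z)] = \alpha_2(y)\ast\alpha_1(z) - \alpha_2(z)\ast\alpha_1(y)$; then expand the outer bracket so that each of the three cyclic terms contributes four summands, each of the form $(u\ast v)\ast w$ or $u\ast(v\ast w)$ with all arguments carrying matching twists $\alpha_2^{2}$, $\alpha_2$, or $\alpha_1$ in the appropriate slots. This produces twelve terms for the cyclic sum. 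The BiHom-pre-Lie identity (\ref{klkl}) asserts exactly that the associator
\[
\mathrm{as}(x,y,z) \;:=\; (\alpha_2(x)\ast\alpha_1(y))\ast\alpha_2(z) - \alpha_1\alpha_2(x)\ast(\alpha_1(y)\ast z)
\]
is symmetric in its first two entries. Grouping the twelve terms into six associator pairs and applying (\ref{klkl}) once to each pair shows the cyclic sum collapses to zero.

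The verification is entirely mechanical once the twelve terms are correctly normalized; the only delicate point is consistent bookkeeping of the twists so that each associator has its arguments in the exact shape required by (\ref{klkl}) before the symmetry is invoked. No step requires anything beyond regularity, multiplicativity of $\alpha_1,\alpha_2$ with respect to $\ast$, and the BiHom-pre-Lie identity itself.
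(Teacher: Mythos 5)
The paper does not prove this lemma; it is imported from \cite{luimakhlouf}, so there is no in-paper argument to compare against. Your proposal is correct and is the standard argument: reading the BiHom-pre-Lie identity (\ref{klkl}) as the symmetry of the twisted associator in its first two slots, and checking (using $\alpha_1\alpha_2=\alpha_2\alpha_1$ and that $\alpha_1^{-1},\alpha_2^{-1}$ are again morphisms for $\ast$) that the twelve summands of $\circlearrowleft_{x,y,z}[\alpha_2^{2}(x),[\alpha_2(y),\alpha_1(z)]]$ assemble into associators with arguments $u=\alpha_1^{-1}\alpha_2(x)$, $v=\alpha_1^{-1}\alpha_2(y)$, $w=\alpha_1(z)$ and their cyclic shifts, which cancel pairwise. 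One small bookkeeping correction: the twelve terms form \emph{six} associators, which cancel under \emph{three} applications of (\ref{klkl}) (each application equates two associators), not six; otherwise the sketch is complete and sound.
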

Let us recall now the notion of bimodule of a BiHom-pre-Lie algebra given in \cite{left symm}.
\begin{defn}
Let $(A, \ast, \alpha_{1}, \alpha_{2})$ be a BiHom-pre-Lie algebra, and let $(V, \beta_{1}, \beta_{2})$ be a BiHom-module. Let $ l_\ast, r_\ast: A \rightarrow gl(V) $ be two linear maps. The quintuple $(l_\ast, r_\ast, \beta_{1}, \beta_{2}, V)$ is called a bimodule of $A$ if for all $ x, y \in  A, v \in V $
\begin{eqnarray}
l_\ast(\{\alpha_{2}(x),\alpha_{1}(y)\})\beta_{2}(v)\nonumber&=&
l_\ast(\alpha_{1}\alpha_{2}(x))l_\ast(\alpha_{1}(y))v\nonumber\\
&-&l_\ast(\alpha_{1}\alpha_{2}(y))l_\ast(\alpha_{1}(x))v\\
r_\ast(\a_2(y))\rho(\a_2(x))\b_1(v)&=&l_\ast(\a_1\a_2(x))r_\ast(y)\b_1(v)\nonumber\\&-&r_\ast(\a_1(x)\ast y)\b_1\b_2(v)\\
\beta_{1}(l_\ast(x)v)&=& l_\ast(\alpha_{1}(x))\beta_{1}(v),\\ \beta_{1}(r_\ast(x)v)&=& r_\ast(\alpha_{1}(x))\beta_{1}(v),\\
\beta_{2}(l_\ast(x)v) &=& l_\ast(\alpha_{2}(x))\beta_{2}(v),\\\beta_{2}(r_\ast(x)v)&=& r_\ast(\alpha_{2}(x))\beta_{2}(v)
\end{eqnarray}
.where $\{\a_2(x),\a_1(y)\}=\a_2(x)\ast\a_1(y)-\a_2(y)\ast\a_1(x)$ and $(\rho\circ\a_2)\b_1=(l_\ast\circ\a_2)\b_1-(r_\ast\circ\a_1)\b_2.$
\end{defn}
\begin{prop}
Let $(l_\ast, r_\ast, \beta_{1}, \beta_{2}, V)$ be a bimodule of a BiHom-pre-Lie algebra
$(A, \ast, \alpha_{1}, \alpha_{2})$. Then, the direct sum $A \oplus V$ of vector spaces is turned into a
 BiHom-pre-Lie algebra  by defining multiplication in $A \oplus V $ by
\begin{eqnarray*}
&&(x_{1} + v_{1}) \ast' (x_{2} + v_{2}):=x_{1} \ast x_{2} + (l_\ast(x_{1})v_{2} + r_\ast(x_{2})v_{1}),\cr
&&(\alpha_{1}\oplus\beta_{1})(x_{1} + v_{1}):=\alpha_{1}(x_{1}) + \beta_{1}(v_{1}),\cr&&(\alpha_{2}\oplus\beta_{2})(x_{1} + v_{1}):=\alpha_{2}(x_{1}) + \beta_{2}(v_{1}),
\end{eqnarray*}
for all $ x_{1}, x_{2} \in  A, v_{1}, v_{2} \in V$.
\end{prop}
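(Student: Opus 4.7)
The plan is to verify the two defining axioms of a BiHom-pre-Lie algebra for $(A\oplus V, \ast', \alpha_1+\beta_1, \alpha_2+\beta_2)$: namely, BiHom-multiplicativity with respect to the twisting maps (together with commutation of the two twists), and the BiHom-pre-Lie identity~(\ref{klkl}) applied to arbitrary elements $x_i+v_i$. Commutation $(\alpha_1+\beta_1)(\alpha_2+\beta_2)=(\alpha_2+\beta_2)(\alpha_1+\beta_1)$ is immediate, since $\alpha_1\alpha_2=\alpha_2\alpha_1$ comes from the BiHom-pre-Lie structure on $A$ and $\beta_1\beta_2=\beta_2\beta_1$ comes from the BiHom-module structure on $V$.

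For multiplicativity, I would expand $(\alpha_i+\beta_i)\bigl((x_1+v_1)\ast'(x_2+v_2)\bigr)$ and compare with $(\alpha_i+\beta_i)(x_1+v_1)\ast'(\alpha_i+\beta_i)(x_2+v_2)$ for $i=1,2$. The $A$-component matches because $\ast$ is multiplicative with respect to $\alpha_1,\alpha_2$. The $V$-components reduce to the four identities $\beta_i(l_\ast(x)v)=l_\ast(\alpha_i(x))\beta_i(v)$ and $\beta_i(r_\ast(x)v)=r_\ast(\alpha_i(x))\beta_i(v)$, which are exactly the last four conditions in the definition of the bimodule.

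The main task is to verify the BiHom-pre-Lie identity, i.e.\ that
\[
\bigl((\alpha_2+\beta_2)(x_1+v_1)\ast'(\alpha_1+\beta_1)(x_2+v_2)\bigr)\ast'(\alpha_2+\beta_2)(x_3+v_3)
-(\alpha_1\alpha_2+\beta_1\beta_2)(x_1+v_1)\ast'\bigl((\alpha_1+\beta_1)(x_2+v_2)\ast'(x_3+v_3)\bigr)
\]
is symmetric in $(x_1+v_1)\leftrightarrow(x_2+v_2)$. After expanding $\ast'$, the $A$-component reduces to the BiHom-pre-Lie identity of $(A,\ast,\alpha_1,\alpha_2)$. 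The $V$-component splits according to which input carries the $V$-vector. Terms carrying $v_3$ collect into an expression of the form $l_\ast(\alpha_2(x_1)\ast\alpha_1(x_2))\beta_2(v_3)-l_\ast(\alpha_1\alpha_2(x_1))l_\ast(\alpha_1(x_2))v_3$ plus its $x_1\leftrightarrow x_2$ swap, and the antisymmetrization of these — upon writing $\{\alpha_2(x_1),\alpha_1(x_2)\}=\alpha_2(x_1)\ast\alpha_1(x_2)-\alpha_2(x_2)\ast\alpha_1(x_1)$ — is exactly the first bimodule identity. Terms carrying $v_1$ collect into an $r_\ast$-expression that becomes symmetric in $x_2,x_3$ after applying the BiHom-pre-Lie identity once.

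The main obstacle will be the terms carrying $v_2$, which involve a mixed $l_\ast\,r_\ast$ combination rather than pure $l_\ast$- or $r_\ast$-expressions. For these I would use the second bimodule identity together with the compatibility relation $(\rho\circ\alpha_2)\beta_1 = (l_\ast\circ\alpha_2)\beta_1-(r_\ast\circ\alpha_1)\beta_2$ noted after the bimodule definition, which translates the $\rho(\alpha_2(x))\beta_1(v)$-type expressions back into differences of $l_\ast$- and $r_\ast$-expressions. Substituting this converts the second bimodule identity into precisely the mixed relation needed to kill the $v_2$-terms after antisymmetrizing in $x_1\leftrightarrow x_2$. Once each of the three families of $V$-terms vanishes independently, the BiHom-pre-Lie identity follows, completing the proof.
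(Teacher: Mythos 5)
The paper states this proposition without any proof, so there is nothing to compare against; judged on its own, your verification strategy (multiplicativity plus commutation of the twists, then the BiHom-pre-Lie identity split into an $A$-component and three families of $V$-terms) is the right one, and the multiplicativity check and the $v_3$- and $v_2$-analyses are correct as you describe them. The one step that is wrong as written is your disposal of the $v_1$-terms. Write $E(X_1,X_2,X_3)=((\alpha_2+\beta_2)(X_1)\ast'(\alpha_1+\beta_1)(X_2))\ast'(\alpha_2+\beta_2)(X_3)-(\alpha_1\alpha_2+\beta_1\beta_2)(X_1)\ast'((\alpha_1+\beta_1)(X_2)\ast' X_3)$ with $X_i=x_i+v_i$; you must show $E(X_1,X_2,X_3)=E(X_2,X_1,X_3)$. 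The terms carrying $v_1$ in $E(X_1,X_2,X_3)$ are the pure $r_\ast$-expression $r_\ast(\alpha_2(x_3))r_\ast(\alpha_1(x_2))\beta_2(v_1)-r_\ast(\alpha_1(x_2)\ast x_3)\beta_1\beta_2(v_1)$; this does not cancel on its own, is not symmetric in $x_2,x_3$, and the BiHom-pre-Lie identity of $A$ plays no role for it. It must instead be matched against the $v_1$-terms of $E(X_2,X_1,X_3)$, where $v_1$ now occupies the second slot, namely $r_\ast(\alpha_2(x_3))l_\ast(\alpha_2(x_2))\beta_1(v_1)-l_\ast(\alpha_1\alpha_2(x_2))r_\ast(x_3)\beta_1(v_1)$. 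The difference of the two is $-r_\ast(\alpha_2(x_3))\rho(\alpha_2(x_2))\beta_1(v_1)+l_\ast(\alpha_1\alpha_2(x_2))r_\ast(x_3)\beta_1(v_1)-r_\ast(\alpha_1(x_2)\ast x_3)\beta_1\beta_2(v_1)$ after substituting $(\rho\circ\alpha_2)\beta_1=(l_\ast\circ\alpha_2)\beta_1-(r_\ast\circ\alpha_1)\beta_2$, and this vanishes by the second bimodule axiom --- exactly the mechanism you correctly assign to the $v_2$-family.

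So the correction is that the $v_1$- and $v_2$-families are not handled by two different devices: both are instances of the single mixed identity (second bimodule axiom plus the $\rho$ relation), applied with $x=x_2,\ y=x_3,\ v=v_1$ and with $x=x_1,\ y=x_3,\ v=v_2$ respectively, each pairing a pure $r_\ast r_\ast$ contribution from one ordering with a mixed $l_\ast/r_\ast$ contribution from the other. If you followed your stated plan literally for the $v_1$-terms you would get stuck, but the tool you need is already present in your treatment of the $v_2$-terms; with that step repaired the proof is complete.
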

We denote such a BiHom-pre-Lie algebra by $(A\oplus V, \ast', \alpha_{1} + \beta_{1}, \alpha_{2} + \beta_{2}),$\\
or $A\times_{l_\ast, r_\ast, \alpha_{1}, \alpha_{2}, \beta_{1}, \beta_{2}} V.$\\
\begin{prop}\label{propa1}
Let ($ l_{\ast},
r_{\ast}, \beta_1,\b_2, V$) be a bimodule of a regular BiHom-pre-Lie algebra
$(A,\ast, \alpha_1,\a_2)$ such that $\b_1$ is bijective. Let $(A, \{\cdot,\cdot\}, \alpha_1,\a_2)$ be the
subadjacent BiHom-Lie algebra of $(A, \ast, \alpha_1,\a_2)$. Then ($l_\ast-(r_\ast\circ\a_1\a_2^{-1})\b_1^{-1}\b_2,\b_1,\b_2,V$) is a
representation of Lie algebra $(A,\{\cdot,\cdot\}, \alpha_1,\a_2)$.
\end{prop}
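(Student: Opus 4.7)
The plan is to verify the three defining conditions of a representation of a BiHom-Lie algebra in Definition~2.12 for the operator $\rho := l_\ast - (r_\ast \circ \alpha_1\alpha_2^{-1})\beta_1^{-1}\beta_2$. Concretely, I need to check that $\rho$ commutes with $\beta_1$ and $\beta_2$ in the appropriate twisted sense, and then verify the twisted Jacobi-type identity \eqref{repLie1} relating $\rho([\alpha_2(x),y])\beta_2$ to the commutator of $\rho(\alpha_1\alpha_2(x))\rho(y)$ and $\rho(\alpha_2(y))\rho(\alpha_1(x))$.

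For the two commutation identities \eqref{repLie2} and \eqref{repLie3}, I would substitute the definition of $\rho$ and use the four identities $\beta_i(l_\ast(x)v) = l_\ast(\alpha_i(x))\beta_i(v)$ and $\beta_i(r_\ast(x)v) = r_\ast(\alpha_i(x))\beta_i(v)$ from the bimodule axioms, together with the commutation relations $\alpha_1\alpha_2 = \alpha_2\alpha_1$ and $\beta_1\beta_2 = \beta_2\beta_1$ (so in particular $\beta_1^{-1}\beta_2 = \beta_2\beta_1^{-1}$). This is a short direct calculation with no real content beyond bookkeeping of twists.

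For the main identity, I would expand the bracket via Lemma~\ref{lem1} as $\{\alpha_2(x),y\} = \alpha_2(x)\ast y - \alpha_1^{-1}\alpha_2(y)\ast\alpha_1(x)$, substitute $\rho = l_\ast - (r_\ast\circ\alpha_1\alpha_2^{-1})\beta_1^{-1}\beta_2$ on both sides, and split the resulting identity into two parts. The first part uses the bimodule identity
\[
l_\ast(\{\alpha_2(x),\alpha_1(y)\})\beta_2(v) = l_\ast(\alpha_1\alpha_2(x))l_\ast(\alpha_1(y))v - l_\ast(\alpha_1\alpha_2(y))l_\ast(\alpha_1(x))v
\]
to handle the purely $l_\ast$-terms; the second part uses the mixed bimodule identity $r_\ast(\alpha_2(y))\rho_\ast(\alpha_2(x))\beta_1 = l_\ast(\alpha_1\alpha_2(x))r_\ast(y)\beta_1 - r_\ast(\alpha_1(x)\ast y)\beta_1\beta_2$ (rewritten with the convention $\rho_\ast = l_\ast - r_\ast \circ \alpha_1\alpha_2^{-1}\circ(\text{twist})$) to handle the cross-terms, and the BiHom-pre-Lie identity \eqref{klkl} to handle the purely $r_\ast$-terms, rewriting $r_\ast$ of pre-Lie products via the bimodule axioms.

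The main obstacle is the careful bookkeeping of twists by $\alpha_1, \alpha_2, \beta_1, \beta_2$ and their inverses after substituting $\rho$ into the Lie-representation identity; one expects four groups of terms (LL, LR, RL, RR) on the right-hand side, each of which must be matched with a corresponding term produced by applying $l_\ast$ or $r_\ast(\cdot)\beta_1^{-1}\beta_2$ to the two summands of $\{\alpha_2(x),y\}$. The essential content is the standard fact that $l - r$ converts the pre-Lie bimodule into a representation of the sub-adjacent Lie algebra, but the BiHom setting requires repeated use of $\alpha_1\alpha_2 = \alpha_2\alpha_1$ and $\beta_1\beta_2 = \beta_2\beta_1$ (together with the bijectivity of $\beta_1$ and of $\alpha_2$, which comes from regularity of $A$) to normalize the twists on each side before applying the pre-Lie identity.
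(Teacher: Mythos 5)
Your overall strategy coincides with the paper's: verify \eqref{repLie2}--\eqref{repLie3} by pushing $\beta_1,\beta_2$ through $l_\ast$ and $r_\ast$ with the bimodule axioms, then expand $\rho(\alpha_1\alpha_2(x))\rho(y)v-\rho(\alpha_2(y))\rho(\alpha_1(x))v$ into its eight $l_\ast/r_\ast$ constituents and regroup. One point in your plan would trip you up if followed literally: you propose to handle the cross-terms with the mixed bimodule axiom and the purely $r_\ast$-terms separately with the BiHom-pre-Lie identity \eqref{klkl}. That split is not available. The bimodule of a BiHom-pre-Lie algebra has only two multiplicative axioms, and neither of them relates a composition $r_\ast(\cdot)\circ r_\ast(\cdot)$ to anything on its own; the only axiom in which such a composition appears is the mixed one, $r_\ast(\a_2(y))\rho(\a_2(x))\b_1(v)=l_\ast(\a_1\a_2(x))r_\ast(y)\b_1(v)-r_\ast(\a_1(x)\ast y)\b_1\b_2(v)$, whose left-hand side, once $\rho$ is expanded, already contains one $r_\ast l_\ast$ term \emph{and} one $r_\ast r_\ast$ term. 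So in the paper's computation each of the two applications of the mixed axiom consumes one LR, one RL and one RR term simultaneously, producing a single $r_\ast(\text{pre-Lie product})$ term; the two outputs then combine into $-r_\ast(\{\a_1(x),\a_1\a_2^{-1}(y)\})\b_1^{-1}\b_2^{2}(v)$ using only the \emph{definition} of the sub-adjacent bracket, and the identity \eqref{klkl} itself is never invoked (it is only needed for Lemma~\ref{lem1}, which you are already assuming). With that regrouping corrected, the rest of your bookkeeping (commutativity of the structure maps, bijectivity of $\b_1$ and of $\a_1,\a_2$ from regularity) is exactly what the paper does.
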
\begin{proof}
For all $x,y\in A$
\begin{align*}
&\b_1\circ (l_\ast(x)-r_\ast(\a_1\a_2^{-1}(x))\b_1^{-1}\b_2)(v)\\
=&\b_1\circ l_\ast(x)v-\b_1\circ(r_\ast(\a_1\a_2^{-1}(x))\b_1^{-1}\b_2(v)\\
=&l_\ast(\a_1(x))\b_1(v)-(r_\ast\circ\a_1\a_2^{-1}(\a_1(x)))\b_1^{-1}\b_2(\b_1(v))\\
=&(l_\ast-(r_\ast\circ\a_1\a_2^{-1})\b_1^{-1}\b_2)(\a_1(x))\b_1(v).
\end{align*}
In the same way, we have $\b_2\circ (l_\ast-(r_\ast\circ\a_1\a_2^{-1}))(x)\b_1^{-1}\b_2)(v)=(l_\ast-(r_\ast\circ\a_1\a_2^{-1})\b_1^{-1}\b_2)(\a_2(x))\b_2(v)$.

Finally, for all $x,y\in A$, we have
\begin{align*}
&(l_\ast-(r_\ast\circ\a_1\a_2^{-1})\b_1^{-1}\b_2)(\a_1\a_2(x))\circ(l_\ast-(r_\ast\circ\a_1\a_2^{-1})\b_1^{-1}\b_2)(y)v\\
&-(l_\ast-(r_\ast\circ\a_1\a_2^{-1})\b_1^{-1}\b_2)(\a_2(y)\circ(l_\ast-(r_\ast\circ\a_1\a_2^{-1})\b_1^{-1}\b_2)(\a_1(x))v\\
=&l_\ast(\a_1\a_2(x))\circ l_\ast(y)v-(r_\ast\circ\a_1^{2}(x))\b_1^{-1}\b_2\circ l_\ast(y)v\\
&-l_\ast(\a_1\a_2(x))\circ(r_\ast\circ\a_1\a_2^{-1}(y))\b_1^{-1}\b_2(v)+(r_\ast\circ\a_1^{2}(x))\b_1^{-1}\b_2\circ(r_\ast\circ\a_1\a_2^{-1}(y))\b_1^{-1}\b_2(v)\\
&-l_\ast(\a_2(y))\circ l_\ast(\a_1(x))v+l_\ast(\a_2(y))(r_\ast\circ\a_1^{2}\a_2^{-1}(x))\b_1^{-1}\b_2(v)\\
&+(r_\ast\circ\a_1(y))\b_1^{-1}\b_2\circ l_\ast(\a_1(x))v-(r_\ast\circ\a_1(y))\b_1^{-1}\b_2\circ(r_\ast\circ\a_1^{2}\a_2^{-1}(x))\b_1^{-1}\b_2(v)\\
=&l_\ast(\a_1\a_2(x))\circ l_\ast(y)v-(r_\ast\circ\a_1^{2}(x))\circ l_\ast(\a_1^{-1}\a_2(y))\b_1^{-1}\b_2(v)\\
&-l_\ast(\a_1\a_2(x))\circ(r_\ast\circ\a_1\a_2^{-1}(y))\b_1^{-1}\b_2(v)+(r_\ast\circ\a_1^{2}(x))\circ(r_\ast(y))\b_1^{-2}\b_2^{2}(v)\\
&-l_\ast(\a_2(y))\circ l_\ast(\a_1(x))v+l_\ast(\a_2(y))(r_\ast\circ\a_1^{2}\a_2^{-1}(x))\b_1^{-1}\b_2(v)\\
&+(r_\ast\circ\a_1(y))\circ l_\ast(\a_2(x))\b_1^{-1}\b_2(v)-(r_\ast\circ\a_1(y))\circ(r_\ast\circ\a_1(x))\b_1^{-2}\b_2^{2}(v)\\
=&\Big(l_\ast(\a_1\a_2(x))\circ l_\ast(y)v-l_\ast(\a_2(y))\circ l_\ast(\a_1(x))v\Big)\\
&+\Big(-l_\ast(\a_1\a_2(x))\circ(r_\ast\circ\a_1\a_2^{-1}(y))\b_1^{-1}\b_2(v)+(r_\ast\circ\a_1(y))\circ l_\ast(\a_2(x))\b_1^{-1}\b_2(v)\\&-(r_\ast\circ\a_1(y))\circ(r_\ast\circ\a_1(x))\b_1^{-2}\b_2^{2}(v)\Big)
-\Big( (r_\ast\circ\a_1^{2}(x))\circ l_\ast(\a_1^{-1}\a_2(y))\b_1^{-1}\b_2(v)\\&-(r_\ast\circ\a_1^{2}(x))\circ(r_\ast(y))\b_1^{-2}\b_2^{2}(v)-l_\ast(\a_2(y))(r_\ast\circ\a_1^{2}\a_2^{-1}(x))\b_1^{-1}\b_2(v)\Big)\\
=&l_\ast(\{\a_2(x),y\})\b_2(v)-r_\ast(\a_1(x)\ast\a_1\a_2^{-1}(y))\b_1^{-1}\b_2^{2}(v)+r_\ast(y\ast\a_1^{2}\a_2^{-1}(x))\b_1^{-1}\b_2^{2}(v)\\
=&l_\ast(\{\a_2(x),y\})\b_2(v)-r_\ast(\a_1(x)\ast\a_1\a_2^{-1}(y)-y\ast\a_1^{2}\a_2^{-1}(x))\b_1^{-1}\b_2^{2}(v)\\
=&l_\ast(\{\a_2(x),y\})\b_2(v)-r_\ast(\{\a_1(x),\a_1\a_2^{-1}(y)\})\b_1^{-1}\b_2^{2}(v)\\
=&\rho(\{\a_2(x),y\})\b_2(v).
\end{align*}
Therefore, (\ref{repLie1})-(\ref{repLie3}) are satisfied.
\end{proof}

Now, we introduce the notion of matched pair of BiHom-pre-Lie algebra:
\begin{thm}
Let $(A,\ast_A,\alpha_1,\alpha_2)$ and $(B,\ast_{B},\beta_1,\beta_2)$ be two  BiHom-pre-Lie algebras. Suppose that there are linear maps $l_{\ast_A},r_{\ast_A}:A\rightarrow gl(B)$
and $l_{\ast_B},r_{\ast_B}:B\rightarrow gl(A)$ such that $(l_{\ast_A},r_{\ast_A},\beta_1,\beta_2,B)$ is a bimodule of $A$ and $(l_{\ast_B},r_{\ast_B},\alpha_1,\alpha_2,A)$ is a bimodule of $B$ satisfy for any, $x,y\in A,~a,b\in B$:
\begin{eqnarray}\label{Lie11}
    &&r_{\ast_A}(\a_2(x))\{\b_2(a),\b_1(b)\}_B=r_{\ast_A}(l_{\ast_B}(\b_1(b))x)\b_1\b_2(a)\nonumber\\&&-r_{\ast_A}(l_{\ast_B}(\b_1(a)x)\b_1\b_2(b)+\b_1\b_2(a)\ast_Br_{\ast_A}(x)\b_1(b)\nonumber\\&&-\b_1\b_2(b)\ast_B r_{\ast_A}(x)\b_1(a),\label{Lie11}\\
    &&l_{\ast_A}(\a_1\a_2(x))(\b_1(a)\ast_B b)=(\rho_A(\a_2(x))\b_1(a))\ast_B\b_{2}(b)\nonumber\\&&-l_{\ast_A}(\rho_B(\b_2(a))\a_1(x))\b_2(b)+\b_1\b_2(a)\ast_B(l_{\ast_A}(\a_1(x))b)\nonumber\\&&+r_{\ast_A}(r_{\ast_B}(b)\a_1(x))\b_1\b_2(a),\label{Lie12}\\
      &&r_{\ast_B}(\b_2(a))\{\a_2(x),\a_1(y)\}_A=r_{\ast_B}(l_{\ast_A}(\a_1(y))a)\a_1\a_2(x)\nonumber\\&&-r_{\ast_B}(l_{\ast_A}(\a_1(x)a)\a_1\a_2(y)+\a_1\a_2(x)\ast_A r_{\ast_B}(a)\a_1(y)\nonumber\\&&-\a_1\a_2(y)\ast_A r_{\ast_B}(a)\a_1(x),\label{Lie13}\\
    &&l_{\ast_B}(\b_1\b_2(a))(\a_1(x)\ast_A y)=(\rho_B(\b_2(a))\a_1(x))\ast_A\a_{2}(y)\nonumber\\&&-l_{\ast_B}(\rho_A(\a_2(x))\b_1(a))\a_2(y)+\a_1\a_2(x)\ast_A(l_{\ast_B}(\b_1(a))y)\nonumber\\&&+r_{\ast_B}(r_{\ast_A}(y)\b_1(a))\a_1\a_2(x),\label{Lie14}
                \end{eqnarray}
where
\begin{align*}\{\a_2(x),\a_1(y)\}_A&=&\a_2(x)\ast_A\a_1(y)-\a_2(y)\ast_A\a_1(x),~~(\rho_A\circ\a_2)\b_1&=&(l_{\ast_A}\circ\a_2)\b_1-(r_{\ast_A}\circ\a_1)\b_2,\\
\{\b_2(a),\b_1(b)\}_B&=&\b_2(a)\ast_B\b_1(b)-\b_2(b)\ast_B\b_1(a),~~(\rho_B\circ\b_2)\a_1&=&(l_{\ast_B}\circ\b_2)\a_1-(r_{\ast_B}\circ\b_1)\a_2.
\end{align*}
Then $(A,B,l_{\ast_A},r_{\ast_A},\beta_1,\beta_2,l_{\ast_B},r_{\ast_B},\alpha_1,\alpha_2)$ is called a matched pair of
BiHom-pre-Lie algebras. In this case, there exists a BiHom-pre-Lie algebra structure on the vector
space $A\oplus B$ of the underlying vector spaces of $A$ and $B$ given by
$$\begin{array}{llllll}
(x + a) \ast (y + b)&:=&x \ast_A y + (l_{\ast_A}(x)b + r_{\ast_A}(y)a)+a \ast_B b + (l_{\ast_B}(a)y + r_{\ast_B}(b)x),\cr
(\alpha_{1}\oplus\beta_{1})(x + a)&:=&\alpha_{1}(x) + \beta_{1}(a),\cr (\alpha_{2}\oplus\beta_{2})(x + a)&:=&\alpha_{2}(x) + \beta_{2}(a).
\end{array}$$
\end{thm}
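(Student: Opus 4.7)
The plan is to follow the same template used in the proofs of Theorem \ref{matched ass} and Theorem \ref{matched Lie}: check componentwise that the twisting maps $\alpha_1\oplus\beta_1$ and $\alpha_2\oplus\beta_2$ commute and are multiplicative with respect to $\ast$, and then verify the BiHom-pre-Lie identity on $A\oplus B$. The commutativity $(\alpha_1\oplus\beta_1)(\alpha_2\oplus\beta_2)=(\alpha_2\oplus\beta_2)(\alpha_1\oplus\beta_1)$ is immediate from $\alpha_1\alpha_2=\alpha_2\alpha_1$ and $\beta_1\beta_2=\beta_2\beta_1$. The multiplicativity of $\alpha_i\oplus\beta_i$ with respect to $\ast$ splits into four pieces: the pure $A$-part uses multiplicativity of $\alpha_i$ over $\ast_A$, the pure $B$-part uses multiplicativity of $\beta_i$ over $\ast_B$, and the two mixed pieces $l_{\ast_A}(x)b$ and $r_{\ast_A}(y)a$ (resp.\ $l_{\ast_B}(a)y$, $r_{\ast_B}(b)x$) follow from the last four bimodule axioms (the $\beta_i$-equivariance of $l_{\ast_A}, r_{\ast_A}$ and the analogous $\alpha_i$-equivariance for $l_{\ast_B}, r_{\ast_B}$).

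For the BiHom-pre-Lie identity, I would fix $X=x+a$, $Y=y+b$, $Z=z+c$ and expand
\[
(\alpha_2\oplus\beta_2)(X)\ast(\alpha_1\oplus\beta_1)(Y)\ast(\alpha_2\oplus\beta_2)(Z)-(\alpha_1\alpha_2\oplus\beta_1\beta_2)(X)\ast\bigl((\alpha_1\oplus\beta_1)(Y)\ast Z\bigr)
\]
and similarly with $X$ and $Y$ interchanged. I would then collect all resulting terms and sort them according to whether their output lies in $A$ or in $B$, so the verification separates into an $A$-identity and a $B$-identity. By the symmetry of the ansatz, it suffices to treat the $A$-side; the $B$-side is identical after swapping $(A,\ast_A,\alpha_i,l_{\ast_A},r_{\ast_A})\leftrightarrow (B,\ast_B,\beta_i,l_{\ast_B},r_{\ast_B})$.

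On the $A$-side, I would group the terms into four families according to which of $x,y,z,a,b,c$ appear at most linearly: (i) terms trilinear in $x,y,z$ (no elements of $B$), which cancel using the BiHom-pre-Lie identity for $(A,\ast_A,\alpha_1,\alpha_2)$; (ii) terms linear in one of $a,b,c$ and bilinear in the two remaining elements of $A$, which cancel by the bimodule axioms of $(l_{\ast_B},r_{\ast_B},\alpha_1,\alpha_2,A)$ and the subsidiary identity $(\rho_B\circ\beta_2)\alpha_1=(l_{\ast_B}\circ\beta_2)\alpha_1-(r_{\ast_B}\circ\beta_1)\alpha_2$; (iii) terms linear in some element of $A$ and bilinear in elements of $B$, which cancel by the compatibility relations (\ref{Lie13}) and (\ref{Lie14}) after suitable substitution; and (iv) trilinear terms in $a,b,c$, which do not appear in the $A$-component. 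The $B$-side is handled identically with (\ref{Lie11}) and (\ref{Lie12}) playing the role of (\ref{Lie13}) and (\ref{Lie14}).

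The main obstacle is purely combinatorial: the expansion on each side produces on the order of twenty terms per monomial, and a careful bookkeeping argument is needed to match each ``cross'' term with the appropriate ingredient among the pre-Lie identities, the bimodule axioms (including the two non-trivial ones (7.X) and (7.X+1) that mix $\rho$ with $l_\ast,r_\ast$), and the four compatibility conditions. The four compatibility conditions (\ref{Lie11})--(\ref{Lie14}) have been designed precisely so that, after swapping the role of $x$ and $y$ on the right-hand side of the BiHom-pre-Lie identity, the cross contributions cancel in pairs, leaving only the intrinsic pre-Lie identities on $A$ and $B$. Once the bookkeeping is laid out, each resulting identity is a rewriting of one of the hypotheses, and the theorem follows.
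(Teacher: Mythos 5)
Your proposal is correct and follows essentially the same route as the paper, whose own proof consists of a one-line deferral to the direct-expansion method of the matched pair theorem for BiHom-associative algebras: expand both sides of the BiHom-pre-Lie identity on $A\oplus B$, split by $A$- and $B$-components, and cancel using the pre-Lie identities, the bimodule axioms, and the four compatibility conditions. Your bookkeeping by multilinearity type (and the observation that no term trilinear in elements of $B$ can land in the $A$-component) is exactly the organization the paper leaves implicit.
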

\begin{proof}
The proof is obtained in a similar way as for Theorem \ref{matched ass}.
\end{proof}
We denote this BiHom-pre-Lie algebra by $A\bowtie^{l_{\ast_A},r_{\ast_A},\beta_1,\beta_2}_{l_{\ast_B},r_{\ast_B},\alpha_1,\alpha_2}B$.\\
\begin{prop}\label{Matchedd1}
Let $(A, B, l_{\ast_{A}}, r_{\ast_{A}}, \beta_1,\b_2,
l_{\ast_{B}}, r_{\ast_{B}}, \alpha_1,\a_2) $ be a matched pair of regular BiHom-pre-Lie algebras $(A,\ast_A, \alpha_1,\a_2)$ and $(B, \ast_B,\beta_1,\b_2)$.
Then, $(A, B, l_{\ast_{A}}-(r_{\ast_{A}}\circ\a_1\a_2^{-1})\b_1^{-1}\b_2,\\\b_1,\b_2 ,l_{\ast_{B}}-(r_{\ast_{B}}\circ\b_1\b_2^{-1})\a_1^{-1}\a_2,\a_1,\a_2)$ is a matched pair of the associated
BiHom-Lie algebras $(A,\{\cdot,\cdot\}_A, \alpha_1,\a_2)$ and $(B,\{\cdot,\cdot\}_B,\beta_1,\b_2).$
\end{prop}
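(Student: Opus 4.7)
The plan is to reduce the statement to two ingredients already available: Proposition \ref{propa1}, which upgrades each pre-Lie bimodule to a Lie representation, and the matched pair axioms (\ref{Lie11})--(\ref{Lie14}) for BiHom-pre-Lie algebras, from which we must derive the Lie matched pair axioms (\ref{Lie1})--(\ref{Lie2}). First I would apply Lemma \ref{lem1} to $(A,\ast_A,\alpha_1,\alpha_2)$ and $(B,\ast_B,\beta_1,\beta_2)$ to produce the subadjacent regular BiHom-Lie algebras $(A,\{\cdot,\cdot\}_A,\alpha_1,\alpha_2)$ and $(B,\{\cdot,\cdot\}_B,\beta_1,\beta_2)$. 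Then Proposition \ref{propa1} applied to the bimodule $(l_{\ast_A},r_{\ast_A},\beta_1,\beta_2,B)$ shows that the map
\[
\rho_A := l_{\ast_A}-(r_{\ast_A}\circ\alpha_1\alpha_2^{-1})\beta_1^{-1}\beta_2 : A\to \gl(B)
\]
is a representation of the BiHom-Lie algebra $(A,\{\cdot,\cdot\}_A,\alpha_1,\alpha_2)$ on $(B,\beta_1,\beta_2)$, and symmetrically for $\rho_B := l_{\ast_B}-(r_{\ast_B}\circ\beta_1\beta_2^{-1})\alpha_1^{-1}\alpha_2$. This disposes of the representation axioms (\ref{repLie1})--(\ref{repLie3}).

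The substance of the proof is the verification of the two compatibility conditions (\ref{Lie1}) and (\ref{Lie2}) of Theorem \ref{matched Lie} for the pair $(\rho_A,\rho_B)$. By the symmetry $A\leftrightarrow B$, it suffices to do (\ref{Lie1}) and transport the argument. I would substitute the explicit formulas
\[
\rho_A(x)b = l_{\ast_A}(x)b - r_{\ast_A}(\alpha_1\alpha_2^{-1}(x))\beta_1^{-1}\beta_2(b),\qquad
\rho_B(a)y = l_{\ast_B}(a)y - r_{\ast_B}(\beta_1\beta_2^{-1}(a))\alpha_1^{-1}\alpha_2(y),
\]
expand $\{\alpha_1(x),\alpha_1^{2}(y)\}_A = \alpha_1(x)\ast_A \alpha_1^{2}(y) - \alpha_2(y)\ast_A \alpha_1^{2}\alpha_2^{-1}(x)$ on the left, and regroup the terms on the right into four blocks: two ``$l_{\ast_B}$-blocks'' produced by the $l_{\ast_B}$-part of each $\rho_B$, and two ``$r_{\ast_B}$-blocks''. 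Each block should collapse by a single application of one of (\ref{Lie11})--(\ref{Lie14}), together with the BiHom-pre-Lie identity (\ref{klkl}) and the multiplicativity of $\alpha_i,\beta_j$.

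The main obstacle is bookkeeping rather than insight: one must track the $\alpha_i^{\pm 1},\beta_j^{\pm 1}$ twists carefully when moving between $l_{\ast_A},r_{\ast_A}$ and $\rho_A$, and similarly for $B$, and then realign arguments in (\ref{Lie11})--(\ref{Lie14}) using BiHom-multiplicativity so that the relevant equalities apply verbatim. I expect the cleanest presentation to exploit the identity $(\rho_A\circ\alpha_2)\beta_1 = (l_{\ast_A}\circ\alpha_2)\beta_1 - (r_{\ast_A}\circ\alpha_1)\beta_2$ (and its $B$-counterpart) stated just after the bimodule definition, which serves as a substitution rule to convert mixed $l_{\ast_A},r_{\ast_A}$ terms arising from (\ref{Lie12}) and (\ref{Lie14}) into $\rho_A$-terms. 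Once both (\ref{Lie1}) and its $B$-analogue are verified, Theorem \ref{matched Lie} concludes that $(A,B,\rho_A,\beta_1,\beta_2,\rho_B,\alpha_1,\alpha_2)$ is the desired matched pair of BiHom-Lie algebras.
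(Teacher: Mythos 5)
Your proposal follows essentially the same route as the paper: invoke Proposition \ref{propa1} to upgrade the two pre-Lie bimodules to representations $\rho_A$ and $\rho_B$ of the subadjacent BiHom-Lie algebras, and then reduce the Lie matched pair conditions (\ref{Lie1})--(\ref{Lie2}) to the pre-Lie matched pair conditions (\ref{Lie11})--(\ref{Lie14}). The paper in fact simply asserts this last equivalence without the substitution-and-regrouping bookkeeping you outline, so your sketch is, if anything, more explicit about the step that actually carries the content.
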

\begin{proof}
Let $(A, B, l_{\ast_{A}}, r_{\ast_{A}}, \beta_1,\b_2,
l_{\ast_{B}}, r_{\ast_{B}}, \alpha_1,\a_2) $ be a matched pair of regular BiHom-pre-Lie algebras $(A,\ast_A, \alpha_1,\a_2)$ and $(B, \ast_B,\beta_1,\b_2)$. In view of Proposition \ref{propa1}, the linear maps $l_{\ast_{A}}-(r_{\ast_{A}}\circ\a_1\a_2^{-1})\b_1^{-1}\b_2:A\rightarrow gl(B)$ and  $l_{\ast_{B}}-(r_{\ast_{B}}\circ\a_1\a_2^{-1})\b_1^{-1}\b_2:B\rightarrow gl(A)$ are representations of the underlying BiHom-Lie algebras $(A,\{\cdot,\cdot\}_A, \alpha_1,\a_2)$ and $(B,\{\cdot,\cdot\}_B,\beta_1,\b_2)$, respectively. Therefore, (\ref{Lie1}) is equivalent to (\ref{Lie11})-(\ref{Lie12}) and (\ref{Lie2}) is equivalent to (\ref{Lie13})-(\ref{Lie14}).
\end{proof}
Now, we recall the definition of BiHom-dendriform algebra \cite{LiuMakhloufMeninPanaite} and their notions of bimodule and matched pair given in \cite{double}.
\begin{defn}
A BiHom-dendriform algebra is a quintuple $(A, \prec, \succ,
\alpha_1,\a_2)$ consisting of a vector space $A$ on which the
operations $\prec, \succ: A\otimes A\rightarrow
A,$ and $\alpha_1,\a_2: A\rightarrow A$ are
linear maps such that $\a_1\circ\a_2=\a_2\circ\a_1$ and for all $x,y,z\in A$ the following equalities
are satisfied:
\begin{eqnarray*}
\a_1(x \prec y)&=&\a_1(x)\prec\a_1(y)\cr
\a_2(x \prec y)&=&\a_2(x)\prec\a_2(y)\cr
\a_1(x \succ y)&=&\a_1(x)\succ\a_1(y)\cr
\a_2(x \succ y)&=&\a_2(x)\succ\a_2(y)\cr
(x \prec y)\prec \alpha_2(z) &=& \alpha_1(x)\prec (y \cdot z),\cr (x
\succ y) \prec\alpha_2(z) &=& \alpha_1(x) \succ (y \prec z),\cr
\alpha_1(x)\succ (y \succ z ) &=& ( x \cdot y) \succ \alpha_2(z),
\end{eqnarray*}
where
\begin{eqnarray}\label{associative-dendriform}
x \cdot y = x \prec y + x \succ y.
\end{eqnarray}
\end{defn}
\begin{lem}\label{lem2}\cite{LiuMakhloufMeninPanaite}
Let $(A, \prec, \succ, \alpha_1,\a_2)$ be a BiHom-dendriform
algebra. Then, $(A, \cdot:=\prec+\succ, \alpha_1,\a_2)$ is a BiHom-associative
algebra.
\end{lem}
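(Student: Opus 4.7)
The plan is to verify directly that the induced multiplication $x \cdot y := x \prec y + x \succ y$ satisfies the three axioms of a BiHom-associative algebra \eqref{aca0.0}, \eqref{aca0.00} and \eqref{aca}; the commutation relation $\a_1 \a_2 = \a_2 \a_1$ from \eqref{aca0} is already part of the datum of the BiHom-dendriform algebra. The BiHom-multiplicativity conditions $\a_i(x \cdot y) = \a_i(x) \cdot \a_i(y)$ for $i = 1, 2$ follow immediately by linearity from the corresponding multiplicativity of $\prec$ and $\succ$ with respect to both $\a_1$ and $\a_2$.

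The heart of the argument is BiHom-associativity. I would expand each side of the desired equality
\[
\a_1(x) \cdot (y \cdot z) = (x \cdot y) \cdot \a_2(z)
\]
into four terms by bilinearity of $\cdot$. The left-hand side becomes $\a_1(x) \prec (y \prec z) + \a_1(x) \prec (y \succ z) + \a_1(x) \succ (y \prec z) + \a_1(x) \succ (y \succ z)$, and the right-hand side becomes $(x \prec y) \prec \a_2(z) + (x \prec y) \succ \a_2(z) + (x \succ y) \prec \a_2(z) + (x \succ y) \succ \a_2(z)$. The three BiHom-dendriform axioms then match these eight terms pairwise: the first axiom, together with \eqref{associative-dendriform}, gives
\[
(x \prec y) \prec \a_2(z) = \a_1(x) \prec (y \prec z) + \a_1(x) \prec (y \succ z);
\]
the second axiom gives $(x \succ y) \prec \a_2(z) = \a_1(x) \succ (y \prec z)$; and the third axiom, again via \eqref{associative-dendriform}, yields
\[
\a_1(x) \succ (y \succ z) = (x \prec y) \succ \a_2(z) + (x \succ y) \succ \a_2(z).
\]
Adding these three identities produces exactly the desired equality.

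I do not expect any real obstacle: the three defining axioms of a BiHom-dendriform algebra are precisely designed so that their sum, after using the splitting $\cdot = \prec + \succ$, reproduces BiHom-associativity. The only bookkeeping to watch is that every one of the eight expanded terms is accounted for exactly once when the three axioms are combined, which the case analysis above makes transparent.
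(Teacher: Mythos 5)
Your proof is correct and complete: the three BiHom-dendriform axioms sum exactly to the BiHom-associativity of $\cdot=\prec+\succ$, with all eight expanded terms on the two sides matched up exactly once, and the multiplicativity and commutation conditions carried over by linearity. The paper states this lemma without proof, citing \cite{LiuMakhloufMeninPanaite}; your direct verification is the standard argument and is exactly what that reference does.
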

\begin{defn}
Let $(A, \prec, \succ, \alpha_{1}, \alpha_{2})$ be a BiHom-dendriform algebra, and $V$ be a vector space.
Let $l_{\prec}, r_{\prec}, l_{\succ}, r_{\succ} : A \rightarrow gl(V),$ and $\beta_{1}, \beta_{2}: V \rightarrow V$ be six linear maps. Then, ($ l_{\prec}, r_{\prec}, l_{\succ}, r_{\succ}, \beta_{1}, \beta_{2}, V$) is called a bimodule of $ A $
 if the following equations hold for any $ x, y \in A $ and $v\in V$:
\begin{eqnarray}
l_{\prec}(x \prec y)\beta_{2}(v)&=& l_{\prec}(\alpha_{1}(x))l_{\cdot}(y)v,\\ r_{\prec}(\alpha_{2}(x))l_{\prec}(y)v&=&l_{\prec}(\alpha_{1}(y))r_{\cdot}(x)v,\\
r_{\prec}(\alpha_{2}(y))r_{\prec}(y)v &=& r_{\prec}(x\cdot y)\beta_{1}(v),\\ l_{\prec}(x \succ y)\beta_{2}(v) &=& l_{\succ}(\alpha_{1}(x))l_{\prec}(y)v,\\
r_{\prec}(\alpha_{2}(x))l_{\succ}(y)v &=& l_{\succ}(\alpha_{1}(y))r_{\prec}(x)v,\\ r_{\prec}(\alpha_{2}(x))r_{\succ}(y)v &=& r_{\succ}(y\prec x)\beta_{1}(v),\\
l_{\succ}(x\cdot y)\beta_{2}(v) &=& l_{\succ}(\alpha_{1}(x))l_{\succ}(y)v,\\ r_{\succ}(\alpha_{2}(x))l_{\cdot}(y)v&=& l_{\succ}(\alpha_{1}(y))r_{\succ}(x)v,\\
r_{\succ}(\alpha_{2}(x))r_{\cdot}(y)v &=& r_{\succ}(y \succ x)\beta_{1}(v),\\
\beta_{1}(l_\prec(x)v)&=& l_\prec(\alpha_{1}(x))\beta_{1}(v),\\ \beta_{1}(r_\prec(x)v)&=& r_\prec(\alpha_{1}(x))\beta_{1}(v),\\
\beta_{2}(l_\prec(x)v) &=& l_\prec(\alpha_{2}(x))\beta_{2}(v),\\\beta_{2}(r_\prec(x)v)&=& r_\prec(\alpha_{2}(x))\beta_{2}(v)\\
\beta_{1}(l_\succ(x)v)&=& l_\succ(\alpha_{1}(x))\beta_{1}(v),\\ \beta_{1}(r_\succ(x)v)&=& r_\succ(\alpha_{1}(x))\beta_{1}(v),\\
\beta_{2}(l_\succ(x)v) &=& l_\succ(\alpha_{2}(x))\beta_{2}(v),\\\beta_{2}(r_\succ(x)v)&=& r_\succ(\alpha_{2}(x))\beta_{2}(v)
\end{eqnarray}
where $x\cdot y=x\prec y+x\succ y,~l_\cdot=l_\prec+l_\succ$ and $r_\cdot=r_\prec+r_\succ.$
\end{defn}
\begin{prop}
Let $(l_{\prec}, r_{\prec}, l_{\succ}, r_{\succ}, \beta_{1}, \beta_{2}, V)$ be a bimodule of a BiHom-dendriform algebra $(A,\prec, \succ, \alpha_{1}, \alpha_{2}).$ Then, there exists a BiHom-dendriform algebra structure on the direct sum $A\oplus V $ of the underlying vector spaces of $ A $ and $V$ given by
\begin{eqnarray*}
(x + u) \prec'(y + v) &:=& x \prec y + l_{\prec}(x)v + r_{\prec}(y)u\cr
(x + u) \succ' (y + v) &:=& x \succ y + l_{\succ}(x)v + r_{\succ}(y)u, \cr
(\alpha_1\oplus\beta_1)(x+a)&:=&\alpha_1(x)+\beta_1(a)\cr
(\alpha_2\oplus\beta_2)(x+a)&:=&\alpha_2(x)+\beta_2(a)
\end{eqnarray*}
for all $ x, y \in A, u, v \in V $. We denote it by $ A \times_{l_{\prec},r_{\prec}, l_{\succ}, r_{\succ}, \alpha_{1}, \alpha_{2}, \beta_{1}, \beta_{2}} V$.
\end{prop}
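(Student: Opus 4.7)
The plan is to mimic verbatim the strategy used for the bimodule-to-semi-direct product constructions in Proposition~\ref{ass1} and the BiHom-pre-Lie analogue above it: verify BiHom-multiplicativity first, then expand each of the three BiHom-dendriform identities on a generic triple $(x_1+u_1,x_2+u_2,x_3+u_3)\in (A\oplus V)^{\otimes 3}$, separate the contributions lying in $A$ from those lying in $V$, and check that the $A$-parts cancel by the axioms of $(A,\prec,\succ,\alpha_1,\alpha_2)$ while the $V$-parts cancel by the defining equations of the bimodule $(l_\prec,r_\prec,l_\succ,r_\succ,\beta_1,\beta_2,V)$.

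First, I would verify that $\alpha_1\oplus\beta_1$ and $\alpha_2\oplus\beta_2$ commute and are multiplicative for both $\prec'$ and $\succ'$. Commutativity reduces to $\alpha_1\alpha_2=\alpha_2\alpha_1$ on $A$ (known) and $\beta_1\beta_2=\beta_2\beta_1$ on $V$ (part of being a BiHom-module). Multiplicativity of $\alpha_i\oplus\beta_i$ on $\prec'$ follows from $\alpha_i(x\prec y)=\alpha_i(x)\prec\alpha_i(y)$ together with the four identities $\beta_i(l_\prec(x)v)=l_\prec(\alpha_i(x))\beta_i(v)$ and $\beta_i(r_\prec(x)v)=r_\prec(\alpha_i(x))\beta_i(v)$ already listed in the bimodule axioms; the $\succ'$ case is identical.

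Next, I would check the three BiHom-dendriform axioms one by one. For $(x_1+u_1)\prec'(x_2+u_2))\prec'(\alpha_2+\beta_2)(x_3+u_3)=(\alpha_1+\beta_1)(x_1+u_1)\prec'((x_2+u_2)\cdot'(x_3+u_3))$, where $\cdot'=\prec'+\succ'$, the $A$-component is exactly the first axiom of the BiHom-dendriform algebra $A$. The $V$-component decomposes into three pieces according to which of $u_1,u_2,u_3$ is present: the piece with $u_3$ gives $l_\prec(x_1\prec x_2)\beta_2(u_3)=l_\prec(\alpha_1(x_1))l_\cdot(x_2)u_3$, the piece with $u_2$ gives $r_\prec(\alpha_2(x_3))l_\prec(x_2)u_2=l_\prec(\alpha_1(x_2))r_\cdot(x_3)u_2$, and the piece with $u_1$ gives $r_\prec(\alpha_2(x_3))r_\prec(x_2)u_1=r_\prec(x_2\cdot x_3)\beta_1(u_1)$; these are precisely the first three bimodule identities. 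The second BiHom-dendriform axiom, $(x\succ y)\prec'\alpha_2(z)=\alpha_1(x)\succ'(y\prec z)$, peels off in the same way into the three identities indexed by which argument carries the $V$-component, matching exactly the next three bimodule equations, and likewise for the third axiom, whose three component equations correspond to the remaining three $l_\succ/r_\succ/l_\cdot/r_\cdot$ bimodule identities.

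The main obstacle here is purely bookkeeping: one must write the new operations $\prec',\succ'$ with their $l_\prec,r_\prec,l_\succ,r_\succ$ terms in the right positions, and then, for each of the three axioms, correctly identify which of the three mixed cases (one $V$-component sitting in the first, second, or third slot) produces which of the nine listed bimodule identities. Once this correspondence is set up the verification is mechanical, and no further structural input beyond Lemma~\ref{lem2} and the bimodule axioms is needed.
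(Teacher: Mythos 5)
The paper states this proposition without any proof, so there is nothing to compare against directly; your plan is the correct routine verification and is exactly the strategy the paper itself uses for the analogous semidirect-product results (Proposition \ref{ass1}, Theorem \ref{matched ass}), with the nine ``action'' bimodule identities matching one-for-one the nine mixed $V$-components of the three dendriform axioms. Two small points: in the $u_2$-piece of the first axiom the instance that actually arises is $r_\prec(\alpha_2(x_3))l_\prec(x_1)u_2=l_\prec(\alpha_1(x_1))r_\cdot(x_3)u_2$ (your $x_2$ should be $x_1$), and the commutation $\beta_1\beta_2=\beta_2\beta_1$ that you invoke for the BiHom-module condition on $A\oplus V$ is not explicitly listed in the paper's definition of a dendriform bimodule, though it is clearly intended and should be flagged as an implicit hypothesis.
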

\begin{prop}\label{propa2}\cite{double}
Let ($ l_{\prec}, r_{\prec}, l_{\succ}, r_{\succ},
 \beta_1,\b_2, V$) be a bimodule of  BiHom-dendriform algebra
$(A, \prec, \succ, \alpha_1,\a_2)$. Let $(A, \cdot=\prec+\succ, \alpha_1,\a_2)$ be the BiHom
associative algebra. Then ($l_{\prec}+l_{\succ},
r_{\prec}+r_{\succ},\b_1,\b_2,V$) is a
bimodule of $(A, \cdot, \alpha_1,\a_2)$.
\end{prop}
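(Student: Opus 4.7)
The plan is a direct verification that the quintuple $(l := l_\prec + l_\succ,\; r := r_\prec + r_\succ,\; \beta_1,\beta_2,\; V)$ satisfies the seven bimodule axioms for the BiHom-associative algebra $(A,\cdot,\alpha_1,\alpha_2)$ from Lemma \ref{lem2}, using the seventeen dendriform-bimodule axioms together with $x\cdot y = x\prec y + x\succ y$ and the linearity of each of $l_\prec,l_\succ,r_\prec,r_\succ$.

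The first step handles the four BiHom-multiplicativity identities $\beta_i(l(x)v)=l(\alpha_i(x))\beta_i(v)$ and $\beta_i(r(x)v)=r(\alpha_i(x))\beta_i(v)$ for $i=1,2$: these follow immediately by adding the analogous identities already assumed for $l_\prec,l_\succ$ and for $r_\prec,r_\succ$, so no computation is required.

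The second and main step handles the three remaining axioms. For the left-action identity $l(x\cdot y)\beta_2(v)=l(\alpha_1(x))l(y)v$, I would expand the left-hand side, using linearity of $l_\succ$ on $x\cdot y = x\prec y + x\succ y$, as
\[
 l_\prec(x\prec y)\beta_2(v) \;+\; l_\prec(x\succ y)\beta_2(v) \;+\; l_\succ(x\cdot y)\beta_2(v),
\]
then rewrite each of the three summands using the corresponding dendriform-bimodule axiom:
\[
 l_\prec(\alpha_1(x))l_\cdot(y)v \;+\; l_\succ(\alpha_1(x))l_\prec(y)v \;+\; l_\succ(\alpha_1(x))l_\succ(y)v.
\]
Recalling $l_\cdot = l_\prec + l_\succ$, regrouping gives $(l_\prec+l_\succ)(\alpha_1(x))(l_\prec+l_\succ)(y)v = l(\alpha_1(x))l(y)v$, as required. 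The right-action identity $r(x\cdot y)\beta_1(v)=r(\alpha_2(y))r(x)v$ is treated symmetrically, expanding via linearity of $r_\prec$ this time and invoking the three axioms $r_\prec(x\cdot y)\beta_1(v)=r_\prec(\alpha_2(y))r_\prec(x)v$, $r_\succ(y\prec x)\beta_1(v)=r_\prec(\alpha_2(x))r_\succ(y)v$, and $r_\succ(y\succ x)\beta_1(v)=r_\succ(\alpha_2(x))r_\cdot(y)v$. The compatibility identity $l(\alpha_1(x))r(y)v = r(\alpha_2(y))l(x)v$ is obtained by expanding both sides into four summands each and matching them in pairs via the three mixed $l$--$r$ dendriform axioms.

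The main obstacle is purely bookkeeping: in each of the three cases, one must split the sums in exactly the right way so that each of the nine non-multiplicativity dendriform-bimodule axioms contributes once, without introducing spurious cross-terms. The pairing is forced by inspection of which of $\prec$ or $\succ$ appears on each side of a given axiom, so no new algebraic input beyond Lemma \ref{lem2} and linearity is needed, and the verification terminates routinely.
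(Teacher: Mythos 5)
Your proposal is correct and takes essentially the same route as the paper, which verifies only the left-action axiom $l(x\cdot y)\beta_2(v)=l(\alpha_1(x))l(y)v$ by exactly your three-term expansion and declares the remaining axioms similar. (One tiny bookkeeping slip: for the right-action identity it is the linearity of $r_\succ$, not $r_\prec$, that splits $r_\succ(x\cdot y)$ into its $\prec$ and $\succ$ pieces, the three axioms you cite then being the correct ones.)
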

\begin{proof}
We prove only the axiom (\ref{asss1}). The others being proved similarly. For any $x, y \in A$ and
$v \in V$ , we have
\begin{align*}
&(l_\prec+l_\succ)(x\cdot y)\b_2(v)\\=&(l_\prec +l_\succ)(x\prec y+x\succ y)\b_2(v)\\=&l_\prec(x\prec y)+l_\prec(x\succ y)\b_2(v)+l_\succ(x\cdot y)\b_2(v)\\
=&l_\prec(\a_1(x))(l_\prec+l_\succ)(y)v+l_\succ(\a_1(x))l_\prec(y)v+l_\succ(\a_1(x))l_\succ(y)v\\
=&l_\prec(\a_1(x))(l_\prec+l_\succ)(y)v+l_\succ(\a_1(x))(l_\prec+l_\succ)(y)v\\
=&(l_\prec+l_\succ)(\a_1(x))(l_\prec+l_\succ)(y)v.
\end{align*}
This finishes the proof.
\end{proof}
\begin{thm}
Let $(A,\prec_A,\succ_A,\alpha_1,\alpha_2)$ and $(B,\prec_B,\succ_{B},\beta_1,\beta_2)$ be two BiHom-dendriform algebras. Suppose that there are linear maps $l_{\prec_A},r_{\prec_A},l_{\succ_A},r_{\succ_A}:A\rightarrow gl(B)$
and $l_{\prec_B},r_{\prec_B},l_{\succ_B},r_{\succ_B}:B\rightarrow gl(A)$ such that for all $x,y\in A,~a,b\in B$, the following equalities hold:
\begin{eqnarray}
\label{bieq35}
r_{\prec_{A}}(\alpha_{2}(x))(a \prec_{B} b) = \beta_{1}(a)\prec_{B}( r_{A}(x)b) + r_{\prec_{A}}(l_{B}(x)\beta_{1}(a)),
\\
\label{bieq36}
\begin{array}{l}
l_{\prec_{A}}(l_{\prec_{B}}(x))\beta_{2}(b) + (r_{\prec_{A}}(x)a) \prec_{B}\beta_{2}(b)= \cr
\beta_{1}(a) \prec_{B} (l_{\prec_{A}}(x)b) + r_{\prec_{A}}(r_{\prec_{B}}(b)x)\beta_{1}(a),
\end{array} \\
\label{bieq37}
l_{\prec_{A}}(\alpha_{1}(x))(a \ast_{B} b) = (l_{\prec_{A}}(x)a) \ast_{B} \beta_{2}(b) +
 l_{\prec_{A}}(r_{\prec_{A}}(a)x)\beta_{2}(b),
\end{eqnarray}
\begin{eqnarray} \label{bieq38}
r_{\prec_{A}}(\alpha_{2}(x))(a \succ_{B} b) = r_{\succ_{A}}(l_{\prec_{B}}(b)x)\beta_{1}(a) +
\beta_{1}(a)\succ_{B} (r_{\prec_{A}}(x)b), \\
\label{bieq39}
\begin{array}{ll}
l_{\prec_{A}}(l_{\succ_{B}}(a)x)\beta_{2}(b) & + (r_{\succ_{A}}(x)a) \prec_{B}\beta_{2}(b)=\cr
& \beta_{1}(a)\succ_{B} (l_{\prec_{A}}(x)b) + r_{\succ_{A}}(r_{\prec_{B}}(b)x)\beta_{1}(a)
\end{array} \\
\label{bieq40}
l_{\succ_{A}}(\alpha_{1}(x))(a \prec_{B} b) = ( l_{\succ_{A}}(x)a) \prec_{B}\beta_{2}(b) +
 l_{\prec_{A}}(r_{\succ_{B}}(a)x)\beta_{2}(b),
\\
\label{bieq41}
r_{\succ_{A}}(\alpha_{2}(x))(a \ast_{B} b)= \beta_{1}(a)\succ_{B} (r_{\succ_{A}}(x)b) +
 r_{\succ_{A}}(l_{\succ_{B}}(b)x)\beta_{1}(a),
\\
\label{bieq42}
\begin{array}{ll}
\beta_{1}(a)\succ_{B} (l_{\succ_{A}}(x)b) &+ r_{\succ_{A}}(r_{\succ_{B}}(b)x)\beta_{1}(a)=\cr
&l_{\succ_{A}}(l_{B}(a)x)\beta_{2}(b) + (r_{A}(x)a) \succ_{B}\beta_{2}(b),
\end{array} \\
\label{bieq43}
l_{\succ_{A}}(\alpha_{1}(x))(a \succ_{B} b) = (l_{A}(x)a) \succ_{B}\beta_{2}(b) + l_{\succ_{A}}(r_{B}(a)x)\beta_{2}(b),
\\
\label{bieq44}
r_{\prec_{B}}(\beta_{2}(a))(x \prec_{A} y) = \alpha_{1}(x)\prec_{A} (r_{B}(a)y) + r_{\prec_{B}}(l_{A}(y)a)\alpha_{1}(x),
\\
\label{bieq45}
\begin{array}{ll}
l_{\prec_{B}}(l_{\prec_{A}}(x)a)\alpha_{2}(y) &+ (r_{\prec_{B}}(a)x) \prec_{A}\alpha_{2}(y)=\cr
&\alpha_{1}(x)\prec_{A} (l_{B}(a)y) + r_{\prec_{B}}(r_{A}(y)a)\alpha_{1}(x),
\end{array} \\
\label{bieq46}
l_{\prec_{B}}(\beta_{1}(a))(x \ast_{A} y) = (l_{\prec_{B}}(a)x) \prec_{A}\alpha_{2}(y) +
l_{\prec_{B}}(r_{\prec_{A}}(x)a)\alpha_{2}(y),
\\
\label{bieq47}
r_{\prec_{B}}(\beta_{2}(a))(x \succ_{A} y) = r_{\succ_{B}}(l_{\prec_{B}}(y)a)\alpha_{1}(x) +
 \alpha_{1}(x)\succ_{A} (r_{\prec_{B}}(a)y),
\\
\label{bieq48}
\begin{array}{ll}
l_{\prec_{B}}(l_{\succ_{A}}(x)a)\alpha_{2}(y) &+ (r_{\succ_{B}}(a)x) \prec_{A}\alpha_{2}(y)=\cr
&\alpha_{1}(x)\succ_{A} (l_{\prec_{B}}(a)y) + r_{\succ_{B}}(r_{\prec_{A}}(y)a)\alpha_{1}(x),
\end{array} \\
\label{bieq49}
l_{\succ_{B}}(\beta_{1}(a))(x \prec_{A} y) = (l_{\succ_{B}}(a)x) \prec_{A}\alpha_{2}(y) +
l_{\prec_{B}}(r_{\succ_{A}}(x)a)\alpha_{2}(y),
\\
\label{bieq50}
 r_{\succ_{B}}(\beta_{2}(a))(x \ast_{A} y)= \alpha_{1}(x)\succ_{A} (r_{\succ_{B}}(a)y) +
r_{\succ_{B}}(l_{\succ_{A}}(y)a)\alpha_{1}(x),
\\
\label{bieq51}
\begin{array}{ll}
\alpha_{1}(x)\succ_{A} (l_{\succ_{B}}(a)y) & + r_{\succ_{B}}(r_{\succ_{A}}(y)a)\alpha_{1}(x)=\cr
& l_{\succ_{B}}(l_{A}(x)a)\alpha_{2}(y) + (r_{B}(a)x) \succ_{A}\alpha_{2}(y),
\end{array} \\
\label{bieq52}
l_{\succ_{B}}(\beta_{1}(a))(x \succ_{A} y) = (l_{B}(a)x) \succ_{A}\alpha_{2}(y) +
l_{\succ_{B}}(r_{A}(x)a)\alpha_{2}(y),
\end{eqnarray}
where \begin{align*}x\cdot_A y=x\prec_A y + x \succ_A y,~~~l_{\cdot_A} = l_{\prec_A} +
l_{\succ_A},~~~r_{\cdot_A} = r_{\prec_A} + r_{\succ_A},\\
a\cdot_B b= a\prec_B b + a \succ_B b,~~~l_{\cdot_B} = l_{\prec_B} +
l_{\succ_B},~~~r_{\cdot_B} = r_{\prec_B} + r_{\succ_B}.
\end{align*}
Then $(A,B,l_{\prec_A},r_{\prec_A},l_{\succ_A},r_{\succ_A},\beta_1,\beta_2,l_{\prec_B},r_{\prec_B},l_{\succ_B},r_{\succ_B},\alpha_1,\alpha_2)$ is called a matched pair of BiHom-dendriform algebras. In this case, there exists a BiHom-dendriform algebra structure on the direct sum
$A\oplus B$ of the underlying vector spaces of $A$ and $B$ given by
$$\begin{array}{llllll}
(x + a) \prec(y + b)&:=&x \prec_A y + (l_{\prec_A}(x)b + r_{\prec_A}(y)a)+a \prec_B b + (l_{\prec_B}(a)y + r_{\prec_B}(b)x),\cr
(x + a) \succ (y + b)&:=&x \succ_A y + (l_{\succ_A}(x)b + r_{\succ_A}(y)a)+a \succ_B b + (l_{\succ_B}(a)y + r_{\succ_B}(b)x),\cr
(\alpha_{1}\oplus\beta_{1})(x + a)&:=&\alpha_{1}(x) + \beta_{1}(a),\cr (\alpha_{2}\oplus\beta_{2})(x + a)&:=&\alpha_{2}(x) + \beta_{2}(a).
\end{array}$$
\end{thm}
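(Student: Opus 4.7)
The plan is to verify directly that the five BiHom-dendriform axioms hold on $A \oplus B$ equipped with the proposed operations, mimicking the strategy used in Theorem~\ref{matched ass} and Theorem~\ref{matched Lie}. No conceptual machinery beyond bookkeeping is required; the 18 compatibility conditions (\ref{bieq35})--(\ref{bieq52}) are calibrated exactly so that all mixed terms pair up and cancel.

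First I would check BiHom-multiplicativity of $\alpha_1 \oplus \beta_1$ and $\alpha_2 \oplus \beta_2$. The commutation $(\alpha_1 \oplus \beta_1)(\alpha_2 \oplus \beta_2)=(\alpha_2 \oplus \beta_2)(\alpha_1 \oplus \beta_1)$ reduces immediately to $\alpha_1\alpha_2=\alpha_2\alpha_1$ in $A$ and $\beta_1\beta_2=\beta_2\beta_1$ in $B$. For multiplicativity with respect to $\prec$ and $\succ$, I would expand $(\alpha_i \oplus \beta_i)\bigl((x+a) \prec (y+b)\bigr)$ into an $A$-part, a $B$-part, and four cross-terms; these cross-terms are handled by the intertwining relations of $\beta_i$ with $l_{\prec_A}, r_{\prec_A}, l_{\succ_A}, r_{\succ_A}$ and of $\alpha_i$ with $l_{\prec_B}, r_{\prec_B}, l_{\succ_B}, r_{\succ_B}$ built into the respective bimodule structures.

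Second, I would verify each of the three BiHom-dendriform identities in turn. Take arbitrary $x+a,\ y+b,\ z+c \in A \oplus B$ and expand, for instance, $((x+a) \prec (y+b)) \prec (\alpha_2(z)+\beta_2(c))$ and $(\alpha_1(x)+\beta_1(a)) \prec ((y+b) \cdot (z+c))$, where $\cdot = \prec + \succ$. The pure $A$-component vanishes by the first dendriform axiom in $A$, the pure $B$-component by the same axiom in $B$. The remaining cross-terms sort themselves, according to which generators from $A$ and from $B$ they involve, into groups that cancel in pairs by exactly one of the conditions (\ref{bieq35})--(\ref{bieq37}) and (\ref{bieq44})--(\ref{bieq46}). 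The identity $(x \succ y) \prec \alpha_2(z) = \alpha_1(x) \succ (y \prec z)$ is handled identically using (\ref{bieq38})--(\ref{bieq40}) and (\ref{bieq47})--(\ref{bieq49}), and the identity $\alpha_1(x) \succ (y \succ z) = (x \cdot y) \succ \alpha_2(z)$ by (\ref{bieq41})--(\ref{bieq43}) and (\ref{bieq50})--(\ref{bieq52}).

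The main obstacle is entirely combinatorial: each of the three identities expands to roughly sixteen cross-terms per side, and one must keep track of which compatibility equation matches which pair of cross-terms without miscopying an $\alpha_i$, $\beta_i$, $l$ or $r$. An efficient presentation would organize the argument into three blocks, one per axiom, and within each block pair every left-hand cross-term with its matching right-hand cross-term together with the specific relation among (\ref{bieq35})--(\ref{bieq52}) that forces their difference to be zero. Since the author already invokes this kind of verification as routine elsewhere in the paper, the natural write-up is to state the computation, indicate the groupings, and leave the expansion as a direct but tedious check, exactly as was done at the end of the proof of Theorem~\ref{matched ass}.
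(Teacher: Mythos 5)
Your outline is correct and coincides with the paper's approach: the paper in fact prints no proof of this theorem at all (the statement is followed immediately by the introduction of the notation for the resulting algebra on $A\oplus B$), and the proofs it does supply for the analogous Theorem~\ref{matched ass} and Theorem~\ref{matched Lie} are exactly the expand-and-cancel computations you describe, with the pure $A$- and $B$-components killed by the dendriform axioms in each factor and the mixed terms paired off by the conditions (\ref{bieq35})--(\ref{bieq52}). The one point you should make explicit in a write-up is that the multiplicativity checks and the cross-terms in which a single algebra acts twice on an element of the other require that $(l_{\prec_A},r_{\prec_A},l_{\succ_A},r_{\succ_A},\beta_1,\beta_2,B)$ be a bimodule of $A$ and $(l_{\prec_B},r_{\prec_B},l_{\succ_B},r_{\succ_B},\alpha_1,\alpha_2,A)$ a bimodule of $B$ --- a hypothesis you correctly invoke (``built into the respective bimodule structures'') but which the theorem as printed omits, so it must be added for the eighteen listed identities to suffice.
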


We denote this BiHom-dendriform algebra by $A\bowtie^{l_A,r_A,\beta_1,\beta_2}_{l_B,r_B,\alpha_1,\alpha_2}B$.\\
\begin{prop}\cite{double}\label{Matchedd2}
Let $(A, B, l_{\prec_{A}}, r_{\prec_{A}}, l_{\succ_{A}}, r_{\succ_{A}}, \beta_1,\b_2,
 l_{\prec_{B}}, r_{\prec_{B}}, l_{\succ_{B}}, r_{\succ_{B}}, \alpha_1,\a_2) $ be a matched pair of a BiHom-dendriform algebras $(A, \prec_{A}, \succ_{A}, \alpha_1,\a_2)$ and $(B, \prec_{B}, \succ_{B},\beta_1,\b_2)$.
Then, $(A, B, l_{\prec_{A}} + l_{\succ_{A}}, r_{\prec_{A}} + r_{\succ_{A}},\b_1,\b_2,
l_{\prec_{B}} + l_{\succ_{B}},  r_{\prec_{B}} + r_{\succ_{B}} ,\a_1,\a_2)$ is a matched pair of the associated
BiHom-associative algebras $(A,\cdot_{A}=\prec_A+\succ_A, \alpha_1,\a_2)$ and $(B, \cdot_{B}=\prec_B+\succ_B,\beta_1,\b_2)$.
\end{prop}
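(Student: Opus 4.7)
The plan is to reduce the claim to results already established earlier in the excerpt, organized in three layers. First, Lemma \ref{lem2} upgrades the two BiHom-dendriform algebras to BiHom-associative algebras $(A,\cdot_A,\alpha_1,\alpha_2)$ and $(B,\cdot_B,\beta_1,\beta_2)$ via $\cdot=\prec+\succ$. Second, Proposition \ref{propa2} applied twice --- once to $A$ acting on $B$, and once to $B$ acting on $A$ --- yields that the summed quintuples $(l_{\prec_A}+l_{\succ_A},\, r_{\prec_A}+r_{\succ_A},\, \beta_1,\beta_2,B)$ and $(l_{\prec_B}+l_{\succ_B},\, r_{\prec_B}+r_{\succ_B},\, \alpha_1,\alpha_2,A)$ are bimodules of the associated BiHom-associative algebras. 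These two inputs give for free the BiHom-associative and bimodule parts of the conclusion.

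With these ingredients in hand, the only remaining content is to verify the six matched-pair compatibilities (\ref{3})-(\ref{8}) for BiHom-associative algebras when the actions are taken to be $l_{\cdot_A}:=l_{\prec_A}+l_{\succ_A}$, $r_{\cdot_A}:=r_{\prec_A}+r_{\succ_A}$, and similarly $l_{\cdot_B}$, $r_{\cdot_B}$ on the other side. I would treat (\ref{3}) as a prototype: expand
\[
l_{\cdot_A}(\alpha_1(x))(a\cdot_B b)=\sum_{\bullet,\star\in\{\prec,\succ\}}l_{\bullet_A}(\alpha_1(x))(a\star_B b),
\]
and similarly split the right-hand side into four summands. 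Each of the four resulting dendriform pieces is controlled by a single identity drawn from the block (\ref{bieq35})-(\ref{bieq43}); reassembling via the definitions $l_{\cdot_A}=l_{\prec_A}+l_{\succ_A}$, $r_{\cdot_B}=r_{\prec_B}+r_{\succ_B}$, and $a\cdot_B b=a\prec_B b+a\succ_B b$ produces exactly (\ref{3}). Identities (\ref{4}) and (\ref{5}) are extracted from the same block (\ref{bieq35})-(\ref{bieq43}) by the same bookkeeping, and (\ref{6})-(\ref{8}) follow symmetrically from (\ref{bieq44})-(\ref{bieq52}) with the roles of $A$ and $B$ interchanged.

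The main obstacle is purely combinatorial rather than conceptual: one must match each of the nine dendriform identities in each block to the correct summand of the three corresponding associative identities, and be careful with the BiHom twists $\alpha_i,\beta_j$ that decorate every term. Once the pairing is laid out, the verification of each of (\ref{3})-(\ref{8}) reduces to a straightforward telescoping, directly mirroring the passage from $(l_\prec,r_\prec,l_\succ,r_\succ)$ to $(l_\prec+l_\succ,\,r_\prec+r_\succ)$ already carried out in Proposition \ref{propa2} for a single bimodule.
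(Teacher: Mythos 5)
Your proposal is correct and follows essentially the same route as the paper: invoke Proposition \ref{propa2} (together with Lemma \ref{lem2}) to obtain the two summed bimodules over the associated BiHom-associative algebras, and then derive each of the compatibilities (\ref{3})--(\ref{8}) by summing the appropriate identities from the blocks (\ref{bieq35})--(\ref{bieq43}) and (\ref{bieq44})--(\ref{bieq52}). The paper merely asserts this last equivalence without displaying the bookkeeping, so your more explicit description of the term-by-term matching is, if anything, a slight improvement in detail rather than a departure in method.
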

\begin{proof}
Let $(A, B, l_{\prec_{A}}, r_{\prec_{A}}, l_{\succ_{A}}, r_{\succ_{A}}, \beta_1,\b_2,
 l_{\prec_{B}}, r_{\prec_{B}}, l_{\succ_{B}}, r_{\succ_{B}}, \alpha_1,\a_2)$ be a matched pair of a BiHom-dendriform algebras $(A, \prec_{A}, \succ_{A}, \alpha_1,\a_2)$ and $(B, \prec_{B}, \succ_{B},\beta_1,\b_2)$. In view of Proposition \ref{propa2}, the linear maps $l_{\prec_{A}} + l_{\succ_{A}}, r_{\prec_{A}} + r_{\succ_{A}}:A\rightarrow gl(B)$ and  $l_{\prec_{B}} + l_{\succ_{B}},  r_{\prec_{B}} + r_{\succ_{B}}:B\rightarrow gl(A)$ are bimodules of the underlying BiHom-Lie algebras $(A,\cdot_A, \alpha_1,\a_2)$ and $(B,\cdot_B,\beta_1,\b_2)$, respectively. Therefore, (\ref{3})-(\ref{5}) are equivalents to (\ref{bieq35})-(\ref{bieq43}) and (\ref{6})-(\ref{8}) are equivalents to (\ref{bieq44})-(\ref{bieq52}).
\end{proof}
Now, we introduce the definition of noncommutative BiHom-pre-Poisson algebra and we give some results.
\begin{defn}\label{def pre-poiss}
A noncommutative BiHom-pre-Poisson algebra is a $6$-tuple
$(A,\prec,\succ,\ast,\alpha_1,\a_2)$ such that $(A,\prec,\succ,\alpha_1,\a_2)$ is a
BiHom-dendriform algebra and $(A,\ast,\alpha_1,\a_2)$ is a BiHom-pre-Lie algebra
satisfying the following compatibility conditions:
\begin{eqnarray}
    \label{eq:pre-Poisson 1}(\a_2(x)\ast \a_1(y)-\a_2(y)\ast \a_1(x))\prec \alpha_2(z)&=&\a_1\a_2(x)\ast(\a_1(y)\prec z)-\a_1\a_2(y)\prec(\a_1(x)\ast z),\nonumber\\
    &&\\
    \label{eq:pre-Poisson 2} \a_2(x)\succ(\a_1\a_2(y)\ast \a_1(z)-\a_2(z)\ast \a_1^{2}(y))&=&\a_1\a_2^{2}(y)\ast(x\succ \a_1(z))-(\a_2^{2}(y)\ast x)\succ \a_1\a_2(z),\nonumber\\
    &&\\
    \label{eq:pre-Poisson 3} (\a_2(x)\prec \a_1(y)+\a_2(x)\succ \a_1(y))\ast \alpha_2(z)&=&(\a_2(x)\ast \a_1(z))\succ \alpha_2(y)+\alpha_1\a_2(x)\prec(\a_1(y)\ast z).\nonumber\\
    &&
  \end{eqnarray}
\end{defn}
\begin{thm} \label{Yaupre-poiss}
Let $(A, \prec , \succ ,\ast)$ be a noncommutative pre-Poisson
algebra \cite{Baii} and $\alpha_1,\a_2 :A\rightarrow A$ be two morphisms of $A$ such that $\a_1\a_2=\a_2\a_1$. Then $A_{\alpha_1,\a_2}:=(A, \prec _{\alpha_1,\a_2}=\prec\circ(\a_1\otimes\a_2), \succ_{\alpha_1,\a_2}=\succ\circ(\a_1\otimes\a_2),\ast_{\alpha_1,\a_2}=\ast\circ(\a_1\otimes\a_2), \alpha_1,\a_2)$ is a noncommutative BiHom-pre-Poisson algebra, called
the Yau twist of $A$. Moreover, assume that $(A', \prec ', \succ
',\ast')$ is another noncommutative pre-Poisson algebra and $\alpha'_1,\a'_2:A'\rightarrow
A'$
be a two commuting noncommutative pre-Poisson algebra morphisms. Let $f:A\rightarrow A'$ be a
pre-Poisson algebra morphism satisfying $f\circ \alpha_1 =\alpha'_1\circ f$ and $f\circ \alpha_2 =\alpha'_2\circ f$. Then $f:A_{\alpha_1,\a_2 }\rightarrow A'_{\alpha'_1,\a'_2}$ is a
noncommutative BiHom-pre-Poisson algebra morphism.
\end{thm}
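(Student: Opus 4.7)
The plan is to verify, one by one, the axioms of Definition \ref{def pre-poiss} for the twisted structure $A_{\alpha_1,\alpha_2}$, reducing each twisted identity to the corresponding untwisted identity for $(A,\prec,\succ,\ast)$. The uniform strategy is: expand each twisted operation via its definition (e.g.\ $x\prec_{\alpha_1,\alpha_2}y=\alpha_1(x)\prec\alpha_2(y)$), push every outer $\alpha_i$ inside using the assumption that $\alpha_1,\alpha_2$ are morphisms of $\prec,\succ,\ast$ (so $\alpha_i(u\star v)=\alpha_i(u)\star\alpha_i(v)$ for $\star\in\{\prec,\succ,\ast\}$), use $\alpha_1\alpha_2=\alpha_2\alpha_1$ freely to rearrange, and finally appeal to the untwisted axiom. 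This is the standard Yau-twist argument, so no new ingredient is needed beyond careful bookkeeping.

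First I would record BiHom-multiplicativity: for each twisted operation $\star_{\alpha_1,\alpha_2}=\star\circ(\alpha_1\otimes\alpha_2)$ one has, for $i\in\{1,2\}$,
\[
\alpha_i(x\star_{\alpha_1,\alpha_2}y)=\alpha_i\bigl(\alpha_1(x)\star\alpha_2(y)\bigr)=\alpha_i\alpha_1(x)\star\alpha_i\alpha_2(y)=\alpha_i(x)\star_{\alpha_1,\alpha_2}\alpha_i(y),
\]
which follows from the morphism property of $\alpha_i$ and from $\alpha_1\alpha_2=\alpha_2\alpha_1$. Next I would check the three BiHom-dendriform axioms. For instance, the first of them becomes, after substituting definitions and using the morphism property,
\[
(x\prec_{\alpha_1,\alpha_2}y)\prec_{\alpha_1,\alpha_2}\alpha_2(z)=\bigl(\alpha_1^{2}(x)\prec\alpha_1\alpha_2(y)\bigr)\prec\alpha_2^{2}(z),
\]
while
\[
\alpha_1(x)\prec_{\alpha_1,\alpha_2}(y\cdot_{\alpha_1,\alpha_2}z)=\alpha_1^{2}(x)\prec\bigl(\alpha_1\alpha_2(y)\cdot\alpha_2^{2}(z)\bigr),
\]
and the equality of the two sides is exactly the first untwisted dendriform axiom applied to $\alpha_1^{2}(x),\alpha_1\alpha_2(y),\alpha_2^{2}(z)$. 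The other two dendriform axioms and the BiHom-pre-Lie identity (\ref{klkl}) are handled identically.

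The step I expect to be the main obstacle is the verification of the three compatibility conditions (\ref{eq:pre-Poisson 1})--(\ref{eq:pre-Poisson 3}), because each carries several occurrences of $\alpha_1,\alpha_2$ and one must check that after expansion the powers of $\alpha_1$ and $\alpha_2$ appearing on the two sides match perfectly. Concretely, for (\ref{eq:pre-Poisson 1}) one substitutes the twisted operations, applies the morphism property repeatedly to the outer $\alpha_i$'s, and invokes $\alpha_1\alpha_2=\alpha_2\alpha_1$ to rewrite both sides as
\[
\bigl(\alpha_1\alpha_2(x)\ast\alpha_1\alpha_2(y)-\alpha_1\alpha_2(y)\ast\alpha_1\alpha_2(x)\bigr)\prec\alpha_2^{2}(z)
\]
versus
\[
\alpha_1\alpha_2(x)\ast\bigl(\alpha_1\alpha_2(y)\prec\alpha_2(z)\bigr)\ \text{and an analogous second summand},
\]
(up to identical substitutions of iterated $\alpha_i$), which is precisely the untwisted compatibility identity of Aguiar applied to $\alpha_1\alpha_2(x),\alpha_1\alpha_2(y),\alpha_2(z)$. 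The remaining two compatibilities (\ref{eq:pre-Poisson 2}) and (\ref{eq:pre-Poisson 3}) are verified by the same pattern. At no point does one need a new identity in $(A,\prec,\succ,\ast)$; the whole difficulty lies in matching the $\alpha$-exponents term-by-term.

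Finally, for the morphism part, assume $f$ is a pre-Poisson morphism with $f\alpha_i=\alpha'_i f$. For each operation $\star\in\{\prec,\succ,\ast\}$,
\[
f(x\star_{\alpha_1,\alpha_2}y)=f\bigl(\alpha_1(x)\star\alpha_2(y)\bigr)=f\alpha_1(x)\star' f\alpha_2(y)=\alpha'_1 f(x)\star'\alpha'_2 f(y)=f(x)\star'_{\alpha'_1,\alpha'_2}f(y),
\]
and the intertwining conditions $f\alpha_i=\alpha'_i f$ are assumed. Hence $f:A_{\alpha_1,\alpha_2}\to A'_{\alpha'_1,\alpha'_2}$ is a morphism of noncommutative BiHom-pre-Poisson algebras, completing the proof.
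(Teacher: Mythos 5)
Your proposal is correct and follows essentially the same route as the paper: expand each twisted operation, push the morphisms $\alpha_1,\alpha_2$ inside, apply the corresponding untwisted identity to suitably iterated images of $x,y,z$, and verify the morphism property by the same one-line computation; the paper carries this out explicitly only for relation (\ref{eq:pre-Poisson 1}) (obtaining $(\alpha_1^{2}\alpha_2(x)\ast\alpha_1^{2}\alpha_2(y)-\alpha_1^{2}\alpha_2(y)\ast\alpha_1^{2}\alpha_2(x))\prec\alpha_2^{2}(z)$, so note your displayed exponents are off by one factor of $\alpha_1$, as your own parenthetical hedge anticipates) and declares the rest analogous.
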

\begin{proof}
We shall only prove relation (\ref{eq:pre-Poisson 1}) the others being proved analogously. Then, for
any $x, y, z \in A$,
\begin{align*}
&(\a_2(x)\ast_{\a_1,\a_2} \a_1(y)-\a_2(y)\ast_{\a_1,\a_2} \a_1(x))\prec_{\a_1,\a_2} \alpha_2(z)\\
=&(\a_1\a_2(x)\ast \a_1\a_2(y)-\a_1\a_2(y)\ast \a_1\a_2(x))\prec_{\a_1,\a_2} \alpha_2(z)\\
=&(\a_1^{2}\a_2(x)\ast \a_1^{2}\a_2(y)-\a_1^{2}\a_2(y)\ast \a_1^{2}\a_2(x))\prec \alpha_2^{2}(z)\\
=&\a_1^{2}\a_2(x)\ast(\a_1^{2}\a_2(y)\prec \a_2^{2}(z))-\a_1^{2}\a_2(y)\prec(\a_1^{2}\a_2(x)\ast \a_2^{2}(z))\\
=&\a_1\a_2(x)\ast_{\a_1,\a_2}(\a_1(y)\prec_{\a_1,\a_2} z)-\a_1\a_2(y)\prec_{\a_1,\a_2}(\a_1(x)\ast_{\a_1,\a_2} z).
\end{align*}
For the second assertion, we have
\begin{align*}
    f(x\prec_{\a_1,\a_2} y)=&f(\a_1(x)\prec\a_2(y))\\
    =&f(\a_1(x))\prec' f(\a_2(y))\\
    =&\a'_1 f(x)\prec'\a'_2 f(y)\\
    =&f(x)\prec'_{\a_1,\a_2} f(y).
\end{align*}
Similarly, we have $ f(x\succ_{\a_1,\a_2}y)=f(x)\succ'_{\a'_1,\a'_2} f(y)$ and $ f(x\ast_{\a_1,\a_2}y)=f(x)\ast'_{\a'_1,\a'_2} f(y)$.
This completes the proof.
\end{proof}
\begin{prop}\label{isma}
More generally, let $(A, \prec , \succ ,\ast, \alpha_1,\a_2)$ be a
noncommutative BiHom-pre-Poisson algebra and $\alpha'_1,\a'_2: A\rightarrow A$ be a two noncommutative BiHom-pre-Poisson algebra morphisms such that any two of the maps $\a_1,\a_2,\a'_1,\a'_2$
commute.
Then  $(A, \prec _{\alpha'_1,\a'_2}, \succ_{\alpha'_1,\a'_2},\ast_{\alpha'_1,\a'_2},\alpha_1 \circ \alpha'_1,\alpha_2 \circ \alpha'_2)$ is a noncommutative BiHom-pre-Poisson
algebra.
\end{prop}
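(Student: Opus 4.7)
The plan is to verify each defining axiom of a noncommutative BiHom-pre-Poisson algebra for the twisted structure by direct computation, mirroring the strategy used in Theorem \ref{Yaupre-poiss}. Set $\tilde{\alpha}_1 := \alpha_1 \circ \alpha'_1$ and $\tilde{\alpha}_2 := \alpha_2 \circ \alpha'_2$. The pairwise commutativity hypothesis instantly gives $\tilde{\alpha}_1 \tilde{\alpha}_2 = \tilde{\alpha}_2 \tilde{\alpha}_1$. The key leverage is that $\alpha'_1,\alpha'_2$ are assumed to be morphisms of the BiHom-pre-Poisson algebra, so they commute past each of $\prec,\succ,\ast$ (distributing onto both arguments), and any of the four maps $\alpha_1,\alpha_2,\alpha'_1,\alpha'_2$ may be freely reordered with another.

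First I would check BiHom-multiplicativity of $\prec_{\alpha'_1,\alpha'_2},\succ_{\alpha'_1,\alpha'_2},\ast_{\alpha'_1,\alpha'_2}$ with respect to $\tilde{\alpha}_1,\tilde{\alpha}_2$; this is immediate since $\alpha_1,\alpha_2$ are multiplicative for $\prec,\succ,\ast$ and $\alpha'_1,\alpha'_2$ are morphisms. Next I would verify the three BiHom-dendriform axioms for $(A,\prec_{\alpha'_1,\alpha'_2},\succ_{\alpha'_1,\alpha'_2},\tilde{\alpha}_1,\tilde{\alpha}_2)$: expand each side using the definition $x \prec_{\alpha'_1,\alpha'_2} y = \alpha'_1(x)\prec\alpha'_2(y)$ (and similarly for $\succ$), push the extra $\alpha'_i$ factors onto the innermost arguments using the morphism property, and invoke the original BiHom-dendriform axiom on the transformed triple. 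The BiHom-pre-Lie identity for $\ast_{\alpha'_1,\alpha'_2}$ is handled in the same fashion.

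Finally I would establish the three compatibility conditions \eqref{eq:pre-Poisson 1}--\eqref{eq:pre-Poisson 3}. As a representative sample, for \eqref{eq:pre-Poisson 1} I would expand the left-hand side
\[
\bigl(\tilde{\alpha}_2(x)\ast_{\alpha'_1,\alpha'_2}\tilde{\alpha}_1(y)-\tilde{\alpha}_2(y)\ast_{\alpha'_1,\alpha'_2}\tilde{\alpha}_1(x)\bigr)\prec_{\alpha'_1,\alpha'_2}\tilde{\alpha}_2(z),
\]
pull the various $\alpha'_i$'s through the two outer operations using the morphism property, collect all powers of $\alpha'_i$ and $\alpha_i$ using commutativity, and recognize the result as the image under $\alpha'_i$-powers of the original relation \eqref{eq:pre-Poisson 1} applied to suitable transformed arguments in $A$; the right-hand side reduces to the same expression by a symmetric manipulation. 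Axioms \eqref{eq:pre-Poisson 2} and \eqref{eq:pre-Poisson 3} are analogous. The main obstacle is purely bookkeeping: tracking the correct powers of $\alpha_i$'s and $\alpha'_i$'s so that both sides meet; no genuine new identity is required, because the morphism hypothesis on $\alpha'_1,\alpha'_2$ together with pairwise commutativity lets every twist factor be redistributed and cancelled appropriately.
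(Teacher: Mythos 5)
Your proposal is correct and follows essentially the approach the paper intends: the paper in fact prints no proof for Proposition \ref{isma}, but its model argument for the special case (Theorem \ref{Yaupre-poiss}) is exactly this direct verification, and your key observation checks out --- after expanding, say, \eqref{eq:pre-Poisson 1} for the twisted products, the morphism and commutativity hypotheses let you rewrite both sides as the original identity evaluated at $X=\alpha_1'^{2}\alpha'_2(x)$, $Y=\alpha_1'^{2}\alpha'_2(y)$, $Z=\alpha_2'^{2}(z)$. No new idea is needed beyond this bookkeeping, so the proof is sound.
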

\begin{cor}
Let $(A, \prec , \succ,\ast,\alpha_1,\a_2)$ be a noncommutative BiHom-pre-Poisson
algebra and $n\in\mathbb{N}^{\ast}$. Then
\begin{enumerate}
\item
The $nth$ derived noncommutative BiHom-pre-Poisson algebra of type $1$ of $A$ is
defined by
$$A_{1}^{n}=(A,\prec^{(n)}=\prec\circ(\a_1^{n}\otimes\a_2^{n}),\succ^{(n)}=\succ\circ(\a_1^{n}\otimes\a_2^{n})
,\ast^{(n)}=\ast\circ(\a_1^{n}\otimes\a_2^{n}),\alpha_1^{n+1},\alpha_2^{n+1}).$$
\item
The $nth$ derived noncommutative BiHom-pre-Poisson algebra of type $2$ of $A$ is
defined by
\begin{align*}&A_{2}^{n}=(A,\prec^{(2^n-1)}=\prec\circ(\a_1^{2^{n}-1}\otimes\a_2^{2^{n}-1}),\succ^{(2^n-1)}=\succ\circ(\a_1^{2^{n}-1}\otimes\a_2^{2^{n}-1}),\\&\ast^{(2^n-1)}=\ast\circ(\a_1^{2^{n}-1}\otimes\a_2^{2^{n}-1}),\alpha_1^{2^n},\alpha_2^{2^n}).\end{align*}
\end{enumerate}
\end{cor}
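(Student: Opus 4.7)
The plan is to derive both parts from Proposition \ref{isma} (the ``twist by a commuting morphism'' principle), since the derived algebras are exactly iterated Yau-type twists.

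For part (i), I would take $\a'_1 := \a_1^n$ and $\a'_2 := \a_2^n$. These are noncommutative BiHom-pre-Poisson algebra morphisms of $A$: since $\a_1$ and $\a_2$ are multiplicative for $\prec,\succ,\ast$ (by definition of the BiHom-dendriform and BiHom-pre-Lie structures on $A$), an easy induction shows the same holds for $\a_i^n$, and they commute with $\a_1,\a_2$ trivially. Hence Proposition \ref{isma} applies and produces a noncommutative BiHom-pre-Poisson algebra $(A,\prec_{\a'_1,\a'_2},\succ_{\a'_1,\a'_2},\ast_{\a'_1,\a'_2},\a_1\circ\a'_1,\a_2\circ\a'_2)$. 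Unfolding the twist, $\prec_{\a_1^n,\a_2^n}(x,y)=\prec(\a_1^n(x),\a_2^n(y))=\prec^{(n)}(x,y)$, and similarly for $\succ^{(n)}$ and $\ast^{(n)}$, while $\a_1\circ\a_1^n=\a_1^{n+1}$ and $\a_2\circ\a_2^n=\a_2^{n+1}$. This is exactly $A_1^n$.

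For part (ii), I would proceed by induction on $n$. The base case $n=1$ is immediate: $A_2^1=(A,\prec^{(1)},\succ^{(1)},\ast^{(1)},\a_1^{2},\a_2^{2})$ coincides with $A_1^1$, which is a noncommutative BiHom-pre-Poisson algebra by part (i). For the inductive step, assume $A_2^n$ is a noncommutative BiHom-pre-Poisson algebra and apply Proposition \ref{isma} to it with $\a'_1:=\a_1^{2^n}$ and $\a'_2:=\a_2^{2^n}$. First I would check that $\a_1^{2^n}$ and $\a_2^{2^n}$ are morphisms of $A_2^n$: a one-line calculation,
\[
\a_1\bigl(\prec^{(m)}(x,y)\bigr)=\a_1\bigl(\prec(\a_1^m(x),\a_2^m(y))\bigr)=\prec(\a_1^m(\a_1(x)),\a_2^m(\a_1(y)))=\prec^{(m)}(\a_1(x),\a_1(y)),
\]
shows $\a_1$ (hence any power) is a morphism for the twisted operations, and the same for $\a_2$; all four of $\a_1,\a_2,\a_1^{2^n},\a_2^{2^n}$ commute pairwise. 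Proposition \ref{isma} then yields a noncommutative BiHom-pre-Poisson structure whose operations are $\prec^{(2^n-1)}_{\a_1^{2^n},\a_2^{2^n}}(x,y)=\prec\bigl(\a_1^{(2^n-1)+2^n}(x),\a_2^{(2^n-1)+2^n}(y)\bigr)=\prec^{(2^{n+1}-1)}(x,y)$, and similarly for $\succ,\ast$, while the twisting maps are $\a_1^{2^n}\circ\a_1^{2^n}=\a_1^{2^{n+1}}$ and $\a_2^{2^n}\circ\a_2^{2^n}=\a_2^{2^{n+1}}$. This is precisely $A_2^{n+1}$, closing the induction.

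The only real content is exponent bookkeeping, in particular the identity $(2^n-1)+2^n=2^{n+1}-1$, which is what makes the type $2$ ladder close under iteration. The potential obstacle, if any, is purely notational: one must be careful that the commutativity hypothesis of Proposition \ref{isma} (``any two of the maps $\a_1,\a_2,\a'_1,\a'_2$ commute'') is applied to the structure maps of the twisted algebra $A_2^n$, not of $A$; but since the new structure maps are just powers of the original $\a_1,\a_2$, pairwise commutativity is automatic.
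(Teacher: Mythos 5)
Your proposal is correct, and for part (i) it is exactly the paper's argument: a single application of Proposition \ref{isma} with $\a'_1=\a_1^{n}$, $\a'_2=\a_2^{n}$, together with the (easy, and worth recording) check that powers of $\a_1,\a_2$ are morphisms commuting pairwise. For part (ii), however, you diverge from the paper: the paper again applies Proposition \ref{isma} just once, directly to the original algebra $A$, taking $\a'_1=\a_1^{2^{n}-1}$ and $\a'_2=\a_2^{2^{n}-1}$, so that the new structure maps are $\a_1\circ\a_1^{2^{n}-1}=\a_1^{2^{n}}$ and $\a_2\circ\a_2^{2^{n}-1}=\a_2^{2^{n}}$ and the twisted products are $\prec\circ(\a_1^{2^{n}-1}\otimes\a_2^{2^{n}-1})$, etc. Your inductive route instead realizes $A_2^{n+1}$ as the twist of $A_2^{n}$ by $\a'_1=\a_1^{2^{n}}$, $\a'_2=\a_2^{2^{n}}$, using the identity $(2^{n}-1)+2^{n}=2^{n+1}-1$. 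Both arguments rest on the same key lemma and both are valid; the paper's is shorter, while yours explains the provenance of the exponents $2^{n}-1$ and $2^{n}$ (each type-$2$ derived algebra is the ``self-twist'' of the previous one) and correctly flags the one genuine subtlety of the iterated version, namely that the morphism and commutativity hypotheses of Proposition \ref{isma} must be verified for the twisted operations of $A_2^{n}$ rather than for those of $A$ --- which, as you note, is automatic because everything in sight is a power of $\a_1$ or $\a_2$.
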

\begin{proof}
Apply Theorem \ref{isma} with $\alpha'_1=\alpha_1^{n},~\a'_2=\alpha_2^{n}$ and
$\alpha'_1=\alpha_1^{2^{n}-1},~\a'_2=\alpha_2^{2^{n}-1}$ respectively.
\end{proof}
\begin{thm}
Let $(A,\prec,\succ,\ast,\a_1,\a_2)$ be a regular noncommutative BiHom-pre-Poisson
algebra. Then $(A,\cdot,\{\cdot,\cdot\},\a_1,\a_2)$ is a noncommutative
BiHom-Poisson algebra with
$$x\cdot y=x\prec y+x\succ y,~~and ~~\{x, y\}=x\ast y-\a_1^{-1}\a_2(y)\ast \a_1\a_2^{-1}(x),$$
for any $x,y \in A$. We say that $(A,\cdot,\{\cdot,\cdot\},\a_1,\a_2)$ is
the sub-adjacent noncommutative BiHom-Poisson algebra of $(A,\prec,\succ,\ast,\a_1,\a_2)$ and
denoted by $A^{c}$.
\end{thm}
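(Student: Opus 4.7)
The plan is to verify the three defining conditions of a noncommutative BiHom-Poisson algebra: the BiHom-associativity of $\cdot$, the BiHom-Lie structure of $\{\cdot,\cdot\}$, and the BiHom-Leibniz identity (\ref{leibniz}). The first two are already available for free: by Lemma \ref{lem2} the operation $x\cdot y=x\prec y+x\succ y$ makes $(A,\cdot,\a_1,\a_2)$ into a BiHom-associative algebra, and by Lemma \ref{lem1} the bracket $\{x,y\}=x\ast y-\a_1^{-1}\a_2(y)\ast \a_1\a_2^{-1}(x)$ makes $(A,\{\cdot,\cdot\},\a_1,\a_2)$ into a BiHom-Lie algebra (using regularity). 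The entire content of the theorem is therefore the BiHom-Leibniz identity, and that is what I would devote the proof to.

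To verify (\ref{leibniz}), I would first rewrite the three compatibility axioms of Definition \ref{def pre-poiss} in bracket form so the correspondence with the Leibniz identity becomes transparent. A direct computation from the definition of $\{\cdot,\cdot\}$ shows
\[
\{\a_2(x),\a_1(y)\}=\a_2(x)\ast\a_1(y)-\a_2(y)\ast\a_1(x),\qquad \{\a_1\a_2(y),\a_1(z)\}=\a_1\a_2(y)\ast\a_1(z)-\a_2(z)\ast\a_1^{2}(y).
\]
Consequently, (\ref{eq:pre-Poisson 1}) becomes $\{\a_2(x),\a_1(y)\}\prec\a_2(z)=\a_1\a_2(x)\ast(\a_1(y)\prec z)-\a_1\a_2(y)\prec(\a_1(x)\ast z)$, axiom (\ref{eq:pre-Poisson 2}) becomes $\a_2(x)\succ\{\a_1\a_2(y),\a_1(z)\}=\a_1\a_2^{2}(y)\ast(x\succ\a_1(z))-(\a_2^{2}(y)\ast x)\succ\a_1\a_2(z)$, and (\ref{eq:pre-Poisson 3}) is $(\a_2(x)\cdot\a_1(y))\ast\a_2(z)=(\a_2(x)\ast\a_1(z))\succ\a_2(y)+\a_1\a_2(x)\prec(\a_1(y)\ast z)$.

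Next I would expand both sides of (\ref{leibniz}). Writing $y\cdot z=y\prec z+y\succ z$ and unwinding the bracket using $\{u,v\}=u\ast v-\a_1^{-1}\a_2(v)\ast\a_1\a_2^{-1}(u)$, the left-hand side $\{\a_1\a_2(x),y\cdot z\}$ splits into four terms, and the right-hand side into eight terms (four from the $\prec$/$\succ$-splitting of $\cdot$ on each summand). I would then apply the BiHom-multiplicativity of $\a_1,\a_2$ on both sides with appropriate substitutions (most often replacing $y$ by $\a_1^{-1}\a_2(y)$ or $x$ by $\a_1\a_2^{-1}(x)$ so that the tower of twists in the axioms matches the tower in the Leibniz expansion). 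The key matching is: the four terms involving $\prec$ collapse via (\ref{eq:pre-Poisson 1}) to zero, the four terms involving $\succ$ collapse via (\ref{eq:pre-Poisson 2}) to zero, and the remaining cross terms of the form $(\ast)\succ$ and $\prec(\ast)$, which come from having one $\prec$ and one $\succ$ on opposite sides, cancel precisely by (\ref{eq:pre-Poisson 3}) applied once with $x,y,z$ and once with $y,x,z$ swapped.

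The essentially mechanical but careful bookkeeping of twists in the substitutions is the main obstacle; there is nothing subtle conceptually. Once the three axioms are normalized into bracket form as above, the cancellation occurs axiom-by-axiom, and the identity $\{\a_1\a_2(x),y\cdot z\}=\{\a_2(x),y\}\cdot\a_2(z)+\a_2(y)\cdot\{\a_1(x),z\}$ follows. This completes the verification and shows that $(A,\cdot,\{\cdot,\cdot\},\a_1,\a_2)$ is a noncommutative BiHom-Poisson algebra, thereby justifying the name \emph{sub-adjacent noncommutative BiHom-Poisson algebra} $A^{c}$ of $(A,\prec,\succ,\ast,\a_1,\a_2)$.
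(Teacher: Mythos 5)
Your proposal is correct and follows essentially the same route as the paper: both delegate the BiHom-associative and BiHom-Lie structures to Lemmas \ref{lem2} and \ref{lem1} respectively, then expand the BiHom-Leibniz identity into its $\prec$/$\succ$ constituents and cancel the resulting terms in three groups, one for each of the compatibility axioms (\ref{eq:pre-Poisson 1})--(\ref{eq:pre-Poisson 3}). Your preliminary rewriting of the axioms in bracket form (e.g.\ $\{\a_2(x),\a_1(y)\}=\a_2(x)\ast\a_1(y)-\a_2(y)\ast\a_1(x)$) is a helpful organizational touch but does not change the substance of the argument.
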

\begin{proof}
By Lemma \ref{lem1} and Lemma \ref{lem2}, we deduce that
$(A,\cdot,\a_1,\a_2)$ is a BiHom-associative algebra and
$(A,\{\cdot,\cdot\},\a_1,\a_2)$ is a BiHom-Lie algebra. Now, we show the
BiHom-Leibniz identity
\begin{align*}
   & \{\a_1\a_2(x),y\cdot z\}-\{\a_2(x),y\}\cdot\a_2(z)-\a_2(y)\cdot\{\a_1(x),z\}\\
    =&\{\a_1\a_2(x),y\prec z+ y\succ z\}-\{\a_2(x),y\}\prec\a_2(z)-\{\a_2(x),y\}\succ\a_2(z)\\&-\a_2(y)\prec\{\a_1(x),z\}-\a_2(y)\succ\{\a_1(x),z\}\\
    =&\a_1\a_2(x)\ast(y\prec z)+\a_1\a_2(x)\ast(y\succ z)-\a_1^{-1}\a_2(y\prec z)\ast\a_1^{2}(x)\\&-\a_1^{-1}\a_2(y\succ z)\ast\a_1^{2}(x)-(\a_2(x)\ast y)\prec\a_2(z)+(\a_1^{-1}\a_2(y)\ast\a_1(x)))\prec\a_2(z)\\&-(\a_2(x)\ast y)\succ\a_2(z)-(\a_1^{-1}\a_2(y)\ast\a_1(x))\succ\a_2(z)-\a_2(y)\prec(\a_1(x)\ast z)\\&+\a_2(y)\prec(\a_1^{-1}\a_2(z)\ast\a_1^{2}\a_2^{-1}(x))-\a_2(y)\succ(\a_1(x)\ast z)+\a_2(y)\succ(\a_1^{-1}\a_2(z)\ast\a_1^{2}\a_2^{-1}(x))\\
    =&\Big(\a_1\a_2(x)\ast(y\prec z)-\a_2(y)\prec(\a_1(x)\ast z)-(\a_2(x)\ast y-\a_1^{-1}\a_2(y)\ast\a_1(x))\prec(z)\Big)\\&+\Big(\a_1\a_2(x)\ast(y\succ z)-(\a_2(x)\ast y)\succ\a_2(z)-\a_2(y)\succ(\a_1(x)\ast z-\a_1^{-1}\a_2(z)\ast\a_1^{2}\a_2^{-1}(x)\Big)\\
    &+\Big((\a_1^{-1}\a_2(y)\ast \a_1(x))\succ\a_2(z)+\a_2(y)\prec(\a_1^{-1}\a_2(z)\ast\a_1^{2}\a_2^{-1}(x))\\&-(\a_1^{-1}\a_2(y)\succ\a_1^{-1}\a_2(z)\a_1^{-1}\a_2(y)\succ\a_1^{-1}\a_2(z))\ast\a_1^{2}(x)\Big)\\
    =&0+0+0=0~~(by~(\ref{eq:pre-Poisson 1})-(\ref{eq:pre-Poisson 3}))
\end{align*}
which implies that $(A,\cdot,\{\cdot,\cdot\},\a_1,\a_2)$ is a
noncommutative BiHom-Poisson algebra.
\end{proof}
The relation existing between a noncommutative BiHom-Poisson
algebra and noncommutative BiHom-pre-Poisson
algebra, as illustrated by the following diagram:
$$
\xymatrix{ \mbox{BiHom-dendriform alg+ BiHom-pre-Lie alg} \ar[rr] \ar[dd]^{x\ast y-\a_1^{-1}\a_2(y)\a_1\a_2^{-1}(x)}_{x\prec y + x\succ y} && \mbox{BiHom-pre-Poisson alg} \ar[dd]^{x\ast y-\a_1^{-1}\a_2(y)\a_1\a_2^{-1}(x)}_{x\prec y + x\succ y}\\
&& \\
\mbox{BiHom-associative alg+BiHom-Lie alg} \ar[rr] && \mbox{BiHom-Poisson alg.}
}
$$

In the following we introduce the notions of bimodule and matched pair of noncommutative BiHom-pre-Poisson algebras and related relevant properties are also given
\begin{defn}\label{def bim}
Let $(A, \prec, \succ,\ast, \alpha_1,\a_2)$ be a noncommutative BiHom-pre-Poisson algebra. A bimodule of $A$ is a $9$-tuple
($ l_{\prec}, r_{\prec}, l_{\succ}, r_{\succ},l_{\ast}, r_{\ast},
\beta_1,\b_2,V$) such that ($ l_{\ast}, r_{\ast},
\beta_1,\b_2,V$) is a bimodule of the BiHom-pre-Lie algebra $(A, \ast, \alpha_1,\a_2)$ and ($ l_{\prec}, r_{\prec}, l_{\succ}, r_{\succ},
\beta_1,\b_2,V$) is a bimodule of the
BiHom-dendriform algebra $(A, \prec, \succ, \alpha_1,\a_2)$
satisfying for all $ x, y \in A$ and $v\in V$:
\begin{eqnarray}
&&l_\prec(\{\a_2(x),\a_1(y)\})\b_2(v)=l_\ast(\a_1\a_2(x))l_\prec(\a_1(y))v-l_\prec(\a_1\a_2(y))l_\ast(\a_1(x))v,\label{1001}\\
&&r_\prec(\a_2(x))\rho(\a_2(y))\b_1(v)=l_\ast(\a_1\a_2(y))r_\prec(x)\b_1(v)-r_\prec(\a_1(y)\ast x)\b_1\b_2(v),\label{1002}\\
&&-r_\prec(\a_2(x))\rho(\a_2(y))\b_1(v)=r_\ast(\a_1(y)\prec x)\b_1\b_2(v)-l_{\prec}(\a_1\a_2(y))r_\ast(x)\b_1(v),\label{1003}\\
&&l_\succ(\a_2(x))\rho(\a_1\a_2(y))\b_1(v)=l_\ast(\a_1\a_2^{2}(y))l_\succ(x)\b_1(v)-l_\succ(\a_2^{2}(y)\ast z)\b_1\b_2(v),\label{1004}\\
&&r_\succ(\{\a_1\a_2(x),\a_1(y)\})\b_2(v)=l_\ast(\a_1\a_2^{2}(x))r_\succ(\a_1(y))v-r_\succ(\a_1\a_2(y))l_\ast(\a_2^{2}(y))v,\label{1005}\\
&&-l_\succ(\a_2(x))\rho(\a_2(y))\b_1^{2}(v)=r_{\ast}(x\succ\a_1(y))\b_1\b_2^{2}(v)-r_\succ(\a_1\a_2(y))r_\ast(x)\b_2^{2}(v),\label{1006}\\
&&l_\ast(\a_2(x)\cdot\a_1(y))\b_2(v)=r_\succ(a_2(y))l_\ast(\a_2(x))\b_1(v)+l_\prec(\a_1\a_2(x))l_\ast(\a_1(y))v\label{1007},\\
&&r_\ast(\a_2(x))l_\cdot(\a_2(y))\b_1(v)=l_\succ(\a_2(y)\ast\a_1(x))\b_2(v)+l_\prec(\a_1\a_2(y))r_\ast(x)\b_1(v),\label{1008}\\
&&r_\ast(\a_2(x))r_\cdot(\a_1(y))\b_2(v)=r_\succ(\a_2(y))r_\ast(\a_1(x))\b_2(v)+r_\prec(\a_1(y)\ast x)\b_1\b_2(v),\label{1009}
\end{eqnarray}where
$$\begin{array}{lllllll} x \cdot y = x \prec y + x \succ y,~l_{\cdot} = l_{\prec} +
l_{\succ},~r_{\cdot} = r_{\prec} + r_{\succ},
\\
 \{\a_2(x),\a_1(y)\} = \a_2(x)
\ast \a_1(y) - \a_2(y) \ast \a_1(x),\\ (\rho\circ\a_2)\b_1 = (l_{\ast}\circ\a_2)\b_1 - (r_{\ast}\circ \a_1)\b_2.
\end{array}$$

\end{defn}
\begin{prop}
Let $(l_{\prec}, r_{\prec}, l_{\succ}, r_{\succ},l_{\ast}, r_{\ast},
\b_1,\b_2, V)$ be a bimodule of a noncommutative BiHom-pre-Poisson algebra $(A,\prec,
\succ,\ast, \alpha_1,\a_2).$ Then, there exists a noncommutative BiHom-pre-Poisson algebra structure on the direct sum $A\oplus V $ of the underlying
vector spaces of $A$ and $V$ given by
\begin{eqnarray*}
(x + u) \prec' (y + v) &:=& x \prec y + l_{\prec}(x)v +
r_{\prec}(y)u, \cr (x + u) \succ' (y + v) &:=& x \succ y +
l_{\succ}(x)v + r_{\succ}(y)u,\cr (x + u) \ast' (y + v) &:=& x \ast y
+ l_{\ast}(x)v + r_{\ast}(y)u,\\
(\a_1\oplus\b_1)(x+u)&:=&\a_1(x)+\b_1(u),\\
(\a_2\oplus\b_2)(x+u)&:=&\a_2(x)+\b_2(u),
\end{eqnarray*}
for all $ x, y \in A, u, v \in V $. We denote it by $
A\times_{l_{\prec},r_{\prec}, l_{\succ}, r_{\succ}, l_{\ast},
r_{\ast},\alpha_1,\a_2,\beta_1,\b_2} V$.
\end{prop}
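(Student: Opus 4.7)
The plan is to verify the three defining structures of a noncommutative BiHom-pre-Poisson algebra on the direct sum $A \oplus V$ separately, and then check that they are compatible in the sense of Definition \ref{def pre-poiss}. The BiHom-dendriform and BiHom-pre-Lie parts are already supplied by earlier propositions in the paper, so the real work is the three compatibility identities \eqref{eq:pre-Poisson 1}--\eqref{eq:pre-Poisson 3}.

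First I would invoke the bimodule construction for BiHom-dendriform algebras to conclude that $(A \oplus V, \prec', \succ', \alpha_1 \oplus \beta_1, \alpha_2 \oplus \beta_2)$ is a BiHom-dendriform algebra, using that $(l_\prec, r_\prec, l_\succ, r_\succ, \beta_1, \beta_2, V)$ is by hypothesis a bimodule of the BiHom-dendriform part of $A$. Similarly, using the analogous proposition for BiHom-pre-Lie algebras together with $(l_\ast, r_\ast, \beta_1, \beta_2, V)$ being a bimodule of the BiHom-pre-Lie part, I conclude that $(A \oplus V, \ast', \alpha_1 \oplus \beta_1, \alpha_2 \oplus \beta_2)$ is a BiHom-pre-Lie algebra. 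This reduces the problem to verifying the three identities \eqref{eq:pre-Poisson 1}--\eqref{eq:pre-Poisson 3} for the three operations $\prec',\succ',\ast'$.

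Next, for each of the three identities, I would write out both sides with arguments of the form $(x+u)$, $(y+v)$, $(z+w)$ where $x,y,z \in A$ and $u,v,w \in V$, and expand using the definitions of $\prec', \succ', \ast'$. Each such expansion splits into an $A$-component and a $V$-component. The $A$-components on either side coincide because $(A,\prec,\succ,\ast,\alpha_1,\alpha_2)$ itself satisfies \eqref{eq:pre-Poisson 1}--\eqref{eq:pre-Poisson 3}. For the $V$-components, the terms will regroup so that each mixed expression (one variable from $V$, two from $A$) is controlled by exactly one of the compatibility identities \eqref{1001}--\eqref{1009} from Definition \ref{def bim}. Concretely, the occurrences of $v \in V$ in the middle slot are handled by relations like \eqref{1001}, \eqref{1005}, \eqref{1007}; the occurrences of $w \in V$ on the right by \eqref{1002}--\eqref{1004}, \eqref{1006}; and the occurrences of $u \in V$ on the left by \eqref{1008}--\eqref{1009}. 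The required auxiliary operator $\rho$ is interpreted on $V$ via $\rho = l_\ast - r_\ast \circ \alpha_1\alpha_2^{-1}$ (cf.\ Proposition \ref{propa1}), and the definition of $\{\cdot,\cdot\}$ in \eqref{1001}, \eqref{1005} is the one induced from $\ast$ on $A$, so these substitutions make each $V$-component vanish term-by-term.

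The main obstacle is purely bookkeeping: identity \eqref{eq:pre-Poisson 3}, for example, produces the largest number of mixed terms, and care must be taken to match the twists $\alpha_1,\alpha_2,\beta_1,\beta_2$ correctly on both sides. I would handle this by fixing one identity at a time, placing each $V$-variable in turn, and tabulating which defining relation of the bimodule collapses which pair of terms; the BiHom-multiplicativity of $\beta_1,\beta_2$ on the bimodule maps (listed among the axioms of Definition \ref{def bim}) ensures the twists line up. Once this tabulation is complete for all three compatibilities, the proposition follows, and the resulting structure is what is denoted $A\times_{l_{\prec},r_{\prec},l_{\succ},r_{\succ},l_{\ast},r_{\ast},\alpha_1,\alpha_2,\beta_1,\beta_2} V$.
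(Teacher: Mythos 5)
Your proposal matches the paper's proof in essence: the paper likewise relies on the semidirect-product constructions for the BiHom-dendriform and BiHom-pre-Lie parts and then verifies the compatibility identities by expanding both sides on elements of $A\oplus V$ and cancelling the $V$-components against (\ref{1001})--(\ref{1009}) (it writes out only (\ref{eq:pre-Poisson 1}), using (\ref{1001})--(\ref{1003}), and declares (\ref{eq:pre-Poisson 2}) and (\ref{eq:pre-Poisson 3}) similar). Your slot-by-slot allocation of the nine bimodule relations is slightly off in detail --- for instance (\ref{1001}) governs (\ref{eq:pre-Poisson 1}) with the $V$-element in the \emph{third} argument, not the middle one --- but the systematic tabulation you describe would correct this in execution, so it is not a gap.
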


\begin{proof}
We prove only the axiom (\ref{eq:pre-Poisson 1}). The axioms (\ref{eq:pre-Poisson 2}), (\ref{eq:pre-Poisson 3}) being proved similarly. For any
$x_{1},x_{2},x_{3}\in A$ and $v_1, v_2, v_3\in V$, we have
\begin{align*}
    &((\a_2+\b_2)(x_1+v_1)\ast'(\a_1+\b_1)(x_2+v_2))\prec'(\a_2+\b_2)(x_3+v_3)\\&-((\a_2+\b_2)(x_2+v_2)\ast'(\a_1+\b_1)(x_1+v_1))\prec'(\a_2+\b_2)(x_3+v_3)\\
    =&((\a_2(x_1)\ast\a_1(x_2))+l_\ast(\a_2(x_1))\b_1(v_2)+r_\ast(\a_1(x_2))\b_2(v_1))\prec'(\a_2+\b_2)(x_3+v_3)\\
    &-(\a_2(x_2)\ast\a_1(x_1)+l_\ast(\a_2(x_2)\b_1(v_1)+r_\ast(\a_1(x_1))\b_2(v_2))\prec'(\a_2+\b_2)(x_3+v_3)\\
    =&(\a_2(x_1)\ast\a_1(x_2))\prec\a_2(x_3)+l_\prec(\a_2(x_1)\ast\a_1(x_2))\b_2(v_3)+r_\prec(\a_2(x_3))l_\ast(\a_2(x_2))\b_1(v_1)\\
    &-r_\prec(\a_2(x_3))r_\ast(\a_1(x_1))\b_2(v_2)-(\a_2(x_2)\ast\a_1(x_1))\prec\a_2(x_3)-l_\prec(\a_2(x_2)\ast\a_1(x_1))\b_2(v_3)\\&+r_\prec(\a_2(x_3))l_\ast(\a_2(x_2))\b_1(v_1)-r_\prec(\a_2(x_3))r_\ast(\a_1(x_1))\b_2(v_2)\\
    =&\{\a_2(x_1),\a_1(x_2)\}+l_\prec(\{\a_2(x_1),\a_1(x_2)\})\b_2(v_3)\\
    &+r_\prec(\a_2(x_3))\rho(\a_2(x_2))\b_1(v_1)+r_\prec(\a_2(x_3))\rho(\a_2(x_2))\b_1(v_1).
\end{align*}
On the other hand ,
\begin{align*}
    &(\a_1+\b_1)(\a_2+\b_2)(x_1+v_1)\ast'(\a_1(x_2+v_2)\prec(x_3+v_3))\\
    &-(\a_1+\b_1)(\a_2+\b_2)(x_2+v_2)\prec'((\a_1+\b_1)(x_1+v_1)\ast'(x_3+v_3))\\
    =&(\a_1\a_2(x_1)+\b_1\b_2(v_1))\ast'(\a_1(x_2)\prec x_3+l_\prec(\a_1(x_2))v_3+r_\prec(x_3)\b_1(v_1)\\
    &-(\a_1\a_2(x_2)+\b_1\b_2(v_2))\prec'(\a_1(x_1)\ast x_3+l_\ast(\a_1(x_1))v_3+r_\ast(x_3)\a_1(v_1))\\
    =&\a_1\a_2(x_1)\ast\a_1(x_2)\prec x_3)+l_\ast(\a_1\a_2(x_1))l_\prec(\a_1(x_2))v_3+l_\ast(\a_1\a_2(x_1))r_\prec(x_3)\b_1(v_1)\\&+r_\ast(\a_1(x_2)\prec x_3)\b_1\b_2(v_1)-\a_1\a_2(x_2)\prec(\a_1(x_1)\ast x_3)-l_\prec(\a_1\a_2(x_2))l_\ast(\a_1(x_1))v_3\\&-l_\prec(\a_1\a_2(x_2))r_\ast(x_3)\a_1(v_1)-r_\prec(\a_1(x_1)\ast x_3)\b_1\b_2(v_2).
\end{align*}
By equations (\ref{eq:pre-Poisson 1}), (\ref{1001})-(\ref{1003}) we deduce that
\begin{align*}
&((\a_2+\b_2)(x_1+v_1)\ast'(\a_1+\b_1)(x_2+v_2))\prec'(\a_2+\b_2)(x_3+v_3)\\&-((\a_2+\b_2)(x_2+v_2)\ast'(\a_1+\b_1)(x_1+v_1))\prec'(\a_2+\b_2)(x_3+v_3)\\
=&(\a_1+\b_1)(\a_2+\b_2)(x_1+v_1)\ast'(\a_1(x_2+v_2)\prec(x_3+v_3))
   \\& -(\a_1+\b_1)(\a_2+\b_2)(x_2+v_2)\prec'((\a_1+\b_1)(x_1+v_1)\ast(x_3+v_3))
    \end{align*}
\end{proof}
There is an example of bimodule of noncommutative BiHom-pre-Poisson algebra
\begin{ex}
Let $(A,\prec, \succ,\ast,\alpha_1,\a_2)$ be a noncommutative BiHom-pre-Poisson algebra.
Then $(L_{\prec},R_{\prec},L_{\succ},R_{\succ},L_{\ast},R_{\ast},\alpha_1,\a_2,A)$ is called a regular
bimodule of $A$, where $L_{\prec}(x)y=x\prec y,~R_{\prec}(x)y=y\prec
x,~L_{\succ}(x)y=x\succ y,~R_{\succ}(x)y=y\succ x$ and
$L_{\ast}(x)y=x\ast y,~R_{\ast}(x)y=y\ast x$, for all $x,y\in A$.
\end{ex}
\begin{prop}
If $f:(A,\prec_1, \succ_1,\ast_1,\alpha_1,\a_2)\longrightarrow(A',\prec_2,
\succ_2,\ast_2,\beta_1,\b_2)$ is a morphism of noncommutative BiHom-pre-Poisson algebra,
then
$(l_{\prec_1},r_{\prec_1},l_{\succ_1},r_{\succ_1},l_{\ast_1},r_{\ast_1},\beta_1,\b_2,A')$
becomes a bimodule of $A$ via $f$, i.e, $l_{\prec_1}(x)y=f(x)\prec_2
y,~r_{\prec_1}(x)y=y \prec_2 f(x),~l_{\succ_1}(x)y=f(x)\succ_2
y,~r_{\succ_1}(x)y=y \succ_2 f(x)$ and $l_{\ast_1}(x)y=f(x)\ast_2
y,~r_{\ast_1}(x)y=y \ast_2 f(x)$ for all $(x,y)\in A\times A'$.
\end{prop}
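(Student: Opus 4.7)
The plan is to verify the nine compatibility axioms of Definition \ref{def bim} together with the underlying pre-Lie-bimodule axioms for $(l_{\ast_1},r_{\ast_1},\beta_1,\beta_2,A')$ and the dendriform-bimodule axioms for $(l_{\prec_1},r_{\prec_1},l_{\succ_1},r_{\succ_1},\beta_1,\beta_2,A')$. The guiding principle throughout is that each such axiom is obtained from the corresponding noncommutative BiHom-pre-Poisson identity in the target $A'$ by substituting $f(x),f(y)$ for two of the arguments (which live in $A$) while leaving the third (which lives in $A'$) untouched.

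First I would record the translation that will be used repeatedly: since $f$ is a morphism, we have $f(x\prec_1 y)=f(x)\prec_2 f(y)$, $f(x\succ_1 y)=f(x)\succ_2 f(y)$, $f(x\ast_1 y)=f(x)\ast_2 f(y)$, and $f\circ\alpha_i=\beta_i\circ f$ for $i=1,2$. Under the displayed formulas defining $l_{\bullet_1}$ and $r_{\bullet_1}$ via $f$, these identities let me rewrite any action of the form $l_{\bullet_1}(\text{word in }\alpha_1,\alpha_2,\prec_1,\succ_1,\ast_1)(v)$ as $(\text{same word in }\beta_1,\beta_2,\prec_2,\succ_2,\ast_2\text{ applied to }f(\cdot))\bullet_2 v$, and similarly for the right actions. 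Consequently, plugging the definitions into any axiom of Definition \ref{def bim} reduces the claimed equality to an identity in $(A',\prec_2,\succ_2,\ast_2,\beta_1,\beta_2)$ between elements of $A'$.

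Next, the BiHom-multiplicativity conditions $\beta_i(l_{\bullet_1}(x)v)=l_{\bullet_1}(\alpha_i(x))\beta_i(v)$ and their right analogues follow immediately from $\beta_i\circ f=f\circ\alpha_i$ and the BiHom-multiplicativity of the operations in $A'$. The pre-Lie-bimodule and dendriform-bimodule axioms then reduce, after the translation, to the defining identities of the BiHom-pre-Lie algebra $(A',\ast_2,\beta_1,\beta_2)$ and of the BiHom-dendriform algebra $(A',\prec_2,\succ_2,\beta_1,\beta_2)$ respectively, applied to the three arguments $f(x),f(y),v$. The compatibility axioms (\ref{1001})--(\ref{1006}) similarly collapse, one after the other, to the three BiHom-pre-Poisson relations (\ref{eq:pre-Poisson 1})--(\ref{eq:pre-Poisson 3}) in $A'$ with $v$ playing the role of the ``free'' variable, while (\ref{1007})--(\ref{1009}) are the same three relations with $v$ placed in a different slot.

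The only real obstacle is bookkeeping: in axioms (\ref{1001})--(\ref{1006}) the bracket $\{\alpha_2(x),\alpha_1(y)\}=\alpha_2(x)\ast_1\alpha_1(y)-\alpha_2(y)\ast_1\alpha_1(x)$ and the auxiliary expression $(\rho\circ\alpha_2)\beta_1=(l_{\ast_1}\circ\alpha_2)\beta_1-(r_{\ast_1}\circ\alpha_1)\beta_2$ mix left and right pre-Lie actions, and one must track the twist maps $\beta_1,\beta_2$ carefully whenever the pure-$A'$ argument $v$ sits inside a nested action. No genuinely new identity is needed, however; each line terminates at a BiHom-pre-Poisson axiom in $A'$ with one entry set equal to $v$, which completes the proof.
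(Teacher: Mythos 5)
Your proposal is correct and follows essentially the same route as the paper: the paper verifies only the representative axiom (\ref{1007}), rewriting $l_{\ast_1}(\alpha_2(x)\cdot_1\alpha_1(y))\beta_2(z)$ via the morphism property $f\circ\alpha_i=\beta_i\circ f$ and $f(x\cdot_1 y)=f(x)\cdot_2 f(y)$ until the BiHom-pre-Poisson identity (\ref{eq:pre-Poisson 3}) in $A'$ applies with arguments $f(x),f(y),z$, and declares the remaining axioms analogous --- which is precisely the general translation mechanism you describe. The only cosmetic slip is your grouping of the compatibility axioms (in fact (\ref{1001})--(\ref{1003}), (\ref{1004})--(\ref{1006}), (\ref{1007})--(\ref{1009}) correspond respectively to (\ref{eq:pre-Poisson 1}), (\ref{eq:pre-Poisson 2}), (\ref{eq:pre-Poisson 3}) with $v$ occupying each of the three slots), but this does not affect the argument.
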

\begin{proof}
We prove only the axiom (\ref{1007}). The others being proved similarly. For any
$x,y\in A$ and $z\in A'$, we have
\begin{align*}
    &l_{\ast_1}(\a_2(x)\cdot_1\a_1(y))\b_2(z)\\
    =&f(\a_2(x)\cdot_1\a_1(y))\ast_2\b_2(z))\\=&(\b_2 f(x)\cdot_2\b_1 f(y))\ast_2\b_2(z)\\
   =&(\b_2 f(x)\ast_2\b_1(z))\succ_2\b_2 f(y)+\b_1\b_2 f(x)\prec_2(\b_1 f(y)\ast_2 z) ~~(by~(\ref{1001}))\\
   =&(f(\a_2(x))\ast_2\b_1(z))\succ_2 f(\a_2(y))+f(\a_1\a_2(x))\prec_2(f(\a_1(y))\ast_2 z)\\
   =&r_{\succ_1}(\a_2(y))(f(\a_2(x))\ast_2\b_1(z))+l_{\prec_1}(\a_1\a_2(x))(f(\a_1(y))\ast_2 z)\\
    =&r_{\succ_1}(\a_2(y))l_{\ast_1}(\a_2(x))\b_1(z)+l_{\prec_1}(\a_1\a_2(x))l_{\ast_1}(\a_1(y))z.
\end{align*}
This finishes the proof.
\end{proof}
\begin{cor}\label{coroo}
Let ($ l_{\prec}, r_{\prec}, l_{\succ}, r_{\succ},l_{\ast},
r_{\ast}, \beta_1,\b_2, V$) be a bimodule of a regular noncommutative BiHom-pre-Poisson algebra
$(A, \prec, \succ,\ast, \alpha_1,\a_2)$ such that $\b_1$ is bijective. Let $(A, \cdot,\{\cdot,\cdot\}, \alpha_1,\a_2)$ be the
subadjacent of $(A, \prec, \succ,\ast, \alpha_1,\a_2)$. Then ($l_{\prec}+l_{\succ},
r_{\prec}+r_{\succ},l_\ast-(r_\ast\circ\a_1\a_2^{-1})\b_1^{-1}\b_2,\b_1,\b_2,V$) is a
representation of $(A, \cdot,\{\cdot,\cdot\}, \alpha_1,\a_2)$.
\end{cor}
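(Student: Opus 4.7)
The plan is to assemble the result from the two structure-specific reductions already established, and then verify the three compatibility axioms (\ref{isma1.1})--(\ref{isma1.3}) that link the bimodule of the BiHom-associative part with the representation of the BiHom-Lie part.

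First I would invoke Proposition \ref{propa2} applied to the BiHom-dendriform bimodule $(l_{\prec},r_{\prec},l_{\succ},r_{\succ},\beta_1,\beta_2,V)$. This immediately gives that $(l_{\prec}+l_{\succ},r_{\prec}+r_{\succ},\beta_1,\beta_2,V)$ is a bimodule of the sub-adjacent BiHom-associative algebra $(A,\cdot,\alpha_1,\alpha_2)$ with $x\cdot y=x\prec y+x\succ y$. Next I would invoke Proposition \ref{propa1} applied to the BiHom-pre-Lie bimodule $(l_\ast,r_\ast,\beta_1,\beta_2,V)$; since $\beta_1$ is assumed bijective (and $A$ is regular, so $\alpha_1,\alpha_2$ are invertible), this yields that $\rho:=l_\ast-(r_\ast\circ\alpha_1\alpha_2^{-1})\beta_1^{-1}\beta_2$ is a representation of the sub-adjacent BiHom-Lie algebra $(A,\{\cdot,\cdot\},\alpha_1,\alpha_2)$ with $\{x,y\}=x\ast y-\alpha_1^{-1}\alpha_2(y)\ast\alpha_1\alpha_2^{-1}(x)$.

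It then remains to verify the three mixed Leibniz-type identities from the definition of a representation of a noncommutative BiHom-Poisson algebra, namely
\begin{align*}
l(\{\alpha_2(x),y\})\beta_2(v)&=\rho(\alpha_1\alpha_2(x))l(y)v-l(\alpha_2(y))\rho(\alpha_1(x))v,\\
r(\{\alpha_1(x),y\})\beta_2(v)&=\rho(\alpha_1\alpha_2(x))r(y)v-r(\alpha_2(y))\rho(\alpha_2(x))v,\\
\rho(x\cdot y)\beta_1\beta_2(v)&=l(\alpha_1(x))\rho(y)\beta_1(v)+r(\alpha_1(y))\rho(x)\beta_2(v),
\end{align*}
where $l=l_{\prec}+l_{\succ}$, $r=r_{\prec}+r_{\succ}$. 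The strategy is to expand each side using $x\cdot y=x\prec y+x\succ y$, the definition of $\{\cdot,\cdot\}$, and the definition of $\rho$, and then collect terms so that the cross-axioms (\ref{1001})--(\ref{1009}) from Definition \ref{def bim} apply directly. For the first identity, expanding $l_\prec+l_\succ$ on $\{\alpha_2(x),y\}=\alpha_2(x)\ast y-\alpha_1^{-1}\alpha_2(y)\ast\alpha_1(x)$ splits the left-hand side into two groups; equations (\ref{1001}) and (\ref{1005}) (together with the equivariance relations of $l_\prec,l_\succ$ under $\beta_1,\beta_2$) recombine them into the right-hand side. The second identity is handled symmetrically using (\ref{1003}) and (\ref{1006}). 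For the third identity, I would plug $\rho=l_\ast-(r_\ast\circ\alpha_1\alpha_2^{-1})\beta_1^{-1}\beta_2$ into $\rho(x\cdot y)\beta_1\beta_2(v)$, split into $\prec$ and $\succ$ parts, and apply (\ref{1002}), (\ref{1004}), (\ref{1007})--(\ref{1009}) to match the right-hand side.

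The main obstacle, and the only place where computation is actually required, is the third identity: because both $l$ and $r$ decompose as $\prec+\succ$ and $\rho$ is itself a difference of twisted operators, expanding $\rho(x\cdot y)\beta_1\beta_2(v)$ produces eight terms that must be paired correctly with the bimodule axioms for the pre-Lie part twisted by $\alpha_1\alpha_2^{-1}$ and $\beta_1^{-1}\beta_2$. Carefully tracking the twists (using $\alpha_i\beta_j=\beta_j\alpha_i$ only via the equivariance statements $\beta_i\circ l_\bullet(x)=l_\bullet(\alpha_i(x))\circ\beta_i$ and similarly for $r_\bullet$) is the key bookkeeping step. Once this is done, all three compatibility conditions are verified and the corollary follows.
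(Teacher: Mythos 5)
Your proposal matches the paper's own proof: the paper likewise invokes Proposition \ref{propa2} to get that $(l_{\prec}+l_{\succ},r_{\prec}+r_{\succ},\beta_1,\beta_2,V)$ is a bimodule of the sub-adjacent BiHom-associative algebra, invokes Proposition \ref{propa1} to get that $l_\ast-(r_\ast\circ\alpha_1\alpha_2^{-1})\beta_1^{-1}\beta_2$ is a representation of the sub-adjacent BiHom-Lie algebra, and then asserts that the remaining compatibility axioms (\ref{isma1.1})--(\ref{isma1.3}) are verified in a similar way. Your write-up is in fact more explicit than the paper's, since you indicate which of the cross-axioms (\ref{1001})--(\ref{1009}) enter the verification of each of the three mixed identities, whereas the paper leaves that step entirely to the reader.
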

\begin{proof}
It follows from the relation between the noncommutative BiHom-pre-Poisson algebra
and the associated noncommutative BiHom-Poisson algebra. More precisely
by Poposition \ref{propa1} and Poposition \ref{propa2}, we deduce that ($l_\ast-(r_\ast\circ\a_1\a_2^{-1})\b_1^{-1}\b_2,\b_1,\b_2,V$) is a
representation of $(A, \{\cdot,\cdot\}, \alpha_1,\a_2)$ and ($l_{\prec}+l_{\succ},
r_{\prec}+r_{\succ},\b_1,\b_2,V$) is a
bimodule of $(A, \cdot, \alpha_1,\a_2)$. Now, the rest, it is easy ( in a similar way as for Poposition \ref{propa1} and Poposition \ref{propa2}) to verify the axioms (\ref{isma1.1})-(\ref{isma1.3})
\end{proof}
\begin{cor}
Let ($ l_{\prec}, r_{\prec}, l_{\succ}, r_{\succ},l_{\ast},
r_{\ast}, \beta_1,\b_2, V$) be a bimodule of a regular noncommutative BiHom-pre-Poisson algebra
$(A, \prec, \succ,\ast, \alpha_1,\a_2)$ such that $\b_1$ is bijective. Let $(A, \cdot,\{\cdot,\cdot\}, \alpha_1,\a_2)$ be the
subadjacent of $(A, \prec, \succ,\ast, \alpha_1,\a_2)$.
Then
\begin{enumerate}[label=\upshape{\arabic*)}]
\item $(l_{\prec}, r_{\succ},l_\ast-(r_\ast\circ\a_1\a_2^{-1})\b_1^{-1}\b_2, \beta_{1}, \beta_{2}, V) $ is bimodule of $ (A, \cdot,\{\cdot,\cdot\}, \alpha_{1}, \alpha_{2}); $
\item $(l_{\prec} + l_{\succ}, 0, 0,  r_{\prec} + r_{\succ},l_\ast,r_\ast, \beta_{1}, \beta_{2}, V)$ and $(l_{\prec}, 0, 0, r_{\succ},l_\ast,r_\ast, \beta_{1}, \beta_{2}, V)$ are bimodules
 of $(A, \prec, \succ,\ast, \alpha_{1}, \alpha_{2});$
\item  the noncommutative BiHom-pre-Poisson algebras
\begin{eqnarray*}
 A \times_{l_{\prec}, r_{\prec}, l_{\succ}, r_{\succ},l_\ast,r_\ast, \alpha_{1}, \alpha_{2}, \beta_{1}, \beta_{2}}V \mbox{ and }  A \times_{l_{\prec} +  l_{\succ} , 0, 0, r_{\prec} + r_{\succ}, l_\ast,r_\ast,\alpha_{1}, \alpha_{2}, \beta_{1}, \beta_{2}} V
 \end{eqnarray*} have the same associated
noncommutative BiHom-Poisson algebra $$A \times_{l_{\prec} +  l_{\succ}, r_{\prec} + r_{\succ}, l_\ast-(r_\ast\circ\a_1\a_2^{-1})\b_1^{-1}\b_2,\alpha_{1}, \alpha_{2}, \beta_{1}, \beta_{2}} V.$$
 \end{enumerate}
\end{cor}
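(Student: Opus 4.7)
My plan is to prove the three parts in the order (2), (1), (3). Once (2) is established, parts (1) and (3) follow quickly by invoking Corollary \ref{coroo} together with the elementary observation that the passage $(l_{\prec}^{\mathrm{new}}, r_{\prec}^{\mathrm{new}}, l_{\succ}^{\mathrm{new}}, r_{\succ}^{\mathrm{new}}) \mapsto (l_{\prec}^{\mathrm{new}} + l_{\succ}^{\mathrm{new}}, r_{\prec}^{\mathrm{new}} + r_{\succ}^{\mathrm{new}})$ yields the same pair $(l_\prec + l_\succ, r_\prec + r_\succ)$ whether one starts from the original tuple or from either of the degenerate tuples $(l_\prec + l_\succ, 0, 0, r_\prec + r_\succ)$ or $(l_\prec, 0, 0, r_\succ)$.

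For part (2), I would verify the axioms of Definition \ref{def bim} --- the nine BiHom-dendriform bimodule axioms together with the nine compatibility identities (\ref{1001})-(\ref{1009}) --- for each of the two proposed $9$-tuples. Since the BiHom-pre-Lie data $(l_\ast, r_\ast, \beta_1, \beta_2, V)$ is left unchanged, the BiHom-pre-Lie bimodule axioms of the new tuple hold automatically. Many of the BiHom-dendriform axioms become trivial because both sides involve $r_{\prec}^{\mathrm{new}} = 0$ or $l_{\succ}^{\mathrm{new}} = 0$. The remaining axioms either reduce, after a linear combination, to axioms that hold in the original bimodule $(l_\prec, r_\prec, l_\succ, r_\succ)$, or they impose identities such as $l_\prec(\alpha_1(y)) r_\succ(x) v = 0$ and $r_\succ(\alpha_2(x)) l_\prec(y) v = 0$ which must be rederived from the pre-Poisson compatibility equations (\ref{eq:pre-Poisson 1})-(\ref{eq:pre-Poisson 3}) combined with (\ref{1001})-(\ref{1009}). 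The same substitute-and-combine strategy handles the nine compatibility identities.

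For part (1), I apply Corollary \ref{coroo} to the bimodule $(l_\prec, 0, 0, r_\succ, l_\ast, r_\ast, \beta_1, \beta_2, V)$ of the pre-Poisson algebra provided by (2). Corollary \ref{coroo} produces a representation of the associated noncommutative BiHom-Poisson algebra with left action $l_\prec + 0 = l_\prec$, right action $0 + r_\succ = r_\succ$, and Lie representation $l_\ast - (r_\ast \circ \alpha_1\alpha_2^{-1})\beta_1^{-1}\beta_2$, which is exactly the $6$-tuple asserted in (1). For part (3), I apply Corollary \ref{coroo} once to the original bimodule and once to the modified bimodule $(l_\prec + l_\succ, 0, 0, r_\prec + r_\succ, l_\ast, r_\ast, \beta_1, \beta_2, V)$; both outputs produce the same left action $l_\prec + l_\succ$, the same right action $r_\prec + r_\succ$, and the same Lie action $l_\ast - (r_\ast \circ \alpha_1 \alpha_2^{-1})\beta_1^{-1}\beta_2$, so the two semi-direct BiHom-pre-Poisson algebras share the same underlying noncommutative BiHom-Poisson algebra.

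The main obstacle is certainly the first step: closing those dendriform bimodule axioms whose reduction under the substitution $r_\prec^{\mathrm{new}} \mapsto 0,\; l_\succ^{\mathrm{new}} \mapsto 0$ is not automatic from the dendriform axioms of the original bimodule. I expect that the full pre-Poisson compatibility (\ref{eq:pre-Poisson 1})-(\ref{eq:pre-Poisson 3}) together with the intertwining relations (\ref{1001})-(\ref{1009}) must be used jointly to close these identities, so the argument will not be purely dendriform-theoretic.
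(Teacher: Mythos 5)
Your architecture --- prove (2) by direct verification, then obtain (1) and (3) by pushing the degenerate tuples through Corollary \ref{coroo} --- is the natural one, and since the paper's entire proof is the sentence ``It results from a direct computation,'' there is nothing more specific to compare it to; the bookkeeping by which you deduce (1) and (3) from (2) is fine. The problem is located exactly at the step you yourself identify as the main obstacle, and it is not an obstacle that can be overcome: the identities you propose to ``rederive,'' such as $l_\prec(\alpha_1(y))r_\succ(x)v=0$ or $l_\prec(\alpha_1(x))l_\succ(y)v=0$, are not consequences of (\ref{eq:pre-Poisson 1})--(\ref{eq:pre-Poisson 3}) together with (\ref{1001})--(\ref{1009}); they are false in general. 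For the regular bimodule $(L_\prec,R_\prec,L_\succ,R_\succ,L_\ast,R_\ast,\alpha_1,\alpha_2,A)$ one has $l_\prec(\alpha_1(y))r_\succ(x)v=\alpha_1(y)\prec(v\succ x)$, which does not vanish for a generic noncommutative BiHom-pre-Poisson algebra with both products nonzero. So the plan for part (2), as written, asks you to prove statements that already fail in the most basic example.

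The underlying issue is a convention clash in the statement itself, which you have taken at face value. In the ordering $(l_\prec,r_\prec,l_\succ,r_\succ)$ fixed by Definition \ref{def bim}, the assignment new-$l_\prec=l_\prec+l_\succ$, new-$r_\succ=r_\prec+r_\succ$ already violates the first BiHom-dendriform bimodule axiom, because that axiom controls $l_\prec(x\prec y)$ while the associative bimodule axiom coming from Proposition \ref{propa2} controls $l_\cdot(x\cdot y)$, and these differ by $l_\cdot(x\succ y)\beta_2(v)$. The tuples that genuinely are bimodules are $(0,\,r_\prec+r_\succ,\,l_\prec+l_\succ,\,0,\,l_\ast,r_\ast,\beta_1,\beta_2,V)$ and $(0,\,r_\prec,\,l_\succ,\,0,\,l_\ast,r_\ast,\beta_1,\beta_2,V)$: once new-$l_\prec=$ new-$r_\succ=0$, each of the nine dendriform axioms either has both sides equal to zero or reduces verbatim to an original axiom or to an associative bimodule axiom, with no appeal to the pre-Poisson compatibilities (those are still needed, but only to verify (\ref{1001})--(\ref{1009}) for the new tuple). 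Correspondingly, in part (1) the pair that satisfies the BiHom-associative bimodule axioms is $(l_\succ,r_\prec)$ --- these axioms are precisely the dendriform axioms governing $l_\succ(x\cdot y)$, $r_\prec(x\cdot y)$ and the mixed relation --- not $(l_\prec,r_\succ)$. You should therefore first repair the statement (swap the roles of the $\prec$ and $\succ$ slots, matching the $(l_\succ,r_\succ,l_\prec,r_\prec)$ ordering of the sources this corollary is adapted from) and only then run your verification; with that correction your reduction of (1) and (3) to (2) via Corollary \ref{coroo} goes through, and the dendriform part of (2) becomes a triviality rather than the hard step you anticipate.
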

\begin{proof}
It results from a direct computation.
\end{proof}
The following result gives a construction of a bimodule of a BiHom-pre-Poisson algebra by means of the Yau twist procedure
\begin{thm}\label{mamm1}
Let $(A,\prec, \succ,\ast,\alpha_1,\alpha_2)$ be a noncommutative BiHom-pre-Poisson algebra,\\ $(l_{\prec},r_{\prec},l_{\succ},r_{\succ},l_{\ast},r_{\ast},\beta_1,\beta_2,V)$ be a bimodule of $A$. Let $\alpha'_1,\alpha'_2$ be two endomorphisms of $A$ such that any two of the maps $\alpha_1,\alpha'_1,\alpha_2,\alpha'_2$ commute
and $\beta'_1,~\beta'_2$ be linear maps of $V$ such that any two of the maps $\beta_1,\beta'_1,\beta_2,\beta'_2$ commute. Suppose furthermore that
$$\left\{
   \begin{array}{lllllll}
    \beta'_1\circ l_\prec=(l_\prec\circ\alpha'_1)\beta'_1,~~
     \beta'_2\circ l_\prec=(l_\prec\circ\alpha'_2)\beta'_2,& \\
         \beta'_1\circ l_\succ=(l_\succ\circ\alpha'_1)\beta'_1,~~
     \beta'_2\circ l_\succ=(l_\succ\circ\alpha'_2)\beta'_2,&\\
      \beta'_1\circ l_\ast=(l_\ast\circ\alpha'_1)\beta'_1,~~
     \beta'_2\circ l_\ast=(l_\ast\circ\alpha'_2)\beta'_2,& \\
   \end{array}
 \right.$$
~~and~~
$$\left\{
   \begin{array}{lllllll}
    \beta'_1\circ r_\prec=(r_\prec\circ\alpha'_1)\beta'_1,~~
     \beta'_2\circ r_\prec=(r_\prec\circ\alpha'_2)\beta'_2,& \\
         \beta'_1\circ r_\succ=(r_\succ\circ\alpha'_1)\beta'_1,~~
     \beta'_2\circ r_\succ=(r_\succ\circ\alpha'_2)\beta'_2,&\\
      \beta'_1\circ r_\ast=(r_\ast\circ\alpha'_1)\beta'_1,~~
     \beta'_2\circ r_\ast=(r_\ast\circ\alpha'_2)\beta'_2,&
   \end{array}
 \right.$$

and write $A_{\alpha'_1,\a'_2}$ for the noncommutative BiHom-pre-Poisson algebra $(A,\prec_{\alpha'_1,\alpha'_2}, \succ_{\alpha'_1,\alpha'_2},\ast_{\alpha'_1,\alpha'_2},\alpha_1\alpha'_1,\alpha_2\alpha'_2)$ and
$V_{\beta'_1,\beta'_2}=(\widetilde{l}_{\prec},\widetilde{r}_{\prec},\widetilde{l}_{\succ},
\widetilde{r}_{\succ},\widetilde{l}_{\ast},\widetilde{r}_{\ast},\beta_1\beta'_1,\beta_2\beta'_2,V)$, where
\begin{align*}
&\widetilde{l}_{\prec}=(l_{\prec}\circ\alpha'_1)\beta'_2,~\widetilde{r}_{\prec}=(r_{\prec}\circ\alpha'_2)\beta'_1,~\widetilde{l}_{\succ}=(l_{\succ}\circ\alpha'_1)\beta'_2,\\&
\widetilde{r}_{\succ}=(r_{\succ}\circ\alpha'_2)\beta'_1,~\widetilde{l}_{\ast}=(l_{\ast}\circ\alpha'_1)\beta'_2,~\widetilde{r}_{\ast}=(r_{\ast}\circ\alpha'_2)\beta'_1.
\end{align*}
Then $V_{\beta'_1,\beta'_2}$ is a bimodule of $A_{\alpha'_1,\alpha'_2}$.
\end{thm}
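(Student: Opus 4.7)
The plan is to verify, one by one, all the defining axioms of a bimodule of a noncommutative BiHom-pre-Poisson algebra (Definition \ref{def bim}) for the twisted data $V_{\beta'_1,\beta'_2}$ sitting over $A_{\alpha'_1,\alpha'_2}$. These axioms split naturally into three blocks: the BiHom-pre-Lie bimodule axioms governing $(\widetilde{l}_\ast,\widetilde{r}_\ast)$, the BiHom-dendriform bimodule axioms governing $(\widetilde{l}_\prec,\widetilde{r}_\prec,\widetilde{l}_\succ,\widetilde{r}_\succ)$, and the nine mixed compatibility axioms (\ref{1001})–(\ref{1009}).

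First I would dispose of the two structural halves. These are the bimodule-level analogues of the Yau twist of Theorem \ref{Yaupre-poiss}: for each axiom, expand the twisted operations in terms of the original ones, slide every $\alpha'_i$ and $\beta'_j$ through the original structure maps using the pairwise commutativity of $\alpha_1,\alpha'_1,\alpha_2,\alpha'_2$ (respectively $\beta_1,\beta'_1,\beta_2,\beta'_2$) together with the intertwining hypotheses $\beta'_i\circ l_\bullet=(l_\bullet\circ\alpha'_i)\beta'_i$ and $\beta'_i\circ r_\bullet=(r_\bullet\circ\alpha'_i)\beta'_i$, invoke the corresponding original bimodule axiom, and then collapse the result back into the twisted notation. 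This pattern is identical to the one used in the proof of Theorem \ref{Yaupre-poiss}.

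Next I would tackle the nine coupling axioms (\ref{1001})–(\ref{1009}) by exactly the same template. Concretely, in a target identity every occurrence of $\prec,\succ,\ast$ becomes $\prec\circ(\alpha'_1\otimes\alpha'_2)$, etc., and every $l_\bullet, r_\bullet$ becomes $(l_\bullet\circ\alpha'_i)\beta'_j$ or $(r_\bullet\circ\alpha'_i)\beta'_j$; after expansion and application of the commutation/equivariance relations, all auxiliary twists $\alpha'_i,\beta'_j$ collect on the outside and what remains is the original untwisted axiom (\ref{1001})–(\ref{1009}) evaluated at elements of the form $\alpha'_i\alpha'_j(x)$, $\alpha'_k(y)$ acting on $\beta'_\ell(v)$, which holds by hypothesis. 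I would write out the computation for (\ref{1001}) as a prototype and observe that the remaining eight identities are verified by the same mechanism, in complete parallel with the scheme of Theorem \ref{Yaupre-poiss}.

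The main obstacle is purely bookkeeping: one must check in each of the nine identities that the total multiplicities of $\alpha'_1,\alpha'_2,\beta'_1,\beta'_2$ appearing on the two sides after the sliding agree, so that the reduction to the original axiom is legitimate. The commutativity and intertwining hypotheses are calibrated precisely so that this matching always closes, but a small miscount of twist powers in any single identity would break the argument; for that reason my write-up would execute the prototype (\ref{1001}) in full and summarise the remaining cases by indicating only the original axiom to which each reduces.
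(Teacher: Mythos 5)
Your proposal matches the paper's own proof: the paper likewise verifies a single prototype axiom (the twisted analogue of (\ref{1001})) by expanding the twisted operations, sliding the auxiliary maps $\alpha'_i,\beta'_j$ through via the commutativity and intertwining hypotheses, invoking the original axiom, and recollapsing into the twisted notation, then declares the remaining axioms analogous. Your approach and level of detail are essentially identical, so nothing further is needed.
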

\begin{proof}
We prove only one axiom. The others being proved similarly. For any
$x,y\in A$ and $v\in V$, we have
\begin{align*}
   & \widetilde{l_\prec}(\{\a_2\a'_2(x),\a_1\a'_1(y)\}_{\a'_1,\a'_2}\b_2\b'_2(v)\\=& \widetilde{l_\prec}(\{\a_2\a'_2\a'_1(x),\a_1\a'_1\a'_2(y)\}\b_2\b_{2}^{'2}(v)\\=& l_\prec(\{\a_2\a'_2\a_{1}^{'2}(x),\a_1\a_{1}^{'2}\a'_2(y)\}\b_{2}^{'2}(v)\\
    =&l_\ast(\a_1\a_2\a_{1}^{'2}\a'_2(x))l_\prec(\a_1\a_{1}^{'2}\a'_2(y))\b_{2}^{'2}(v)-l_\prec(\a_1\a_2\a_{1}^{'2}\a'_{2}(y))l_\ast(\a_1\a_1^{'2}\a'_{2}(x))\b_2^{'2}(v)~(by~(\ref{1001}))\\
    =&\widetilde{l_\ast}(\a_1\a'_{1}\a_2\a'_{2}(x))\widetilde{l_\prec}(\a_1\a'_{1}(y))v-\widetilde{l_\prec}(\a_1\a'_{1}\a_2\a'_{2}(y))l_\ast(\a_1\a'_{1}(x))v
\end{align*}
\end{proof}
Taking $\alpha'_1=\alpha^{p_1}_{1},~\alpha'_2=\alpha^{p_2}_{2}$ and $\beta'_1=\beta^{q_1}_{1},~\beta'_2=\beta^{q_2}_{2}$ leads to the following statement:
\begin{cor}
Let $(A,\prec, \succ,\ast,\alpha_1,\a_2)$ be a noncommutative BiHom-pre-Poisson algebra
$(l_{\prec},r_{\prec},l_{\succ},r_{\succ},l_{\ast},r_{\ast},\beta_1,\b_2,V)$ a bimodule of
$A$. Then $V_{\beta_1^{q_1},\beta_2^{q_2}}$ is a bimodule of $A_{\alpha_1^{p_1},\alpha_1^{p_2}}$ for any
nonnegative integers $p_1,~p_2,~q_1$ and $q_2$.
\end{cor}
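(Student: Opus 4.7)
The plan is to obtain this corollary as a direct specialization of Theorem \ref{mamm1}, choosing $\alpha'_1=\alpha_1^{p_1}$, $\alpha'_2=\alpha_2^{p_2}$, $\beta'_1=\beta_1^{q_1}$, $\beta'_2=\beta_2^{q_2}$. So my first step is to note that the conclusion of the corollary is literally the statement of Theorem \ref{mamm1} for this choice; what needs to be checked is that the hypotheses of Theorem \ref{mamm1} hold in this special situation.

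Next I would verify the commutation assumptions. Since $\alpha_1\alpha_2=\alpha_2\alpha_1$ in any BiHom-algebra, any powers $\alpha_1^{p_1}$ and $\alpha_2^{p_2}$ commute both with each other and with $\alpha_1$, $\alpha_2$ themselves, so any two of $\alpha_1,\alpha_2,\alpha'_1,\alpha'_2$ commute. The same argument applied to $\beta_1\beta_2=\beta_2\beta_1$ on $V$ gives the corresponding fact for $\beta_1,\beta_2,\beta'_1,\beta'_2$.

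The only substantive step is the equivariance of the structure maps with respect to $\alpha'_i$ and $\beta'_i$. For this I would iterate the bimodule axioms (equivariance relations such as $\beta_1\circ l_\prec(x)=l_\prec(\alpha_1(x))\circ\beta_1$, and similarly for $\beta_2$, $r_\prec$, $l_\succ$, $r_\succ$, $l_\ast$, $r_\ast$). Applying the $\beta_1$-equivariance $q_1$ times and the $\beta_2$-equivariance $q_2$ times, and using that $\alpha_i$ and $\beta_j$ already commute past each component map, one obtains exactly the required identities
\[
\beta'_i\circ l_\star=(l_\star\circ\alpha'_i)\beta'_i,\qquad \beta'_i\circ r_\star=(r_\star\circ\alpha'_i)\beta'_i,
\]
for $\star\in\{\prec,\succ,\ast\}$ and $i\in\{1,2\}$. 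I do not expect any real obstacle here, as these are just the evident induction on $p_i$ and $q_i$ using the base equivariance axioms in Definition~\ref{def bim}.

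Once the hypotheses of Theorem \ref{mamm1} are verified, the conclusion is immediate: $V_{\beta_1^{q_1},\beta_2^{q_2}}$, with twisted actions $\widetilde{l}_\prec=(l_\prec\circ\alpha_1^{p_1})\beta_2^{q_2}$, $\widetilde{r}_\prec=(r_\prec\circ\alpha_2^{p_2})\beta_1^{q_1}$, and analogously for $\succ$ and $\ast$, is a bimodule of $A_{\alpha_1^{p_1},\alpha_2^{p_2}}$. Hence no new computation is required; the corollary is a specialization.
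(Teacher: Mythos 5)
Your proposal is correct and matches the paper's own treatment: the corollary is obtained precisely by specializing Theorem \ref{mamm1} with $\alpha'_1=\alpha_1^{p_1}$, $\alpha'_2=\alpha_2^{p_2}$, $\beta'_1=\beta_1^{q_1}$, $\beta'_2=\beta_2^{q_2}$. Your extra verification that the commutation and equivariance hypotheses follow by iterating the multiplicativity and bimodule axioms is a welcome (if routine) addition that the paper leaves implicit.
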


Let ($ l_{\prec}, r_{\prec}, l_{\succ}, r_{\succ},l_{\diamond}, r_{\diamond}, \beta_{1}, \beta_{2}, V$) be a bimodule of a noncommutative BiHom-pre-Poisson algebra $(A, \prec, \succ,\diamond, \alpha_{1}, \alpha_{2})$ and let $l_{\prec}^{\ast}, r_{\prec}^{\ast}, l_{\succ}^{\ast}, r_{\succ}^{\ast},l_{\diamond}^{\ast}, r_{\diamond}^{\ast}:A\rightarrow gl(V^{\ast}),$ furthermore $\alpha_1^{\ast},\alpha_{2}^{\ast}:A^{\ast}\rightarrow A^{\ast},~~\beta_{1}^{\ast},\beta_{2}^{\ast}:V^{\ast}\rightarrow V^{\ast}$ be the dual maps of respectively $\alpha_1,\alpha_2,\beta_1$ and $\beta_2$ such that
$$\begin{array}{llllllll}
  \langle l_{\prec}^{\ast}(x)u^{\ast},v\rangle =\langle u^{\ast},l_{\prec}(x)v\rangle,&& \langle r^{\ast}_{\prec}(x)u^{\ast},v\rangle =\langle u^{\ast},r_{\prec}(x)v\rangle\\
   \langle l_{\succ}^{\ast}(x)u^{\ast},v\rangle =\langle u^{\ast},l_{\succ}(x)v\rangle,&& \langle r^{\ast}_{\succ}(x)u^{\ast},v\rangle =\langle u^{\ast},r_{\succ}(x)v\rangle\\
    \langle l_{\diamond}^{\ast}(x)u^{\ast},v\rangle =\langle u^{\ast},l_{\diamond}(x)v\rangle,&& \langle r^{\ast}_{\diamond}(x)u^{\ast},v\rangle =\langle u^{\ast},r_{\diamond}(x)v\rangle\\
    \alpha_{1}^{\ast}(x^{\ast}(y))=x^{\ast}(\alpha_{1}(y)),&& \alpha_{2}^{\ast}(x^{\ast}(y))=x^{\ast}(\alpha_{2}(y))\\
     \beta_{1}^{\ast}(u^{\ast}(v))=u^{\ast}(\beta_{1}(v)),&& \beta_{2}^{\ast}(u^{\ast}(v))=u^{\ast}(\beta_{2}(v))
\end{array}$$
\begin{prop}
Let ($ l_{\prec}, r_{\prec}, l_{\succ}, r_{\succ},l_{\diamond}, r_{\diamond}, \beta_{1}, \beta_{2}, V$) be a bimodule of a noncommutative BiHom-pre-Poisson algebra $(A, \prec, \succ,\diamond, \alpha_{1}, \alpha_{2})$. Then ($ l_{\prec}^{\ast}, r_{\prec}^{\ast}, l_{\succ}^{\ast}, r_{\succ}^{\ast},l_{\diamond}^{\ast}, r_{\diamond}^{\ast}, \beta_{1}^{\ast}, \beta_{2}^{\ast}, V^{\ast}$) is a bimodule of
$(A, \prec, \succ,\diamond,\alpha_{1}, \alpha_{2})$ provided that
\begin{eqnarray}
&&\b_2(l_\diamond(\a_2(x)\cdot\a_1(y))u=\b_1 l_\diamond(\a_2(x))r_\succ(a_2(y))u+l_\diamond(\a_1(y))l_\prec(\a_1\a_2(x))u,\\
&&\b_1 l_\cdot(\a_2(y))r_\diamond(\a_2(x))u=\b_2(l_\succ(\a_2(y)\diamond\a_1(x)))u+\b_1 r_\diamond(x)l_\prec(\a_1\a_2(y))u,\\
&&\b_2r_\cdot(\a_1(y))r_\diamond(\a_2(x))u=\b_2r_\diamond(\a_1(x))r_\succ(\a_2(y))u+\b_1\b_2(r_\prec(\a_1(y)\diamond x))u,\\
&&\b_1\rho(\a_1\a_2(y))l_\succ(\a_2(x))u=\b_1l_\succ(x)l_\diamond(\a_1\a_2^{2}(y))u-\b_1\b_2(l_\succ(\a_2^{2}(y)\diamond z))u,\\
&&\b_2(r_\succ(\{\a_1\a_2(x),\a_1(y)\}))u=r_\succ(\a_1(y))l_\diamond(\a_1\a_2^{2}(x))u-l_\diamond(\a_2^{2}(y))r_\succ(\a_1\a_2(y))u,\\
&&-\b_1^{2}\rho(\a_2(y))l_\succ(\a_2(x))u=\b_1\b_2^{2}(r_{\diamond}(x\succ\a_1(y)))u-\b_2^{2}r_\diamond(x)r_\succ(\a_1\a_2(y))u,\\
&&\b_2(l_\prec(\{\a_2(x),\a_1(y)\})u=l_\prec(\a_1(y))l_\diamond(\a_1\a_2(x))u-l_\diamond(\a_1(x))l_\prec(\a_1\a_2(y))u,\\
&&\b_1\rho(\a_2(y))r_\prec(\a_2(x))u=\b_1r_\prec(x)l_\diamond(\a_1\a_2(y))u-\b_1\b_2(r_\prec(\a_1(y)\diamond x))u,\\
&&-\b_1\rho(\a_2(y))r_\prec(\a_2(x))u=\b_1\b_2(r_\diamond(\a_1(y)\prec x))u-\b_1 r_\diamond(x)l_{\prec}(\a_1\a_2(y))(v),
\end{eqnarray}
for all $x,y\in A$ and $u\in V$.
\end{prop}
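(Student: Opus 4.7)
The plan is to verify each axiom of Definition \ref{def bim} for the proposed dual $9$-tuple $(l_{\prec}^{\ast}, r_{\prec}^{\ast}, l_{\succ}^{\ast}, r_{\succ}^{\ast},l_{\diamond}^{\ast}, r_{\diamond}^{\ast}, \beta_{1}^{\ast}, \beta_{2}^{\ast}, V^{\ast})$ by testing each candidate identity against an arbitrary $v\in V$ via the pairing. Concretely, for every axiom one wants to prove on $V^{\ast}$, I would apply both sides to $u^{\ast}\in V^{\ast}$, pair with $v\in V$, and move all the stars across the pairing using the defining relations $\langle l_{\star}^{\ast}(x)u^{\ast},v\rangle=\langle u^{\ast},l_{\star}(x)v\rangle$, $\langle r_{\star}^{\ast}(x)u^{\ast},v\rangle=\langle u^{\ast},r_{\star}(x)v\rangle$, $\langle \beta_{i}^{\ast}u^{\ast},v\rangle=\langle u^{\ast},\beta_{i}(v)\rangle$ and $\langle \alpha_{i}^{\ast}x^{\ast},y\rangle=\langle x^{\ast},\alpha_{i}(y)\rangle$. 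After this translation, each axiom on the dual becomes an identity inside $V$, which I can then hope to match against either an axiom satisfied by the original bimodule or a listed hypothesis.

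I would organise the verification in three blocks matching the three layers of Definition \ref{def bim}: the BiHom-dendriform bimodule axioms, the BiHom-pre-Lie bimodule axioms, and the mixed compatibility axioms (\ref{1001})--(\ref{1009}). For the intertwining axioms involving only $\beta_{i}^{\ast}$ and a single $l_{\star}^{\ast}$ or $r_{\star}^{\ast}$, the verification is immediate from the analogous intertwining relations for $V$ plus the identities $\alpha_{i}^{\ast}\circ l_{\star}^{\ast}(x)=l_{\star}^{\ast}(\alpha_{i}(x))\circ\alpha_{i}^{\ast}$ obtained by duality. For the remaining axioms, dualising produces expressions in which the order of the factors on $V$ is reversed; inspecting the resulting identities one sees that they coincide, after re-labelling $u^{\ast}\leftrightarrow v$, with the $9$ hypotheses listed in the statement of the proposition, since those hypotheses are precisely the pre-dualised forms of (\ref{1001})--(\ref{1009}).

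The main obstacle, and really the only delicate point, will be bookkeeping: keeping straight how the twists $\alpha_1,\alpha_2,\beta_1,\beta_2$ distribute across dualised composites, because they are neither inverses nor do they enjoy the simplifications available in the Hom case. To contain this, I would first record once and for all the list of ``commutation lemmas'' such as $\beta_{i}^{\ast}(l_{\star}^{\ast}(x)u^{\ast})=l_{\star}^{\ast}(\alpha_{i}(x))\beta_{i}^{\ast}(u^{\ast})$ and their right analogues, and then apply them mechanically so that each of the dual axioms reduces, after a finite number of moves, to exactly one of the hypotheses displayed in the statement, thereby completing the proof.
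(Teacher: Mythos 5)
Your overall strategy --- apply each candidate axiom for the dual tuple to $u^{\ast}$, pair with an arbitrary $v\in V$, transpose everything across the pairing using $\langle l_{\star}^{\ast}(x)u^{\ast},v\rangle=\langle u^{\ast},l_{\star}(x)v\rangle$ and its companions, and match the resulting order-reversed identities on $V$ against the listed hypotheses --- is certainly the argument the paper intends: its own proof consists of the single word ``Straightforward.'' Your identification of the nine displayed hypotheses as exactly the pre-dualised forms of the mixed compatibility axioms (\ref{1001})--(\ref{1009}) is correct; for instance, pairing $l_{\prec}^{\ast}(\{\alpha_2(x),\alpha_1(y)\})\beta_2^{\ast}(u^{\ast})$ with $v$ and transposing produces precisely the seventh listed condition.

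The gap is in the step where you assert that \emph{each} of the remaining dual axioms ``reduces, after a finite number of moves, to exactly one of the hypotheses displayed in the statement.'' A bimodule in the sense of Definition \ref{def bim} must also satisfy the nine BiHom-dendriform bimodule axioms and the two BiHom-pre-Lie bimodule axioms, and these dualise to order-reversed identities that coincide neither with the original axioms nor with any of the nine listed hypotheses. Concretely, the dual of $l_{\prec}(x\prec y)\beta_{2}(v)=l_{\prec}(\alpha_{1}(x))l_{\cdot}(y)v$ requires $\beta_{2}\,l_{\prec}(x\prec y)=l_{\cdot}(y)\,l_{\prec}(\alpha_{1}(x))$ on $V$, which is not available. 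Even the intertwining axioms are not ``immediate'' as you claim: $\langle l_{\prec}^{\ast}(\alpha_{1}(x))\beta_{1}^{\ast}(u^{\ast}),v\rangle=\langle u^{\ast},\beta_{1}(l_{\prec}(\alpha_{1}(x))v)\rangle=\langle u^{\ast},l_{\prec}(\alpha_{1}^{2}(x))\beta_{1}(v)\rangle$, whereas $\langle \beta_{1}^{\ast}(l_{\prec}^{\ast}(x)u^{\ast}),v\rangle=\langle u^{\ast},l_{\prec}(x)\beta_{1}(v)\rangle$, so the two sides differ by $\alpha_{1}^{2}$ unless the structure maps are invertible and the dual maps are twisted accordingly (as is standard for dual (bi)modules in the Hom/BiHom setting, where one also typically permutes $l^{\ast}$ and $r^{\ast}$). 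So as written your argument stalls at the dendriform, pre-Lie, and intertwining layers; either further hypotheses must be supplied or the dual structure maps must be redefined with twists. This defect is inherited from the statement of the proposition itself, which the paper does not confront since it supplies no proof, but you should flag it rather than claim the reduction closes.
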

\begin{proof}
Straightforward.
\end{proof}
\begin{thm}
Let $(A, \prec_{A}, \succ_{A},\ast_A, \alpha_1,\a_2)$ and $(B, \prec_{B}, \succ_{B}, \ast_B,\beta_1,\b_2)$
 be two noncommutative BiHom-pre-Poisson algebra. Suppose that there are linear maps
$ l_{\prec_{A}}, r_{\prec_{A}},  l_{\succ_{A}},  r_{\succ_{A}},l_{\ast_A},r_{\ast_A} : A \rightarrow gl(B),$
and $ l_{\prec_{B}},   r_{\prec_{B}},  l_{\succ_{B}},  r_{\succ_{B}},l_{\ast_B},r_{\ast_B} : B \rightarrow gl(A)$
such that
$A\bowtie^{l_{\ast_A},r_{\ast_A},\beta_1,\beta_2}_{l_{\ast_B},r_{\ast_B},\alpha_1,\alpha_2}B$ is a matched pair of BiHom-pre-Lie algebras and $A\bowtie^{l_{\prec_A},r_{\prec_A},l_{\succ_A},r_{\succ_A},\beta_1,\beta_2}_{l_{\prec_B},r_{\prec_B},l_{\succ_B},r_{\succ_B},\alpha_1,\alpha_2}B$ is a matched pair of BiHom-dendriform algebra and for all $x, y \in A,~ a, b \in
B$, the following equalities hold:
       \begin{eqnarray}
&&-l_{\prec_A}(\rho_B(\b_2(a))\a_1(x))\b_2(b)+\rho_A(\a_2(x))\b_1(a)\prec_B\b_2(b)\nonumber\\&=&l_{\ast_A}(\a_1\a_2(x))(\b_1(a)\prec_B b)-\b_1\b_2(a)\prec_B(l_{\ast_A}(\a_1(x))b)\nonumber\\&&-r_{\prec_A}(r_{\ast_B}(b)\a_1(x))\b_1\b_2(a),\label{301}\\
&&l_{\prec_A}(\rho_B(\b_2(a))\a_1(x))\b_2(b)-(\rho_A(\a_2(x))\b_1(a))\prec_B\b_2(b)\nonumber\\
&=&\b_1\b_2(a)\ast_B\rho(\a_1(x))b+r_{\ast_A}(r_{\prec_B}(b)\a_1(x))\b_1\b_2(a)\nonumber\\&&-l_{\prec_A}(\a_1\a_2(x))(\b_1(a)\ast_B b),\label{302}\\
&&r_{\prec_A}(\a_2(x))(\{\b_2(a),\b_1(b)\}_B)=\b_1\b_2(a)\ast_B(r_{\prec_A}(x)\b_1(b))\nonumber\\&&+r_{\ast_A}(l_{\prec_B}(\b_1(b))x)\b_1\b_2(a)-l_{\prec_A}(\a_1\a_2(x))(\b_1(a)\ast_B b),\label{303}\\
&&l_{\succ_A}(\a_2(x))\{\b_1\b_2(a),\b_1(b)\}=\b_1\b_2^{2}(a)\ast_B(l_{\prec_A}(x)\b_1(b))\nonumber\\&&+r_{\ast_A}(r_{\prec_B}(\b_1(b))x)\b_1\b_2^{2}(a)-(r_{\ast_A}(x)\b_2^{2}(a))\prec_B\b_1\b_2(b)\nonumber\\&&-l_{\prec_A}(l_{\ast_A}(\b_2^{2}(a))x)\b_1\b_2(b),\label{304}\\
&&\b_2(a)\prec_B(\rho_A(\a_1\a_2(x)))\b_1(b)-r_{\prec_A}(\rho_B(\b_2(b))\a_1^{2}(x))\b_2(a)\nonumber\\&=&l_{\ast_A}(\a_1\a_2^{2}(x))(a\succ_B\b_1(b))-(l_{\ast_A}(\a_2^{2}(x))a)\prec_A\b_1\b_2(b)\nonumber\\&&-l_{\succ_A}(r_{\ast_B}(a)\a_2^{2}(x))\b_1\b_2(b),\label{305}\\
&&-\b_2(a)\succ_B(\rho(\a_2(x))\a_1^{2}(b))+r_{\succ_A}(\rho(\b_1\b_2(b))\a_1(x))\b_2(a)\nonumber\\&=&\b_1\b_2^{2}(b)\ast_B(r_{\succ_A}(\a_1(x))b)+r_{\ast_A}(l_{\succ_B}(a)\a_1(x))\nonumber\\&&-r_{\succ_A}(\a_1\a_2(x))(\b_2^{2}(b)\ast_B a),\label{306}\\
&&(l_{\cdot_B}(\a_2(x))\b_1(a))\ast_B\b_{2}(b)+l_{\ast_A}(r_{\cdot_B}(\b_1(a))\a_2(x))\b_2(b)\nonumber\\&=&(l_{\ast_A}(\a_2(x))\b_1(b))\succ_B\b_2(a)+l_{\succ_A}(r_{\ast_B}(\b_1(b))\a_2(x))\b_2(a)\nonumber\\&&+l_{\prec_A}(\a_1\a_2(x))(\b_1(a)\ast_B b),\label{307}\\
&&l_{\ast_A}(r_{\cdot_A}(\a_1(x))\b_2(a))\b_2(b)+(r_{\cdot_A}(\a_1(x))\b_2(a))\ast_B\b_2(b)\nonumber\\&=&r_{\succ_A}(\a_2(x))(\b_2(a)\ast_B\b_1(b))+\b_1\b_2(a)\prec_B(l_{\ast_A}(\a_1(x))b)\nonumber\\&&+r_{\prec_A}(r_{\ast_B}(b)\a_1(x))\b_1\b_2(a),\label{308}\\
&&r_{\ast_A}(\a_2(x))(\b_2(a)\cdot_B\b_1(b))=(r_{\ast_A}(\a_1(x))\b_2(a))\succ_B\b_2(b)\nonumber\\&&+l_{\succ_A}(l_{\ast_B}(\b_2(a))\a_1(x))\b_2(b)+\b_1\b_2(a)\prec_B(r_{\ast_A}(x)\b_1(b))\nonumber\\&&+r_{\prec_A}(l_{\ast_B}(\b_2(a))\a_1(x))\b_2(b)+\b_1\b_2(a)\prec_B(r_{\ast_A}(x)\b_1(b))\nonumber\\&&+r_{\prec_A}(l_{\ast_B}(\b_1(b))x)\b_1\b_2(x),\label{309}\\
&&-l_{\prec_B}(\rho_A(\a_2(x))\b_1(a))\a_2(y)+\rho_B(\b_2(a))\a_1(x)\prec_A\a_2(y)\nonumber\\&=&l_{\ast_B}(\b_1\b_2(a))(\a_1(x)\prec_A y)-\a_1\a_2(x)\prec_A(l_{\ast_B}(\b_1(a))y)\nonumber\\&&-r_{\prec_B}(r_{\ast_A}(y)\b_1(a))\a_1\a_2(x),\label{310}\\
&&l_{\prec_B}(\rho_A(\a_2(x))\b_1(a))\a_2(y)-(\rho_B(\b_2(a))\a_1(x))\prec_A\a_2(y)\nonumber\\
&=&\a_1\a_2(x)\ast_A\rho(\b_1(a))y+r_{\ast_B}(r_{\prec_A}(y)\b_1(a))\a_1\a_2(x)\nonumber\\&&-l_{\prec_B}(\b_1\b_2(a))(\a_1(x)\ast_A y),\label{311}\\
&&r_{\prec_B}(\b_2(a))(\{\a_2(x),\a_1(y)\}_A)=\a_1\a_2(x)\ast_A(r_{\prec_B}(a)\a_1(y))\nonumber\\&&+r_{\ast_B}(l_{\prec_A}(\a_1(y))a)\a_1\a_2(x)-l_{\prec_B}(\b_1\b_2(a))(\a_1(x)\ast_A y),\label{312}\\
&&l_{\succ_B}(\b_2(a))\{\a_1\a_2(x),\a_1(y)\}=\a_1\a_2^{2}(x)\ast_A(l_{\prec_B}(x)\a_1(y))\nonumber\\&&+r_{\ast_B}(r_{\prec_A}(\a_1(y))a)\a_1\a_2^{2}(x)-(r_{\ast_B}(a)\a_2^{2}(x))\prec_A\a_1\a_2(y)\nonumber\\&&-l_{\prec_B}(l_{\ast_B}(\a_2^{2}(x))a)\a_1\a_2(y),\label{313}\\
&&\a_2(x)\prec_A(\rho_B(\b_1\b_2(a)))\a_1(y)-r_{\prec_B}(\rho_A(\a_2(y))\b_1^{2}(a))\a_2(x)\nonumber\\&=&l_{\ast_B}(\b_1\b_2^{2}(x))(a\succ_B\a_1(b))-(l_{\ast_A}(\b_2^{2}(a))x)\prec_B\a_1\a_2(y)\nonumber\\&&-l_{\succ_B}(r_{\ast_A}(x)\b_2^{2}(a))\a_1\a_2(y),\label{314}\\
&&-\a_2(x)\succ_A(\rho(\b_2(a))\b_1^{2}(y))+r_{\succ_B}(\rho(\a_1\a_2(y))\b_1(a))\a_2(x)\nonumber\\&=&\a_1\a_2^{2}(y)\ast_A(r_{\succ_B}(\b_1(a))y)+r_{\ast_B}(l_{\succ_A}(x)\b_1(a))\nonumber\\&&-r_{\succ_B}(\b_1\b_2(a))(\a_2^{2}(y)\ast_A x),\label{315}\\
&&(l_{\cdot_A}(\b_2(a))\a_1(x))\ast_A\a_{2}(y)+l_{\ast_B}(r_{\cdot_A}(\a_1(x))\b_2(a))\a_2(y)\nonumber\\&=&(l_{\ast_B}(\b_2(a))\a_1(y))\succ_A\a_2(x)+l_{\succ_B}(r_{\ast_A}(\a_1(y))\b_2(a))\a_2(x)\nonumber\\&&+l_{\prec_B}(\b_1\b_2(a))(\a_1(x)\ast_A y),\label{316}\\
&&l_{\ast_B}(r_{\cdot_B}(\b_1(a))\a_2(x))\a_2(y)+(r_{\cdot_B}(\b_1(a))\a_2(x))\ast_A\a_2(y)\nonumber\\&=&r_{\succ_B}(\b_2(a))(\a_2(x)\ast_A\a_1(y))+\a_1\a_2(x)\prec_A(l_{\ast_B}(\b_1(a))y)\nonumber\\&&+r_{\prec_B}(r_{\ast_A}(y)\b_1(a))\a_1\a_2(x),\label{317}\\
&&r_{\ast_B}(\b_2(a))(\a_2(x)\cdot_A\a_1(y))=(r_{\ast_B}(\b_1(a))\a_2(x))\succ_A\a_2(y)\nonumber\\&&+l_{\succ_B}(l_{\ast_A}(\a_2(x))\b_1(a))\a_2(y)+\a_1\a_2(x)\prec_A(r_{\ast_B}(a)\a_1(y))\nonumber\\&&+r_{\prec_B}(l_{\ast_A}(\a_2(x))\b_1(a))\a_2(y)+\a_1\a_2(x)\prec_A(r_{\ast_B}(a)\a_1(y))\nonumber\\&&+r_{\prec_B}(l_{\ast_A}(\a_1(y))a)\a_1\a_2(a),\label{318}
\end{eqnarray}
where
$$\begin{array}{lllllll} x \cdot_A y = x \prec_A y + x \succ_A y,~ ~~~~~l_{\cdot_A} = l_{\prec_A} +
l_{\succ_A},~~~~~~r_{\cdot_A} = r_{\prec_A} + r_{\succ_A},
\\
a \cdot_B b = a \prec_B b + a \succ_B b,~~~~~~ l_{\cdot_B} = l_{\prec_B} +
l_{\succ_B}, ~~~~~~r_{\cdot_B} = r_{\prec_B} + r_{\succ_B},
\\
 \{\a_2(x),\a_1(y)\}_A = \a_2(x)
\ast_A \a_1(y) - \a_2(y) \ast_A \a_1(x),\\
\{\b_2(a),\b_1(b)\}_B = \b_2(a)
\ast_B \b_1(b) - \b_2(b) \ast_B \b_1(a),\\ (\rho_A\circ\a_2)\b_1 = (l_{\ast_A}\circ\a_2)\b_1 - (r_{\ast_A}\circ \a_1)\b_2,\\
(\rho_B\circ \b_2)\a_1 = (l_{\ast_B}\circ\b_2)\a_1 - (r_{\ast_B}\circ \b_1)\a_2.
\end{array}$$
Then $(A,B,l_{\prec_A},r_{\prec_A},l_{\succ_A},r_{\succ_A},l_{\ast_A},r_{\ast_A},\beta_1,\beta_2,l_{\prec_B},r_{\prec_B},l_{\succ_B},r_{\succ_B},l_{\ast_B},r_{\ast_B},\alpha_1,\alpha_2)$ is called a matched pair of noncommutative BiHom-pre-Poisson algebras. In this case, there exists a noncommutative BiHom-pre-Poisson algebra
structure on the direct sum $ A \oplus B $ of the underlying vector spaces of
 $ A $ and $ B $ given by
\begin{eqnarray*}
(x + a) \prec ( y + b ) &:=& (x \prec_{A} y + r_{\prec_{B}}(b)x + l_{\prec_{B}}(a)y)
+(l_{\prec_{A}}(x)b + r_{\prec_{A}}(y)a + a \prec_{B} b ), \cr
(x + a) \succ ( y + b ) &:=& (x \succ_{A} y + r_{\succ_{B}}(b)x + l_{\succ_{B}}(a)y)
+ (l_{\succ_{A}}(x)b + r_{\succ_{A}}(y)a + a \succ_{B} b )\cr
(x + a) \ast ( y + b ) &:=& (x \ast_{A} y + r_{\ast_{B}}(b)x + l_{\ast_{B}}(a)y)
+(l_{\ast_{A}}(x)b + r_{\ast_{A}}(y)a + a \ast_{B} b ),\cr
(\a_1\oplus\b_1)(x+a)&:=&\a_1(x)+\b_1(a),\cr
(\a_2\oplus\b_2)(x+a)&:=&\a_2(x)+\b_2(a),
\end{eqnarray*}
for any $ x, y \in A, ~a, b \in B $.
            \end{thm}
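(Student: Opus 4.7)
The plan is to first invoke the two matched-pair theorems already established to get the BiHom-dendriform and BiHom-pre-Lie pieces of the structure on $A\oplus B$, and then to verify the three compatibility axioms (\ref{eq:pre-Poisson 1})--(\ref{eq:pre-Poisson 3}) for the combined structure by a direct calculation. Concretely, since by hypothesis $A\bowtie^{l_{\prec_A},r_{\prec_A},l_{\succ_A},r_{\succ_A},\beta_1,\beta_2}_{l_{\prec_B},r_{\prec_B},l_{\succ_B},r_{\succ_B},\alpha_1,\alpha_2}B$ is a matched pair of BiHom-dendriform algebras and $A\bowtie^{l_{\ast_A},r_{\ast_A},\beta_1,\beta_2}_{l_{\ast_B},r_{\ast_B},\alpha_1,\alpha_2}B$ is a matched pair of BiHom-pre-Lie algebras, the earlier matched-pair theorems yield at once that $(A\oplus B,\prec,\succ,\alpha_1+\beta_1,\alpha_2+\beta_2)$ is a BiHom-dendriform algebra and $(A\oplus B,\ast,\alpha_1+\beta_1,\alpha_2+\beta_2)$ is a BiHom-pre-Lie algebra. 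Only the three pre-Poisson compatibility conditions thus remain to be checked.

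To establish (\ref{eq:pre-Poisson 1}) on $A\oplus B$, I would expand both sides on a triple $(x+a,y+b,z+c)$ using the definitions of $\prec$, $\succ$ and $\ast$, and then collect the resulting terms. Each side naturally breaks into four packages: terms entirely in $A$, terms entirely in $B$, terms in $B$ produced by the $A$-representations acting on $B$, and terms in $A$ produced by the $B$-representations acting on $A$. The purely $A$-valued package cancels thanks to axiom (\ref{eq:pre-Poisson 1}) for $(A,\prec_A,\succ_A,\ast_A,\alpha_1,\alpha_2)$, and the purely $B$-valued one cancels by the same axiom for $(B,\prec_B,\succ_B,\ast_B,\beta_1,\beta_2)$. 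The mixed $A$-valued and $B$-valued packages then regroup, according to which pair of representation maps is being composed, into sums that vanish precisely by the identities (\ref{301})--(\ref{304}) and their symmetric counterparts (\ref{310})--(\ref{313}). The same procedure applied to (\ref{eq:pre-Poisson 2}) and (\ref{eq:pre-Poisson 3}) consumes the remaining identities (\ref{305})--(\ref{309}) and (\ref{314})--(\ref{318}); indeed the latter are obtained from the former by interchanging the roles of $(A,\alpha_i,\ast_A)$ and $(B,\beta_i,\ast_B)$, so that by symmetry only half of the verification is genuinely independent.

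The main obstacle is not conceptual but purely combinatorial: the three pre-Poisson axioms, when evaluated on $A\oplus B$, produce on the order of fifty mixed terms involving all six families of representation maps, and one has to regroup them cleanly so that each of the eighteen hypotheses (\ref{301})--(\ref{318}) is used exactly once. A reliable bookkeeping trick is to tag every term by the pair of spaces in which its two ingredients live and by the compositional pattern of the maps applied; with this tagging the proof reduces to matching each package against exactly one of the hypotheses, following the same pattern as the proofs of Theorem \ref{matched ass} and Theorem \ref{matched Lie}. No new idea beyond careful accounting is required, which is presumably why the author contents himself with saying that the verification is ``easy ( in a similar way as for Proposition \ref{matched Lie} and Proposition \ref{matched ass})''.
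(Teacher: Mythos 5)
Your proposal follows exactly the route the paper takes: the paper's entire proof is the single remark that the result ``is obtained in a similar way as for Theorem \ref{matched ass}'', i.e.\ the two matched-pair hypotheses supply the BiHom-dendriform and BiHom-pre-Lie structures on $A\oplus B$, and the three compatibility axioms (\ref{eq:pre-Poisson 1})--(\ref{eq:pre-Poisson 3}) are checked by direct expansion against the identities (\ref{301})--(\ref{318}). Your sketch is in fact more explicit than the paper about how the mixed terms regroup, but it is the same argument.
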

\begin{proof}
It is obtained in a similar way as for Theorem \ref{matched ass}.

\end{proof}

Let $ A \bowtie^{l_{\prec_{A}}, r_{\prec_{A}},
l_{\succ_{A}}, r_{\succ_{A}},l_{\ast_{A}}, r_{\ast_{A}}, \beta_1,\b_2}_{l_{\prec_{B}}, r_{\prec_{B}}, l_{\succ_{B}},
r_{\succ_{B}},l_{\ast_{B}},
r_{\ast_{B}}, \alpha_1,\a_2} B$ denote this noncommutative BiHom-pre-Poisson algebra.
\begin{cor}
Let $(A, B, l_{\prec_{A}}, r_{\prec_{A}}, l_{\succ_{A}}, r_{\succ_{A}},l_{\ast_{A}}, r_{\ast_{A}}, \beta_1,\b_2,
 l_{\prec_{B}}, r_{\prec_{B}}, l_{\succ_{B}}, r_{\succ_{B}},l_{\ast_{B}}, r_{\ast_{B}}, \alpha_1,\a_2) $ be a matched pair of regular noncommutative BiHom-pre-Poisson algebras $(A, \prec_{A}, \succ_{A},\ast_A, \alpha_1,\a_2)$ and $(B, \prec_{B}, \succ_{B}, \ast_B,\beta_1,\b_2)$.
Then, $(A, B, l_{\prec_{A}} + l_{\succ_{A}}, r_{\prec_{A}} + r_{\succ_{A}},l_{\ast_{A}}-(r_{\ast_{A}}\circ\a_1\a_2^{-1})\b_1^{-1}\b_2,\b_1,\b_2,
l_{\prec_{B}} + l_{\succ_{B}},  r_{\prec_{B}} + r_{\succ_{B}} ,l_{\ast_{B}}-(r_{\ast_{B}}\circ\b_1\b_2^{-1})\a_1^{-1}\a_2,\a_1,\a_2)$ is a matched pair of the associated
noncommutative BiHom-Poisson algebras $(A,\cdot_{A},\{\cdot,\cdot\}_A, \alpha_1,\a_2)$ and $(B, \cdot_{B},\{\cdot,\cdot\}_B,\beta_1,\b_2)$.
\end{cor}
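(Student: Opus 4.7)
The plan is to reduce the statement to an application of Propositions \ref{Matchedd1} and \ref{Matchedd2} together with a verification of the four Poisson-compatibility conditions (\ref{101})--(\ref{104}) for the matched pair of BiHom-Poisson algebras. First, I would observe that the hypotheses already provide two pieces for free: by definition of a matched pair of noncommutative BiHom-pre-Poisson algebras, $A\bowtie^{l_{\ast_A},r_{\ast_A},\beta_1,\beta_2}_{l_{\ast_B},r_{\ast_B},\alpha_1,\alpha_2}B$ is a matched pair of BiHom-pre-Lie algebras, and $A\bowtie^{l_{\prec_A},r_{\prec_A},l_{\succ_A},r_{\succ_A},\beta_1,\beta_2}_{l_{\prec_B},r_{\prec_B},l_{\succ_B},r_{\succ_B},\alpha_1,\alpha_2}B$ is a matched pair of BiHom-dendriform algebras. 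Applying Proposition \ref{Matchedd2} to the latter yields a matched pair of the associated BiHom-associative algebras $(A,\cdot_A,\alpha_1,\alpha_2)$ and $(B,\cdot_B,\beta_1,\beta_2)$ via the actions $l_{\prec_A}+l_{\succ_A}$, $r_{\prec_A}+r_{\succ_A}$, $l_{\prec_B}+l_{\succ_B}$, $r_{\prec_B}+r_{\succ_B}$. Applying Proposition \ref{Matchedd1} to the former yields a matched pair of the associated BiHom-Lie algebras $(A,\{\cdot,\cdot\}_A,\alpha_1,\alpha_2)$ and $(B,\{\cdot,\cdot\}_B,\beta_1,\beta_2)$ via the actions $\rho_A:=l_{\ast_A}-(r_{\ast_A}\circ\alpha_1\alpha_2^{-1})\beta_1^{-1}\beta_2$ and $\rho_B:=l_{\ast_B}-(r_{\ast_B}\circ\beta_1\beta_2^{-1})\alpha_1^{-1}\alpha_2$.

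Next, I would check that the four Leibniz-type compatibility axioms (\ref{101})--(\ref{104}) of a matched pair of noncommutative BiHom-Poisson algebras hold for these combined actions. The strategy is symmetric: equations (\ref{101})--(\ref{102}) (which involve $\rho_A$, $\rho_B$ acting across $\cdot_B$ and $\{\cdot,\cdot\}_B$) should follow from the $A$-side compatibilities (\ref{301})--(\ref{309}) of the pre-Poisson matched pair, while (\ref{103})--(\ref{104}) follow from the mirror $B$-side compatibilities (\ref{310})--(\ref{318}). Concretely, I would expand $\cdot_B=\prec_B+\succ_B$ and $\{\cdot,\cdot\}_B=\ast_B-\beta_1^{-1}\beta_2(\cdot)\ast_B\beta_1\beta_2^{-1}(\cdot)$ on both sides of (\ref{101}) and (\ref{102}), substitute the definition $\rho_A=l_{\ast_A}-(r_{\ast_A}\circ\alpha_1\alpha_2^{-1})\beta_1^{-1}\beta_2$, and then regroup the resulting terms into packets that are handled one by one by (\ref{301})--(\ref{309}); the remaining terms should cancel using the BiHom-multiplicativity relations and the commutation identities between $\alpha_i,\beta_j$ stipulated in Definition \ref{def bim}. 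The $B$-side is completely analogous.

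The main obstacle will be bookkeeping rather than conceptual depth: each of (\ref{101})--(\ref{104}) expands into a sum of six to eight terms, and each of the pre-Poisson matched-pair axioms (\ref{301})--(\ref{318}) supplies only a partial identity among three or four of those terms. The key trick is to pair the ``$\prec_B$-part'' of a Poisson axiom with the appropriate dendriform compatibility, the ``$\succ_B$-part'' with another, and use the $(\rho,l_\ast,r_\ast)$-relation $(\rho\circ\alpha_2)\beta_1=(l_\ast\circ\alpha_2)\beta_1-(r_\ast\circ\alpha_1)\beta_2$ to absorb the cross-terms that arise from $\a_1^{-1}\a_2$-shifted entries in the Lie bracket. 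Once this accounting is organised, the verification is mechanical; I would write it out for (\ref{101}) in detail and merely indicate that (\ref{102})--(\ref{104}) follow in the same way, which is the same level of detail used in the proof of the matched-pair theorem for noncommutative BiHom-Poisson algebras earlier in the paper.
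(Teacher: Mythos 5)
Your proposal follows essentially the same route as the paper: reduce to Propositions \ref{Matchedd1} and \ref{Matchedd2} for the BiHom-Lie and BiHom-associative matched-pair parts, and then observe that the remaining Poisson compatibility conditions (\ref{101})--(\ref{104}) for the combined actions are obtained from the pre-Poisson matched-pair axioms (\ref{301})--(\ref{318}) by expanding $\cdot=\prec+\succ$ and $\rho=l_\ast-(r_\ast\circ\a_1\a_2^{-1})\b_1^{-1}\b_2$. The paper is in fact terser than you are (it simply asserts the equivalence, also citing Corollary \ref{coroo} for the representation statement), so your plan, which spells out the term-by-term bookkeeping, is if anything more detailed than the published argument.
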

\begin{proof} Let $(A, B, l_{\prec_{A}}, r_{\prec_{A}}, l_{\succ_{A}}, r_{\succ_{A}},l_{\ast_{A}}, r_{\ast_{A}}, \beta_1,\b_2,
 l_{\prec_{B}}, r_{\prec_{B}}, l_{\succ_{B}}, r_{\succ_{B}},l_{\ast_{B}}, r_{\ast_{B}}, \alpha_1,\a_2) $ be a matched pair of regular noncommutative BiHom-pre-Poisson algebras $(A, \prec_{A}, \succ_{A},\ast_A, \alpha_1,\a_2)$ and $(B, \prec_{B}, \succ_{B}, \ast_B,\beta_1,\b_2)$. Then by Proposition \ref{Matchedd1} and Proposition \ref{Matchedd2}, $(A, B, l_{\prec_{A}} + l_{\succ_{A}}, r_{\prec_{A}} + r_{\succ_{A}},\b_1,\b_2,
l_{\prec_{B}} + l_{\succ_{B}},  r_{\prec_{B}} + r_{\succ_{B}} ,\a_1,\a_2)$ is a matched pair of the associated
 BiHom-associative algebras $(A,\cdot_{A}, \alpha_1,\a_2)$ and $(B, \cdot_{B},\beta_1,\b_2)$ and $(A, B, l_{\ast_{A}}-(r_{\ast_{A}}\circ\a_1\a_2^{-1})\b_1^{-1}\b_2,\b_1,\b_2,
l_{\ast_{B}}-(r_{\ast_{B}}\circ\b_1\b_2^{-1})\a_1^{-1}\a_2,\a_1,\a_2)$ is a matched pair of the associated
BiHom-Lie algebras $(A,\{\cdot,\cdot\}_A, \alpha_1,\a_2)$ and $(B, \{\cdot,\cdot\}_B,\beta_1,\b_2)$. Besides, in view of Corollary \ref{coroo}, the linear maps $l_{\prec_{A}} + l_{\succ_{A}}, r_{\prec_{A}} + r_{\succ_{A}},l_{\ast_{A}}-(r_{\ast_{A}}\circ\a_1\a_2^{-1})\b_1^{-1}\b_2:A\rightarrow gl(B)$ and  $l_{\prec_{B}} + l_{\succ_{B}},  r_{\prec_{B}} + r_{\succ_{B}},l_{\ast_{B}}-(r_{\ast_{B}}\circ\b_1\b_2^{-1})\a_1^{-1}\a_2:B\rightarrow gl(A)$ are a representations of the underlying noncommutative BiHom-Poisson algebras $(A,\cdot_{A},\{\cdot,\cdot\}_A, \alpha_1,\a_2)$ and $(B, \cdot_{B},\{\cdot,\cdot\}_B,\beta_1,\b_2)$, respectively. Therefore, (\ref{101})-(\ref{102}) are equivalents to (\ref{301})-(\ref{309}) and (\ref{103})-(\ref{104}) are equivalents to (\ref{310})-(\ref{318}).
\end{proof}
\section{$\mathcal{O}$-operators of noncommutative BiHom-Poisson algebras}
In this section we introduce the notions of an $\mathcal{O}$-operator of noncommutative BiHom-
Poisson algebras and we give some related
properties.
\begin{defn}
Let $(A, \cdot, \alpha_1,\a_2)$ be a BiHom-associative algebra and $(l, r, \beta_1,\b_2, V)$ be a bimodule of $A$. Then, a  linear map $ T : V \rightarrow A $
is called an $\mathcal{O}$-operator associated to $(l, r, \beta_1,\b_2, V)$,  if $ T $ satisfies
\begin{eqnarray*}
\alpha_1 T= T\beta_1,~\alpha_2 T= T\beta_2 \mbox{ and } T(u)\cdot T(v) = T(l(T(u))v + r(T(v))u) \mbox { for all } u, v \in V.
\end{eqnarray*}
\end{defn}
\begin{lem}\label{lem silv double}\cite{double}
Let $(A, \cdot, \alpha_1,\a_2)$ be a BiHom-associative algebra, and let $(l, r, \beta_1,\b_2, V) $ be a bimodule.
Let $ T : V \rightarrow A $ be an $ \mathcal{O}$-operator associated to $(l, r, \beta_1,\b_2, V)$. Then, there exists a BiHom-dendriform
algebra structure on $ V $ given by
\begin{eqnarray*}
 u \succ v = l(T(u))v , ~ u \prec v = r(T(v))u
\end{eqnarray*}
for all $u, v \in V$.
\end{lem}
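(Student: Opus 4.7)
The plan is to verify the defining axioms of a BiHom-dendriform algebra for the pair of operations $\succ$ and $\prec$ on $V$, exploiting two ingredients: the bimodule axioms for $(l,r,\beta_1,\beta_2,V)$ over $(A,\cdot,\alpha_1,\alpha_2)$, and the two characteristic properties of an $\mathcal{O}$-operator, namely the commutation $\alpha_i T = T\beta_i$ for $i=1,2$ and the identity $T(u)\cdot T(v)=T(l(T(u))v+r(T(v))u)=T(u\cdot v)$, where $u\cdot v:=u\succ v+u\prec v$.

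First I would dispatch the easy structural axioms. The condition $\beta_1\beta_2=\beta_2\beta_1$ on $V$ is part of the bimodule hypothesis. The BiHom-multiplicativity of $\prec$ and $\succ$, i.e.\ $\beta_i(u\prec v)=\beta_i(u)\prec\beta_i(v)$ and similarly for $\succ$, follows by applying the compatibility identities $\beta_i\circ l(x)=l(\alpha_i(x))\circ\beta_i$ and $\beta_i\circ r(x)=r(\alpha_i(x))\circ\beta_i$ together with $\alpha_i T=T\beta_i$; for instance
\[
\beta_i(u\prec v)=\beta_i(r(T(v))u)=r(\alpha_i(T(v)))\beta_i(u)=r(T(\beta_i(v)))\beta_i(u)=\beta_i(u)\prec\beta_i(v).
\]

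Next I would tackle the three main BiHom-dendriform identities. For the first, $(u\prec v)\prec\beta_2(w)=\beta_1(u)\prec(v\cdot w)$, I unfold the left side to $r(T(\beta_2(w)))r(T(v))u=r(\alpha_2(T(w)))r(T(v))u$, and by the bimodule axiom $r(x\cdot y)\beta_1(v)=r(\alpha_2(y))r(x)v$ this equals $r(T(v)\cdot T(w))\beta_1(u)$. Using the $\mathcal{O}$-operator relation, $T(v)\cdot T(w)=T(v\cdot w)$, so this becomes $r(T(v\cdot w))\beta_1(u)=\beta_1(u)\prec(v\cdot w)$. Symmetrically, for $\beta_1(u)\succ(v\succ w)=(u\cdot v)\succ\beta_2(w)$, I rewrite the left side as $l(\alpha_1(T(u)))l(T(v))w=l(T(u)\cdot T(v))\beta_2(w)=l(T(u\cdot v))\beta_2(w)$, using the bimodule axiom $l(x\cdot y)\beta_2(v)=l(\alpha_1(x))l(y)v$. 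For the mixed identity $(u\succ v)\prec\beta_2(w)=\beta_1(u)\succ(v\prec w)$, I compute $r(\alpha_2(T(w)))l(T(u))v$ and $l(\alpha_1(T(u)))r(T(w))v$ and match them via the bimodule axiom $l(\alpha_1(x))r(y)v=r(\alpha_2(y))l(x)v$ applied with $x=T(u)$, $y=T(w)$.

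The main obstacle, if any, is purely bookkeeping: one must consistently push the $\alpha_i$'s through $T$ via $\alpha_iT=T\beta_i$ at the right moment so that the bimodule axioms apply verbatim, and recognize that the sum $\prec+\succ$ on $V$ corresponds, under $T$, precisely to the associative product $\cdot$ on $A$. No further analytic input is required; once the $\mathcal{O}$-operator identity $T(u)\cdot T(v)=T(u\cdot v)$ is in place, each of the three BiHom-dendriform relations collapses to a single bimodule axiom for $(l,r,\beta_1,\beta_2,V)$.
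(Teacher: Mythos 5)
Your proof is correct: each of the three BiHom-dendriform identities does reduce, after pushing $\alpha_i$ through $T$ via $\alpha_iT=T\beta_i$ and using $T(u)\cdot T(v)=T(u\succ v+u\prec v)$, to exactly one of the three associative bimodule axioms, and the multiplicativity checks are handled correctly by the compatibility relations $\beta_i\circ r(x)=r(\alpha_i(x))\circ\beta_i$ and $\beta_i\circ l(x)=l(\alpha_i(x))\circ\beta_i$. The paper itself gives no proof of this lemma (it is imported by citation from the reference on double constructions of BiHom-Frobenius algebras), so there is nothing to compare against; your direct verification is the standard argument and is complete.
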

Now we recall the definition of an $ \mathcal{O}$-operator on a BiHom-Lie algebra associated to a
given representation, which generalize the Rota-Baxter operator of weight $0$ introduced
in \cite{luimakhlouf}.
\begin{defn}
Let $(A, \{\cdot,\cdot\}, \alpha_1,\a_2)$ be a BiHom-Lie algebra, and let $(\rho, \beta_1,\b_2,V)$ be a representation of $A$. Then, a  linear map $ T : V \rightarrow A $
is called an $ \mathcal{O} $-operator associated to $(\rho, \beta_1,\b_2, V)$,  if $ T $ satisfies
\begin{eqnarray*}
\alpha_1 T= T\beta_1,~\alpha_2 T= T\beta_2 \mbox{ and } \{T(u), T(v)\} = T(\rho(T(u))v - \rho(T(\b_1^{-1}\b_2(v)))\b_1\b_2^{-1}(u)) \mbox { for all } u, v \in V.
\end{eqnarray*}
\end{defn}
\begin{ex}
An $\mathcal{O}$-operator on a BiHom-Lie algebra $(A,\{\cdot,\cdot\},\alpha_1,\a_2)$ with respect to the adjoint representation is called a Rota-Baxter operator on $A$.
\end{ex}
\begin{lem}\label{lemm2}
Let $T:V\rightarrow A$ be an $\mathcal{O}$-operator on a BiHom-Lie algebra $(A,\{\cdot,\cdot\},\alpha_1,\a_2)$ with respect to a representation $(\rho, \beta_1,\b_2,V)$. Define a multiplication $\ast$ on $V$ by
\begin{equation}
    u\ast v=\rho(T(u))v,~~\forall u,v\in V.
\end{equation}
Then $(V,\ast,\alpha_1,\a_2)$ is a BiHom-pre-Lie algebra.
\end{lem}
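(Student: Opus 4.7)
The plan is to realise the BiHom-pre-Lie axioms on $V$ as a pull-back of the BiHom-Lie identity on $A$ through the $\mathcal{O}$-operator $T$. Note first that the statement should be read as $(V,\ast,\beta_1,\beta_2)$ rather than $(V,\ast,\alpha_1,\alpha_2)$, since the twisting maps live on $V$; I proceed under this reading. The idea is simply that $T$ intertwines the would-be pre-Lie product on $V$ with the Lie bracket on $A$, so every axiom on $V$ reduces to one of the axioms already available for $(\rho,\beta_1,\beta_2,V)$.

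First I would dispatch the elementary conditions. Commutativity $\beta_1\beta_2=\beta_2\beta_1$ is part of the BiHom-module structure of $V$. BiHom-multiplicativity $\beta_i(u\ast v)=\beta_i(u)\ast\beta_i(v)$ follows from $T\beta_i=\alpha_i T$ combined with axioms (\ref{repLie2})--(\ref{repLie3}):
\[
\beta_i(u\ast v)=\beta_i(\rho(T(u))v)=\rho(\alpha_i T(u))\beta_i(v)=\rho(T\beta_i(u))\beta_i(v)=\beta_i(u)\ast\beta_i(v).
\]

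Then I would tackle the BiHom-pre-Lie identity (\ref{klkl}) on $V$. Expanding the alternator
\[
(\beta_2(u)\ast\beta_1(v))\ast\beta_2(w)-\beta_1\beta_2(u)\ast(\beta_1(v)\ast w)-(\beta_2(v)\ast\beta_1(u))\ast\beta_2(w)+\beta_1\beta_2(v)\ast(\beta_1(u)\ast w)
\]
using $u\ast v=\rho(T(u))v$, and pushing the $\beta_i$'s through $T$ via $T\beta_i=\alpha_i T$ and (\ref{repLie2})--(\ref{repLie3}), rewrites it as
\[
\rho\bigl(T(\rho(T\beta_2(u))\beta_1(v)-\rho(T\beta_2(v))\beta_1(u))\bigr)\beta_2(w)-\rho(\alpha_1\alpha_2 T(u))\rho(\alpha_1 T(v))w+\rho(\alpha_1\alpha_2 T(v))\rho(\alpha_1 T(u))w.
\]
The crux is to apply the $\mathcal{O}$-operator defining equation to the pair $(\beta_2(u),\beta_1(v))$. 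Since $\beta_1^{-1}\beta_2\beta_1(v)=\beta_2(v)$ and $\beta_1\beta_2^{-1}\beta_2(u)=\beta_1(u)$, this yields
\[
\{\alpha_2 T(u),\alpha_1 T(v)\}=\{T\beta_2(u),T\beta_1(v)\}=T\bigl(\rho(T\beta_2(u))\beta_1(v)-\rho(T\beta_2(v))\beta_1(u)\bigr),
\]
so the first term collapses to $\rho(\{\alpha_2 T(u),\alpha_1 T(v)\})\beta_2(w)$. A single application of the BiHom-Lie representation identity (\ref{repLie1}) with $x=T(u)$ and $y=\alpha_1 T(v)$, together with $\alpha_1\alpha_2=\alpha_2\alpha_1$, then exactly cancels the remaining two summands.

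The main obstacle is purely the bookkeeping of $\alpha$'s and $\beta$'s so that the $\mathcal{O}$-operator formula and (\ref{repLie1}) apply verbatim; in particular, the $\beta_1^{-1}\beta_2$ twist inside the defining identity of $T$ is precisely what is needed to trade $\beta_2(u)\ast\beta_1(v)-\beta_2(v)\ast\beta_1(u)$ for a bracket of the form $\{\alpha_2 T(u),\alpha_1 T(v)\}$. No deeper combinatorial input is required; the lemma is essentially a transport of structure along $T$, in direct analogy with Lemma~\ref{lem silv double} for the associative/dendriform case.
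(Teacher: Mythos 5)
Your argument is correct, and it is worth noting that the paper itself states Lemma~\ref{lemm2} with no proof at all, so there is nothing to compare against: your writeup supplies the missing verification. The two substantive points you make are exactly the right ones. First, the twisting maps on $V$ must be $\beta_1,\beta_2$, not $\alpha_1,\alpha_2$ as printed; this is indeed a typo in the statement, and your reading $(V,\ast,\beta_1,\beta_2)$ is the only one that makes sense. Second, the whole content of the BiHom-pre-Lie identity (\ref{klkl}) is the single cancellation you describe: the choice of arguments $(\beta_2(u),\beta_1(v))$ in the defining relation of $T$ makes the $\beta_1^{-1}\beta_2$ and $\beta_1\beta_2^{-1}$ twists collapse to $\rho(T\beta_2(u))\beta_1(v)-\rho(T\beta_2(v))\beta_1(u)$, so that the outer term becomes $\rho(\{\alpha_2T(u),\alpha_1T(v)\})\beta_2(w)$, and then (\ref{repLie1}) with $x=T(u)$, $y=\alpha_1T(v)$ kills the remaining two summands via $\alpha_1\alpha_2=\alpha_2\alpha_1$. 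The multiplicativity check via $T\beta_i=\alpha_i T$ and (\ref{repLie2})--(\ref{repLie3}) is also as it should be. One small caveat you might state explicitly: the defining identity of an $\mathcal{O}$-operator on a BiHom-Lie algebra already presupposes $\beta_1,\beta_2$ invertible, so regularity of the BiHom-module is an implicit hypothesis of the lemma; your proof uses it only through that identity, which is consistent with the paper's conventions.
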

\begin{defn}
Let $(A,\cdot, \{\cdot,\cdot\}, \alpha_1,\a_2)$ be a noncommutative BiHom-Poisson algebra, and let $(l,r,\rho, \beta_1,\b_2,V)$ be a representation of $A$.
A linear operator $T:V\rightarrow A$ is called an $\mathcal{O}$-operator on $A$ if $T$ is both an $\mathcal{O}$-operator on the BiHom-associative algebra $(A,\cdot,\alpha_1,\a_2)$ and an $\mathcal{O}$-operator on the BiHom-Lie algebra $(A,\{\cdot,\cdot\},\alpha_1,\a_2)$.
\end{defn}
\begin{ex}
An $\mathcal{O}$-operator on a noncommutative BiHom-Poisson algebra $(A,\cdot,\{\cdot,\cdot\},\alpha_1,\a_2)$ with respect the regular representation is called a Rota-Baxter operator on $A$.
\end{ex}
\begin{thm}
Let $(A,\cdot,\{\cdot,\cdot\},\alpha_1,\a_2)$ be a noncommutative BiHom-Poisson algebra and $T:V\rightarrow A$ an $\mathcal{O}$-operator on $A$ with respect to the representation
$(l,r,\rho,\beta_1,\b_2,V)$. Define new operations $\prec,\succ$ and $\ast$ on $V$ by \begin{equation}\label{lara2}u\prec v=l(T(u))v,~u\succ v=r(T(v))u,~u\ast v=\rho(T(u))v.\end{equation}
Then $(V,\prec,\succ,\ast,\alpha_1,\a_2)$ is a noncommutative BiHom-pre-Poisson algebra. Moreover, $T(V)=\{T(v);~v\in V\}\subset A$ is a subalgebra of $A$ and there is an induced noncommutative BiHom-pre-Poisson algebra structure on $T(V)$ given by
\begin{equation}\label{lara1}
T(u)\prec T(v)=T(u\prec v),~~T(u)\succ T(v)=T(u\succ v),~~T(u)\ast T(v)=T(u\ast v),\end{equation}
for all $u,v\in V$.
\end{thm}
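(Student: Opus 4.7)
The strategy is to decompose the statement into its dendriform part, its pre-Lie part, and the three compatibility identities of Definition~\ref{def pre-poiss}, then handle the ``moreover'' part.

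First, I would observe that by hypothesis $T$ is simultaneously an $\mathcal{O}$-operator on the BiHom-associative algebra $(A,\cdot,\a_1,\a_2)$ with respect to the bimodule $(l,r,\b_1,\b_2,V)$ and on the BiHom-Lie algebra $(A,\{\cdot,\cdot\},\a_1,\a_2)$ with respect to $(\rho,\b_1,\b_2,V)$. Applying Lemma~\ref{lem silv double} to the former yields that $(V,\prec,\succ,\a_1,\a_2)$ with $u\prec v=l(T(u))v$ and $u\succ v=r(T(v))u$ is a BiHom-dendriform algebra. Applying Lemma~\ref{lemm2} to the latter yields that $(V,\ast,\a_1,\a_2)$ with $u\ast v=\rho(T(u))v$ is a BiHom-pre-Lie algebra. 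The BiHom-multiplicativity of $\a_1,\a_2$ with respect to $\prec,\succ,\ast$ follows from $\a_i T=T\b_i$ together with the equivariance of $l,r,\rho$ under $\a_i,\b_i$.

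The core of the argument is to verify the three compatibility identities (\ref{eq:pre-Poisson 1})--(\ref{eq:pre-Poisson 3}) on $V$. The plan is to translate each identity, via the definitions of $\prec,\succ,\ast$, into an expression on $V$ of the form $F(T(u),T(v))\cdot w$ for various combinations of $l,r,\rho$ applied to twists of $T(u),T(v)$, and then to appeal to the representation axioms (\ref{isma1.1})--(\ref{isma1.3}) for noncommutative BiHom-Poisson algebras together with the $\mathcal{O}$-operator relation $T(a)\cdot T(b)=T(l(T(a))b+r(T(b))a)$ and $\{T(a),T(b)\}=T(\rho(T(a))b-\rho(T(\b_1^{-1}\b_2(b)))\b_1\b_2^{-1}(a))$. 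For instance, to check (\ref{eq:pre-Poisson 1}) I would rewrite the left-hand side as $r(T(\a_2(z)))(\rho(T(\a_2(u)))\b_1(v) \cdots)$, expand $\{T(\a_2(u)),T(\a_1(v))\}$ using the $\mathcal{O}$-operator identity, and apply (\ref{isma1.1}) to match the right-hand side; identities (\ref{eq:pre-Poisson 2}) and (\ref{eq:pre-Poisson 3}) are handled analogously using (\ref{isma1.2}) and (\ref{isma1.3}) respectively. This is the main obstacle: the calculations are long and require careful bookkeeping of twists by $\a_1,\a_2,\b_1,\b_2$, but each step is forced by exactly one of the three representation axioms, so no new compatibility beyond the ones already assumed is needed.

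For the ``moreover'' part, the $\mathcal{O}$-operator property immediately gives
\[
T(u)\cdot T(v)=T\bigl(l(T(u))v+r(T(v))u\bigr)=T(u\succ v+u\prec v),
\]
\[
\{T(u),T(v)\}=T\bigl(\rho(T(u))v-\rho(T(\b_1^{-1}\b_2(v)))\b_1\b_2^{-1}(u)\bigr)=T\bigl(u\ast v-\b_1^{-1}\b_2(v)\ast\b_1\b_2^{-1}(u)\bigr),
\]
so $T(V)$ is closed under $\cdot$ and $\{\cdot,\cdot\}$, hence a subalgebra of $A$. To transport the BiHom-pre-Poisson structure from $V$ to $T(V)$ via the prescriptions $T(u)\prec T(v):=T(u\prec v)$ etc., I would need to check that these operations are well-defined, i.e.\ if $T(u)=T(u')$ and $T(v)=T(v')$ then $T(u\prec v)=T(u'\prec v')$, and similarly for $\succ$ and $\ast$. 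This follows because $l(T(u))v$, $r(T(v))u$, $\rho(T(u))v$ all depend on $u,v$ only through $T(u)$ and $T(v)$ once one accounts for the action of $T$; more precisely, $T(u\prec v)=T(l(T(u))v)=\a_?(T(u))\cdot T(v)-T(r(T(v))u)$ via the $\mathcal{O}$-operator equation, and an analogous identification for $\succ$ and $\ast$ shows the value $T(u\prec v)$ is determined by $T(u)$ and $T(v)$. Once well-definedness is established, the axioms of a noncommutative BiHom-pre-Poisson algebra on $T(V)$ are inherited from those on $V$ by applying $T$, completing the proof.
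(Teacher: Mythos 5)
Your proposal is correct and follows essentially the same route as the paper: Lemma \ref{lem silv double} and Lemma \ref{lemm2} supply the BiHom-dendriform and BiHom-pre-Lie structures on $V$, and the compatibility identities (\ref{eq:pre-Poisson 1})--(\ref{eq:pre-Poisson 3}) are verified against the representation axioms (\ref{isma1.1})--(\ref{isma1.3}) together with the $\mathcal{O}$-operator relations, exactly as in the paper's computation for (\ref{eq:pre-Poisson 1}). Your treatment of the ``moreover'' part (closure of $T(V)$ and well-definedness of the induced operations) is more explicit than the paper's, which dismisses it as straightforward, but it is the same argument in substance.
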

\begin{proof}
By Lemma \ref{lem silv double} and Lemma \ref{lemm2}, we deduce that $(A,\prec,\succ,\alpha_1,\a_2)$ is a BiHom-dendriform algebra and $(A,\ast,\alpha_1,\a_2)$ is a BiHom-pre-Lie algebra. Now, we  prove  only  the  axiom (\ref{eq:pre-Poisson 1}). The  other being  proved  similarly, for any $x,y,z\in V$ we have
\begin{align*}
    &(\b_2(x)\ast\b_1(y)-\b_2(y)\ast\b_1(x))\prec\b_2(z)\\&-\b_1\b_2(x)\ast(\b_1(y)\prec z)+\b_1\b_2(y)\prec(\b_1(x)\ast z)\\=&(\rho(T(\b_2(x))\b_2(y)-\rho(T(\b_2(y))\b_1(x))\prec\b_2(z)\\&-\rho(T(\b_1\b_2(x)))(\b_1(y)\prec z)+l(T(\b_1\b_2(y)))(\b_1(x)\ast z)\\
    =&l(T(\rho(T(\b_2(x))\b_1(y)\\&-\rho(T(\b_2(y)))\b_1(x))\b_2(z)-\rho(T(\b_1\b_2(x)))l(\b_1(y))z\\&+l(T(\b_1\b_2(y)))\rho(\b_1(x))z\\
    =&l(\{T(\b_2(x),T(\b_1(y))\})\b_2(z)-\rho(T(\b_1\b_2(x)))l(\b_1(y))z\\&+l(T(\b_1\b_2(y)))\rho(\b_1(x))z=0~~~(by~(\ref{isma1.1})).
\end{align*}
Therefore, $(V,\prec,\succ,\ast,\alpha_1,\a_2)$ is a BiHom-pre-Posson algebra.\\The rest is straightforward.
\end{proof}
\begin{cor}
Let $(A,\cdot,\{\cdot,\cdot\},\alpha_1,\a_2)$ be a noncommutative BiHom-Poisson algebra. Then there is a noncommutative BiHom-pre-Poisson algebra structure on $A$ such that its sub-adjacent noncommutative BiHom-Poisson algebra is exactly $(A,\cdot,\{\cdot,\cdot\},\alpha_1,\a_2)$ if and only if there exists an invertible $\mathcal{O}$-operator on $(A,\cdot,\{\cdot,\cdot\},\alpha_1,\a_2).$
\end{cor}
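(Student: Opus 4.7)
The plan is to prove the two implications using the preceding theorem, which produces a noncommutative BiHom-pre-Poisson algebra from an $\mathcal{O}$-operator.

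For $(\Leftarrow)$, assume $T:V\to A$ is an invertible $\mathcal{O}$-operator associated to a representation $(l,r,\rho,\beta_1,\beta_2,V)$. The preceding theorem produces a noncommutative BiHom-pre-Poisson structure on $V$ and, more importantly, an induced one on $T(V)$ via $T(u)\star T(v):=T(u\star v)$ for $\star\in\{\prec,\succ,\ast\}$. Since $T$ is invertible, $T(V)=A$, so $A$ inherits a noncommutative BiHom-pre-Poisson structure. I would then compute its sub-adjacent operations: the $\mathcal{O}$-operator identities $T(u)\cdot T(v)=T(l(T(u))v+r(T(v))u)$ and $\{T(u),T(v)\}=T(\rho(T(u))v-\rho(T\beta_1^{-1}\beta_2(v))\beta_1\beta_2^{-1}(u))$, combined with the intertwining $T\beta_i=\alpha_i T$ (which converts $\beta_1\beta_2^{-1}$ into $\alpha_1\alpha_2^{-1}$ on $A$), force this sub-adjacent to coincide with the given $(A,\cdot,\{\cdot,\cdot\},\alpha_1,\alpha_2)$.

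For $(\Rightarrow)$, assume $(A,\prec,\succ,\ast,\alpha_1,\alpha_2)$ is a noncommutative BiHom-pre-Poisson structure with sub-adjacent $(A,\cdot,\{\cdot,\cdot\},\alpha_1,\alpha_2)$. I would take $V=A$, $\beta_i=\alpha_i$, $T=\mathrm{id}_A$, and build the representation of the Poisson structure directly from the pre-Poisson operations, namely $l(x)=L_\succ(x)$, $r(x)=R_\prec(x)$, $\rho(x)=L_\ast(x)$ (with $L,R$ denoting left/right multiplication, as in the regular-representation example of Section~2). The sub-adjacent formulas $x\cdot y=x\succ y+x\prec y$ and $\{x,y\}=x\ast y-\alpha_1^{-1}\alpha_2(y)\ast\alpha_1\alpha_2^{-1}(x)$ are then, literally, the two $\mathcal{O}$-operator relations for $T=\mathrm{id}_A$, so it suffices to exhibit $(L_\succ,R_\prec,L_\ast,\alpha_1,\alpha_2,A)$ as a representation of $(A,\cdot,\{\cdot,\cdot\},\alpha_1,\alpha_2)$; since $T=\mathrm{id}_A$ is visibly invertible, the conclusion follows.

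The principal obstacle is precisely this representation check. The bimodule axioms for $(L_\succ,R_\prec)$ over $(A,\cdot,\alpha_1,\alpha_2)$ follow at once from the BiHom-dendriform axioms of $(A,\prec,\succ,\alpha_1,\alpha_2)$; the BiHom-Lie representation axioms for $L_\ast$ over $(A,\{\cdot,\cdot\},\alpha_1,\alpha_2)$ amount, after expanding the bracket, to the BiHom-pre-Lie identity (\ref{klkl}). The delicate part is extracting the three mixed compatibility conditions (\ref{isma1.1})--(\ref{isma1.3}) from the three pre-Poisson axioms (\ref{eq:pre-Poisson 1})--(\ref{eq:pre-Poisson 3}); this requires carefully chosen substitutions $x\mapsto\alpha_j^{\pm 1}(x)$ (using the assumed regularity of $\alpha_1,\alpha_2$) to reconcile the BiHom-twists on the two sides, and is the only non-routine point of the argument.
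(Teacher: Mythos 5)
Your proposal is correct and follows essentially the same route as the paper: the backward implication transports the pre-Poisson structure from $V$ to $A=T(V)$ through the invertible $T$ (the paper writes this as $x\prec y=T(l(x)T^{-1}(y))$, etc.), and the forward implication exhibits $\mathrm{id}_A$ as an $\mathcal{O}$-operator with respect to the regular-type representation coming from the pre-Poisson operations. Your choice $\rho=L_{\ast}$ is in fact the careful one: the paper writes ``$ad$'' here, which would return $2\{x,y\}$ rather than $\{x,y\}$ in the $\mathcal{O}$-operator identity, whereas $L_{\ast}$ together with the sub-adjacent bracket formula gives exactly $\{x,y\}$.
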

\begin{proof}
Suppose that there exists an invertible $\mathcal{O}$-operator $T:V\rightarrow A$ associated to the representation $(l,r,\rho,\b_1,\b_2, V)$, then the compatible noncommutative BiHom-pre-Poisson algebra structure on $A$, for all $x,y\in A$ is given by
$$x\prec y=T(l(x)T^{-1}(y)),\;\; x\succ y=T(r(y)T^{-1}(x)),\;\;x\ast y=T(\rho(x)T^{-1}(y))~\forall x,y\in A.$$

Conversely, let $(A,\prec,\succ,\ast,\alpha_1,\a_2)$ be a noncommutative BiHHom-pre-Poisson algebra and $(A,\cdot,\{\cdot,\cdot\},\alpha_1,\a_2)$ the sub-adjacent noncommutative BiHom-Poisson algebra. Then the identity map $id$ is an $\mathcal{O}$-operator on $A$ with respect to the regular representation $(L_\prec,R_\succ, ad,\alpha_1,\a_2,A)$.

\end{proof}
\begin{ex}
  Let $(A,\cdot,\{\cdot,\cdot\},\alpha_1,\a_2)$ be a noncommutative BiHom-Poisson algebra and $R:A\longrightarrow
A$ a Rota-Baxter operator. Define new operations on $A$ by
$$x\prec y=R(x)\cdot y,\quad x\succ y=x\cdot R(y),\quad x\ast y=\{R(x),y\}.$$
Then $(A,\prec,\succ,\ast,\alpha_1,\a_2)$ is a noncommutative BiHom-pre-Poisson algebra and $R$ is a homomorphism from the sub-adjacent noncommutative BiHom-Poisson algebra $(A,\cdot',\{\cdot,\cdot\}',\alpha_1,\a_2)$ to $(A,\cdot,\{\cdot,\cdot\},\alpha_1,\a_2)$, where $x\cdot' y=x\prec y+x\succ y$ and $\{x,y\}'=x\ast y-\a_1^{-1}\a_2(y)\ast \a_1\a_2^{-1}(x)$.\\
\end{ex}

The inverse relation existing between a noncommutative BiHom-pre-Poisson
algebra and noncommutative BiHom-Poisson
algebra, as illustrated by the following diagram:
$$
\xymatrix{ \mbox{BiHom-associative alg+BiHom-Lie alg} \ar[rr] \ar[dd]^{\{R(x),y\}}_{R(x)\cdot y,~~~~x\cdot R(y)} && \mbox{BiHom-Poisson alg} \ar[dd]^{\{R(x),y\}}_{R(x)\cdot y,~~~~x\cdot R(y)}\\
&& \\
\mbox{BiHom-dendriform alg+ BiHom-pre-Lie alg} \ar[rr] && \mbox{BiHom-pre-Poisson alg.}
}
$$

\end{document}